\DeclareMathOperator{\Proj}{Proj}
\DeclareMathOperator{\Syz}{Syz}
\DeclareMathOperator{\rk}{rk}
\DeclareMathOperator{\Rad}{Rad}
\DeclareMathOperator{\height}{ht}
\DeclareMathOperator{\chara}{char}
\DeclareMathOperator{\Spec}{Spec}
\DeclareMathOperator{\Pic}{Pic}
\DeclareMathOperator{\im}{im}
\newcommand{\eps}{\varepsilon}
\begin{document}
\swapnumbers
\theoremstyle{definition}
\newtheorem{Le}{Lemma}[section]
\newtheorem{Def}[Le]{Definition}
\newtheorem*{DefB}{Definition}
\newtheorem{Bem}[Le]{Remark}
\newtheorem{Ko}[Le]{Corollary}
\newtheorem{Theo}[Le]{Theorem}
\newtheorem*{TheoB}{Theorem}
\newtheorem{Bsp}[Le]{Example}
\newtheorem{Be}[Le]{Observation}
\newtheorem{Prop}[Le]{Proposition}
\newtheorem{Sit}[Le]{Situation}
\newtheorem{Que}[Le]{Question}
\newtheorem*{Con}{Conjecture}
\newtheorem{Dis}[Le]{Discussion}
\newtheorem{Prob}[Le]{Problem}
\newtheorem{Konv}[Le]{Convention}
\title{Dagger closure and solid closure in graded dimension two}
\author{Holger Brenner and Axel St\"abler}
\address{Universit\"at Osnabr\"uck, Fachbereich 6: Mathematik/Informatik,
Albrechtstr. 28a,
49069 Osnabr\"uck, Germany}
\email{hbrenner@uni-osnabrueck.de and axel.staebler@uni-osnabrueck.de}

\begin{abstract}
We introduce a graded version of dagger closure and prove that it coincides with solid closure for homogeneous ideals in two dimensional $\mathbb{N}$-graded domains of finite type over a field.
\end{abstract}
\maketitle
\noindent Mathematical Subject Classification (2010): Primary 13A35, 14H60
\section*{Introduction}
Both dagger closure and solid closure were introduced in the hope of providing a characteristic free notion of an ideal closure operation with properties similar to tight closure.

Dagger closure which is given in terms of valuations was first introduced by Hochster and Huneke in \cite{hochsterhunekedagger} for a complete local domain of positive characteristic. In their article they also proved that dagger closure coincides with tight closure in this setting [ibid.,Theorem 3.1]. We also note that Heitmann's \emph{full rank one closure} which he used to prove the direct summand conjecture in mixed characteristic in dimension $3$ (cf.\ \cite{heitmanndirectsummand}) is a variant of dagger closure tailored to mixed characteristics. Despite these striking results very little is known about dagger closure in equal characteristic zero.

Solid closure was introduced in \cite{hochstersolid} by Hochster.
The solid closure of an ideal is defined via local cohomology of so-called forcing algebras and reduction to the complete local domain case. Specifically, let $R$ be a noetherian ring and $f_1, \ldots, f_n, f_0 \in R$, then $f_0$ belongs to the \emph{solid closure} of $(f_1, \ldots, f_n)$, written $f_0 \in (f_1, \ldots, f_n)^\star$, if for every complete local domain $R' = \widehat{R_{\mathfrak{m}}}/ \mathfrak{q}$ (where $\mathfrak{m}$ is a maximal ideal of $R$ and $\mathfrak{q}$ is a minimal prime of $\widehat{R_{\mathfrak{m}}}$) we have $H^d_{\mathfrak{m}}(A') \neq 0$, where $d = \dim R'$ and $A' = R'[T_1, \ldots, T_n]/(f_1 T_1 + \ldots + f_n T_n + f_0)$ is the \emph{forcing algebra} for $(f_1, \ldots, f_n)$ and $f_0$ over $R'$.

In positive characteristic, solid closure coincides with tight closure under mild finiteness conditions (e.\,g.\ $R$ of finite type over a field -- see \cite[Paragraph 8]{hochstersolid}).
Solid closure in equal characteristic zero only yields a good closure operation in dimension less than three. This is due to an example by Roberts (cf.\ \cite{robertscomputation} or \cite[7.22 and 7.23]{hochstersolid}) showing that solid closure need not be trivial in regular rings of dimension $\geq 3$ containing the rationals\footnote{We also note that there is a refinement of solid closure called parasolid closure which has the right properties in equal characteristic zero in all dimensions (see \cite{brennerparasolid}).}.

In this paper we will introduce a graded version of dagger closure (denoted by $I^{\dagger \text{GR}}$ for an ideal $I$) and prove that this closure operation coincides with solid closure for $\mathbb{N}$-graded two dimensional affine domains over a field $k$. 

We prove in \cite[Corollary 2.12]{brennerstaeblerdaggerregular} as a Corollary to \cite[Theorem 3.1]{hochsterhunekedagger} that graded dagger closure agrees with tight closure in all dimensions for an $\mathbb{N}$-graded ring $R$ of finite type over a field $k = R_0$ of positive characteristic. Furthermore, we also prove in \cite[Corollary 3.9]{brennerstaeblerdaggerregular} that graded dagger closure does not admit the aforementioned pathology of solid closure. Namely, we show that graded dagger closure is trivial for polynomial rings over a field. Furthermore, in \cite{staeblerdaggerabelian} the second author proves an inclusion result for certain section rings of abelian varieties. This implies in particular that dagger closure is non-trivial in all dimensions.

In order to prove the equivalence of solid closure and graded dagger closure in dimension two we will use geometric interpretations of these closure operations in terms of vector bundles over the corresponding projective curves. Our main focus will be on characteristic zero although we will work in arbitrary characteristic.

For solid closure this geometric interpretation has been developed by the first author (cf.\ \cite{brennertightproj}, \cite{brennerslope} and \cite{brennertightplus}) which we now recall. 
Let $R$ be a normal standard graded domain of dimension $2$ over an algebraically closed field $k$ and write $Y = \Proj R$. Let $f_1, \ldots, f_n$ denote homogeneous generators of degrees $d_1, \ldots, d_n$ of an $R_+$-primary ideal and fix a homogeneous element $f_0$ of degree $m$. We may identify $H^0(Y, \mathcal{O}_Y(m))$ with $R_m$ (see \cite[Corollaire III.3.5]{SGA2} for a proof).

These data yield the following short exact sequence of locally free sheaves on $Y$ which we call the \emph{presenting sequence} for the twisted syzygy bundle \[\mathcal{S} = \Syz(f_1, \ldots, f_n)(m)\] with forcing data $(f_1, \ldots, f_n)$:
\[\begin{xy} \xymatrix{ 0 \ar[r] & \mathcal{S}\ar[r] & \bigoplus_{i=1}^n \mathcal{O}_Y(m - d_i) \ar[r]^<<<<<{f_1, \ldots, f_n} & \mathcal{O}_Y(m) \ar[r] & 0.}\end{xy}\]

The element $f_0 \in R_m$ defines via the connecting homomorphism the cohomology class \[c = \delta(f_0) \in H^1(Y, \Syz(f_1, \ldots, f_n)(m)).\] This class corresponds to the extension $\mathcal{S}' = \Syz(f_0, f_1, \ldots, f_n)(m)$ (see \cite[Ex.\ III.6.1]{hartshornealgebraic} for this correspondence). The complement $T = \mathbb{P}(\mathcal{S}'^\vee) \setminus \mathbb{P}(\mathcal{S}^\vee)$ is a geometric $\mathcal{S}$-torsor, which also corresponds to $c$.

Also note that in this situation $\mathbb{P}(\mathcal{S}^\vee)$ is a closed subvariety of $\mathbb{P}(\mathcal{S}'^\vee)$ and the (effective) Weil divisor corresponding to $s \in H^0(\mathbb{P}(\mathcal{S}'^\vee), \mathcal{O}(1)_{\mathbb{P}(\mathcal{S}'^\vee)}) = H^0(Y, \mathcal{S}^\vee)$ given by the dualised presenting sequence is precisely $\mathbb{P}(\mathcal{S}^\vee)$ (cf.\ \cite[Proposition 3.4 (iii)]{brennertightproj}) -- we call this the \emph{forcing divisor}.

The element $f_0$ is contained in the solid closure of $(f_1, \ldots, f_n)$ if and only if $T$ is not an affine scheme (cf.\ \cite[Proposition 3.9]{brennertightproj}). Note that $f_0$ is contained in the ideal if and only if $\delta(f_0)$ is zero.

If moreover, $\mathcal{S}$ is strongly semistable then non-affineness of $T$ is equivalent to $\mu(\mathcal{S}) \geq 0$ and $F^{e \ast}(c) \neq 0$ for all Frobenius pullbacks\footnote{If in this article some statement involves the Frobenius morphism without having explicitly fixed positive characteristic then the statement is also true in characteristic zero if one replaces the Frobenius by the identity.}. Or equivalently, $T$ is then non-affine if and only if $\mathcal{S}'^\vee$ is not ample (cf. \cite[Proposition 2.1]{brennertightplus}.

Beginning with section \ref{SectionAlmostZero} we will develop a machinery of almost zero for cohomology classes of vector bundles on curves. This constitutes the geometric interpretation of dagger closure (cf.\ Theorem \ref{almostzerodagger}) which will be crucial for proving that graded dagger closure and solid closure coincide in the two dimensional graded situation. We note that there are more elementary proofs in positive characteristic or if the base curve $Y$ is elliptic. But since this article is already quite long we decided not to include these here but refer the interested reader to \cite{staeblerthesis} instead.

%TODO Das klingt jetzt so als waer das alles relativ fix klar gewesen..
The starting point of \cite{staeblerthesis} was that the first author suggested that dagger closure should somehow be related to the ampleness criterion of Seshadri (\cite[Theorem I.7.1]{hartshorneamplesubvarieties}). Namely, that curves intersecting the forcing divisor in a small way should correspond to syzygies of small order with respect to a valuation in an absolute integral closure of $R$.

As it turns out \cite[Theorem 2.3]{brennerslope}, which is itself a variant of Seshadri's criterion, is more suited to our needs. The heuristic is that if $\varphi: Y' \to Y$ is a finite dominant morphism of smooth curves, where $Y = \Proj R$, and $\mathcal{L}$ a line bundle on $Y'$ such that $\varphi^\ast \mathcal{S}'^\vee \to \mathcal{L}$ is surjective then one should be able to construct syzygies of $(f_0, \ldots, f_n)$ (in some suitable finite graded ring extension $S$ of $R$) from the dualised sequence. Furthermore, if $\deg \mathcal{L}/\deg \varphi$ is small then the degree of the syzygy 
should be small as well. The ring $S$ should be obtained via a section ring constructed from a suitable line bundle on $Y'$. We will need that these syzygies are of the form $(a_0, \ldots, a_n)$ with $a_0 \neq 0$.
 This means that the curve to which the surjection onto $\mathcal{L}$ corresponds is not contained in the forcing divisor -- we will refer to this non-containment in $\mathbb{P}(\mathcal{S}^\vee)$ as the \emph{support condition} in the following.

Proving that this is possible in the strongly semistable case if $f_0 \in I^\star$ will take up most of the article and is one instance where the notion of almost zero plays a crucial role. In fact, a cohomology class $c \in H^1(Y, \mathcal{S})$ is almost zero if and only if there are curves contradicting the ampleness of $\mathcal{S}'^\vee$ in the sense of \cite[Theorem 2.3]{brennerslope} that satisfy the support condition (see Theorem \ref{Lazas}).

We remark that one can prove the inclusion $I^{\dagger \text{GR}} \subseteq I^\star$ in the strongly semistable case without relying on ``almost zero''-techniques -- cf. \cite[Section 4]{staeblerthesis}. But to prove the other inclusion (even in the strongly semistable case) we heavily rely on the machinery of almost zero.

The strategy will be to prove the equivalence of these geometric notions in the case where the syzygy bundle is strongly semistable. This will be accomplished in Theorem \ref{Daggersolidsst}. Then in Section \ref{Reductions} we will use a strong Harder-Narasimhan filtration to extend this to arbitrary syzygy bundles. In Section \ref{AlgebraicReductions} we will relax our conditions on the ring and remove the primary condition on the ideal.

Our main result is Theorem \ref{Mainresult}:

\begin{TheoB}
Let $R$ denote an $\mathbb{N}$-graded two-dimensional domain of finite type over a field $R_0$ and $I$ a homogeneous ideal of $R$. Then the solid closure of $I$ coincides with the graded dagger closure of $I$. 
\end{TheoB}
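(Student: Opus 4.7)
The plan is to reduce the theorem to a purely geometric statement about torsors and syzygy bundles on the projective curve \(Y = \Proj R\) and then to attack that statement via the almost zero machinery developed in the paper. As a first step, using the reductions carried out in Section \ref{AlgebraicReductions}, I may assume that \(R\) is normal and standard graded over an algebraically closed field \(R_0\), and that \(I = (f_1, \ldots, f_n)\) is \(R_+\)-primary with a fixed homogeneous \(f_0 \in R_m\). Both solid closure and graded dagger closure behave well under Veronese embeddings (to pass from an arbitrary \(\mathbb{N}\)-grading to a standard grading), under finite extensions such as normalization, under passage to the algebraic closure of \(R_0\), and under splitting a homogeneous ideal into its primary components, so these reductions are harmless.

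The second step is to translate both closure conditions into statements about the cohomology class \(c = \delta(f_0) \in H^1(Y, \mathcal{S})\), where \(\mathcal{S} = \Syz(f_1, \ldots, f_n)(m)\). By \cite[Proposition 3.9]{brennertightproj}, \(f_0 \in I^\star\) is equivalent to non-affineness of the \(\mathcal{S}\)-torsor \(T\) associated with \(c\). On the dagger side, Theorem \ref{almostzerodagger} asserts that \(f_0\) lies in the graded dagger closure if and only if \(c\) is almost zero in the sense of Section \ref{SectionAlmostZero}. Thus the theorem reduces to the geometric equivalence: \(c\) is almost zero \(\iff\) \(T\) is not affine.

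The heart of the proof, and the main obstacle, is this equivalence in the strongly semistable case (Theorem \ref{Daggersolidsst}). In that case \cite[Proposition 2.1]{brennertightplus} tells us that non-affineness of \(T\) is equivalent to \(\mathcal{S}'^\vee\) failing to be ample, which by \cite[Theorem 2.3]{brennerslope} means there exist finite dominant maps \(\varphi \colon Y' \to Y\) together with line bundle quotients \(\varphi^\ast \mathcal{S}'^\vee \twoheadrightarrow \mathcal{L}\) with \(\deg \mathcal{L}/\deg \varphi\) arbitrarily small. The inclusion \(I^{\dagger \text{GR}} \subseteq I^\star\) admits more direct arguments, but for the reverse inclusion one needs to convert such curves \(Y'\) into syzygies \((a_0, a_1, \ldots, a_n)\) of small valuative order in a suitable section ring \(S\) over \(R\), and one must ensure \(a_0 \neq 0\) by arranging that \(Y'\) is not contained in the forcing divisor \(\mathbb{P}(\mathcal{S}^\vee)\) (the support condition). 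This is precisely where the almost zero machinery becomes essential: Theorem \ref{Lazas} identifies almost vanishing of \(c\) with the existence of such curves satisfying the support condition, and thereby bridges the dagger and solid sides in the semistable case.

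Finally, to pass from the strongly semistable case to arbitrary syzygy bundles, I would invoke the strong Harder-Narasimhan filtration used in Section \ref{Reductions}. After a suitable finite cover of \(Y\) and, in positive characteristic, a high Frobenius pullback, \(\mathcal{S}\) acquires a filtration whose graded pieces are strongly semistable, and one verifies by a diagram chase in cohomology that both the almost zero property of \(c\) and the non-affineness of \(T\) can be detected on the corresponding data for these graded pieces. Combining this reduction with the algebraic reductions of the first step then yields the theorem in full generality.
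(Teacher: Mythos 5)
Your proposal follows the paper's overall architecture accurately for the geometric core: translating dagger closure into almost vanishing of a cohomology class (Theorem \ref{almostzerodagger}), characterising almost zero via curves satisfying the support condition (Theorem \ref{Lazas}), settling the strongly semistable case by \cite[Proposition 2.1]{brennertightplus} and \cite[Theorem 2.3]{brennerslope} (Theorem \ref{Daggersolidsst}), and then passing to general syzygy bundles via a strong Harder--Narasimhan filtration (Theorem \ref{HNFaztight}). However, there is a genuine gap in your very first step. You assert that one may assume $I$ is $R_+$-primary because both closures behave well under ``splitting a homogeneous ideal into its primary components.'' That cannot work: in a two-dimensional graded domain a homogeneous ideal of height one has primary components that are $\mathfrak{p}$-primary for height-one primes $\mathfrak{p}$, not $R_+$-primary, so none of the geometric machinery of Theorem \ref{mainprimary} applies to them; moreover neither solid closure nor dagger closure is known to commute with finite intersections of ideals.

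The paper instead handles the general homogeneous ideal in Theorem \ref{TRed} with two genuinely separate arguments. For the inclusion $I^\star \subseteq I^{\dagger \text{GR}}$ it approximates $I$ from above by the $R_+$-primary ideals $I + R_{\geq l}$, applies Theorem \ref{mainprimary} to these, and then uses Lemma \ref{Ldeg} and a degree comparison to discard the high-degree contributions. For the reverse inclusion it argues by contradiction, reducing solid closure at $R_+$ via Lemma \ref{irrelevantideal} to the vanishing of $H^2_{R_+}(A)$ for the forcing algebra $A$, and then using Lemma \ref{paraclassvansihing} (a paraclass argument) to produce an $R_+$-primary ideal $J \supseteq I$ with $f \notin J^\star$, contradicting $J^\star = J^{\dagger \text{GR}}$. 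You would also need to justify reducing to homogeneous $f_0$ on the solid-closure side, which in characteristic zero requires Proposition \ref{solidhomogeneouscharazero} and Corollary \ref{solidhomogeneous}. A smaller inaccuracy: the reduction to standard grading is achieved by adjoining roots of the algebra generators (Lemma \ref{stdgraded}), not by a Veronese embedding; the latter passes to a subring, which is the wrong direction for the finite-extension arguments (Proposition \ref{hochster2}) the paper relies on.
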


In their article \cite{robertssinghannihilators} Roberts, Singh and Srinivas defined a notion of ``almost zero'' for modules and used this to study the notion of ``almost Cohen-Macaulay''. Their definition of almost zero is equivalent to our definition whenever both are applicable (see Remark \ref{AllesfixAnnulierbar} below). Also note that we will recover one of their results in dimension two in the case of an algebraically closed base field with our definition (cf.\ Proposition \ref{paramaz}). Their proof requires characteristic zero while our proof will be characteristic free.

This article is based on parts of the Ph.\,D.\ thesis of the second author -- see \cite{staeblerthesis}. In particular, the notion of almost zero for vector bundles and the issues around the support condition were largely developed by the second author.

\section{Preliminaries on strongly semistable vector bundles}

We recall that for a vector bundle $\mathcal{S}$ on a smooth projective curve $Y$ the \emph{slope} $\mu(\mathcal{S})$ is given by $\deg \mathcal{S}/\rk \mathcal{S}$. The vector bundle $\mathcal{S}$ is called semistable if for all locally free quotient sheaves $\mathcal{S} \to \mathcal{Q} \to 0$ one has $\mu(\mathcal{S}) \leq \mu(\mathcal{Q})$. This is equivalent to $\mu(\mathcal{E}) \leq \mu(\mathcal{S})$ for all inclusions $0 \to \mathcal{E} \to \mathcal{S}$.

Every locally free sheaf $\mathcal{S}$ has a unique \emph{Harder-Narasimhan filtration}. 
This is a filtration \[0 =\mathcal{S}_0 \subset \mathcal{S}_1 \subset \ldots \subset \mathcal{S}_t = \mathcal{S}\] of subbundles such that the quotients $\mathcal{S}_i / \mathcal{S}_{i-1}$ are locally free and semistable with slopes $\mu_i$ and such that $\mu_1 > \ldots > \mu_t$. One defines $\mu_{\max}(\mathcal{S}) = \mu_1$ and $\mu_{\min}(\mathcal{S}) = \mu_t$ the \emph{maximal slope} and the \emph{minimal slope}. This is the same as \[ \max \{\mu(\mathcal{E}) \, \vert \, 0 \to \mathcal{E} \to \mathcal{F} \text{ is a subsheaf of rank }\geq 1 \}\] and \[\min \{\mu(\mathcal{Q}) \, \vert \, \mathcal{F} \to \mathcal{Q} \to 0 \text{ is a locally free quotient sheaf of rank } \geq 1 \}\] respectively.

For a finite separable morphism $\varphi: Y' \to Y$ of smooth curves $\varphi^\ast \mathcal{S}$ is semistable if and only if $\mathcal{S}$ is semistable. In particular, the pullback of a Harder-Narasimhan filtration of $\mathcal{S}$ along $\varphi$ is the Harder-Narasimhan filtration of $\varphi^\ast \mathcal{S}$. We refer to \cite[I.5]{lepotier} for background on these notions.

Neither of this is true for inseparable morphisms. Hence, in positive characteristic the notion of semistability needs to be refined.
A locally free sheaf $\mathcal{S}$ on $Y$ is called \emph{strongly semistable} if for ever finite morphism $\varphi:Y' \to Y$ the pull back $\varphi^\ast \mathcal{S}$ is semistable\footnote{In particular, semistability and strong semistability coincide in characteristic zero.}. This is equivalent to $\mathcal{S}$ being semistable and that every pull back of $\mathcal{S}$ along the relative Frobenius is again semistable (cf.\ \cite[Proposition 5.1]{miyaokachern}).
By a theorem of Langer (\cite[Theorem 2.7]{langersemistable}) there is for given $\mathcal{S}$ an $e \geq 0$ such that the quotients of the Harder-Narasimhan filtration of $F^{e\ast} \mathcal{S}$ are strongly semistable. We call this a \emph{strong Harder-Narasimhan filtration} of $\mathcal{S}$. We write $\bar{\mu}_{\min}(\mathcal{S}) = \min\{ \mu_{\min}(\varphi^\ast \mathcal{S})/\deg \varphi\ \, \vert \, \varphi $ a finite dominant $k$-linear morphism$\}$ and similarly for $\bar{\mu}_{\max}(\mathcal{S})$. In particular, if the Harder-Narasimhan filtration of $\varphi^\ast \mathcal{S}$ has strongly semistable quotients then $\mu_{\min}(\varphi^\ast \mathcal{S})/ \deg \varphi = \bar{\mu}_{\min}(\mathcal{S})$.

\section{Graded dagger closure}

Recall that the \emph{absolute integral closure} $R^+$ of a domain $R$ is the integral closure of $R$ in an algebraic closure of $Q(R)$. This was first studied by M. Artin in \cite{artinabsintegral}.

In \cite{hochsterhunekedagger}, Hochster and Huneke gave a characterisation of tight closure in complete local domains of characteristic $p >0$ in terms of multipliers of small order with respect to a $\mathbb{Q}$-valued valuation $\nu$ on $R^+$.
In this situation any two valuations on $R$ which are positive on $\mathfrak{m}$ and non-negative on $R$ are equivalent by a theorem of Izumi (see \cite{izumimeasure}).

In the graded setting there is a canoncial choice for a valuation.
Namely, if $R$ is a $\mathbb{Q}$-graded domain the map $\nu: R \setminus \{ 0 \} \to \mathbb{Q}$ sending $f \in R \setminus \{0\}$ to $\deg f_i$, where $f_i$ is the minimal homogeneous component of $f$, induces a valuation on $R$ with values in $\mathbb{Q}$. This valuation will be referred to as the \emph{valuation induced by the grading}.

In order to define a graded version of dagger closure we need a graded version of $R^+$. This is provided by a result of Hochster and Huneke (\cite[Lemma 4.1]{hochsterhunekeinfinitebig}) which states that for an $\mathbb{N}$-graded domain $R$ there is a maximal $\mathbb{Q}_{\geq 0}$-graded subring of $R^+$ which extends the grading of $R$ -- we will denote this ring by $R^{+ \text{GR}}$. It is the limit of all $\mathbb{Q}_{\geq 0}$-graded integral extension domains of $R$.

\begin{Def}
\label{Defgradedagger}
Let $R$ denote an $\mathbb{N}$-graded domain and let $I$ be an ideal of $R$. Let $\nu$ be the valuation on $R^{+ \text{GR}}$ induced by the grading. Then we define the \emph{graded dagger closure} $I^{\dagger \text{GR}}$ of an ideal $I$ as the set of elements $f$ in $R$ such that for all positive $\varepsilon$ there exists an element $a \in R^{+ \text{GR}}$ with $\nu(a) < \varepsilon$ and such that $af$ lies in the extended ideal $IR^{+ \text{GR}}$.
\end{Def}

\begin{Le}
\label{daggerhomogeneous0}
 The graded dagger closure of a homogeneous ideal is again a homogeneous ideal.
\end{Le}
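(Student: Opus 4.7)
The plan is to verify two things: that $I^{\dagger\text{GR}}$ is closed under addition and multiplication by elements of $R$, and that it contains every homogeneous component of each of its elements. The ideal property is essentially formal: given $f,g \in I^{\dagger\text{GR}}$ and $\varepsilon > 0$, pick $a,b \in R^{+\text{GR}}$ with $\nu(a),\nu(b) < \varepsilon/2$ and $af, bg \in IR^{+\text{GR}}$; then $ab(f+g) = b(af)+a(bg) \in IR^{+\text{GR}}$ while $\nu(ab) = \nu(a)+\nu(b) < \varepsilon$. Closure under multiplication by $r \in R$ is even easier since $a(rf)=r(af)$.

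The substantive part is homogeneity. The key preparatory observation is that $I$ is generated by $\mathbb{N}$-homogeneous elements and $R^{+\text{GR}}$ is $\mathbb{Q}_{\geq 0}$-graded, so $IR^{+\text{GR}}$ is a $\mathbb{Q}_{\geq 0}$-homogeneous ideal of $R^{+\text{GR}}$; in particular every homogeneous component of an element of $IR^{+\text{GR}}$ still lies in $IR^{+\text{GR}}$. Now write $f \in I^{\dagger\text{GR}}$ as its finite decomposition $f = f_{n_1} + \dots + f_{n_k}$ into nonzero homogeneous pieces with $n_1 < n_2 < \dots < n_k$. I would induct on $k$; the base case $k=1$ is trivial.

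For the inductive step, given $\varepsilon > 0$, choose $a \in R^{+\text{GR}}$ with $\nu(a) < \varepsilon$ and $af \in IR^{+\text{GR}}$, and write $a = a_{m_1} + \dots + a_{m_\ell}$ with $m_1 = \nu(a) < m_2 < \dots < m_\ell$. In the product $af$, the homogeneous component of degree $m_1+n_1$ receives a contribution from a summand $a_{m_s} f_{n_j}$ only when $m_s + n_j = m_1 + n_1$, and by minimality of $m_1$ and $n_1$ this forces $(s,j) = (1,1)$. Hence the degree $m_1+n_1$ component of $af$ equals $a_{m_1} f_{n_1}$, which therefore lies in $IR^{+\text{GR}}$ by the homogeneity observation above, while $\nu(a_{m_1}) = m_1 < \varepsilon$. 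Since $\varepsilon$ was arbitrary, this shows $f_{n_1} \in I^{\dagger\text{GR}}$. Then $f - f_{n_1} \in I^{\dagger\text{GR}}$ by the ideal property and has only $k-1$ nonzero homogeneous components, so the induction hypothesis applies and each $f_{n_j}$ is in $I^{\dagger\text{GR}}$.

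There is no serious obstacle; the only point requiring any care is the isolation of $a_{m_1} f_{n_1}$ as the unique summand of lowest total degree, which relies on $R^{+\text{GR}}$ being non-negatively graded and on $\nu$ being the minimum-degree valuation rather than any other choice.
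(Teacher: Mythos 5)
Your argument is correct, and it is precisely the direct argument the paper dismisses as ``Standard'': establish the ideal property formally, observe that $IR^{+ \text{GR}}$ is a homogeneous ideal of the $\mathbb{Q}_{\geq 0}$-graded domain $R^{+ \text{GR}}$, and then isolate the lowest-degree component $a_{m_1}f_{n_1}$ of $af$ to bootstrap an induction on the number of homogeneous pieces of $f$. The two points that need care --- that the minimality of $m_1$ and $n_1$ in a non-negatively graded domain forces $a_{m_1}f_{n_1}$ to be the full component of degree $m_1+n_1$, and that this component stays in $IR^{+ \text{GR}}$ by homogeneity of the extended ideal --- are both handled correctly.
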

\begin{proof}
Standard.
\end{proof}

This result also implies that one may choose the multipliers of small order in Definition \ref{Defgradedagger} to be homogeneous. As another immediate consequence we have that an element $f$ is contained in $I^{\dagger \text{GR}}$ if and only if all of its homogeneous components are contained in $I^{\dagger \text{GR}}$. We will therefore restrict our attention to homogeneous elements.

\begin{Le}
\label{Ldeg}
Let $R$ be an $\mathbb{N}$-graded domain and assume that the grading is non-trivial. An element $f$ belongs to $I^{\dagger \text{GR}}$ if and only if there is a sequence $a_n$ of elements in $R^{+ \text{GR}}$  with $\nu(a_n) = 1/n$ such that $a_n f \in IR^{+ \text{GR}}$.
\end{Le}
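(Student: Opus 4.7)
The plan is to observe that one direction is immediate and to reduce the other direction to constructing homogeneous elements of prescribed rational degrees in $R^{+\text{GR}}$.

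First, the ``if'' direction is trivial: given a sequence $a_n$ with $\nu(a_n)=1/n$ and $a_n f\in IR^{+\text{GR}}$, for any $\varepsilon>0$ choose $n$ with $1/n<\varepsilon$ and take $a:=a_n$. For the converse, using Lemma \ref{daggerhomogeneous0} I would first reduce to the case where $f$ is homogeneous; the same lemma (or rather its proof) allows one to choose, for each $\varepsilon>0$, the multiplier $a\in R^{+\text{GR}}$ with $\nu(a)<\varepsilon$ and $af\in IR^{+\text{GR}}$ to be homogeneous as well, so that $\nu(a)=\deg a$.

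The crux is then to upgrade the strict inequality $\deg a<1/n$ to the equality $\deg a_n=1/n$. For this I would prove the auxiliary statement that, since the grading is non-trivial, $R^{+\text{GR}}$ contains a non-zero homogeneous element of every positive rational degree. Pick $x\in R$ homogeneous of positive integer degree $d$. Given $r=p/q\in\mathbb{Q}_{>0}$ with $p,q$ positive integers, a root $y\in R^+$ of $T^{qd}-x^p$ generates a $\mathbb{Q}_{\geq 0}$-graded extension of $R$ with $\deg y = pd/(qd) = r$, and by the maximality of $R^{+\text{GR}}$ among $\mathbb{Q}_{\geq 0}$-graded subrings of $R^+$ (Hochster--Huneke, \cite[Lemma 4.1]{hochsterhunekeinfinitebig}) this element is homogeneous of degree $r$ in $R^{+\text{GR}}$.

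With this in hand I would finish as follows: fix $n\geq 1$, take homogeneous $a\in R^{+\text{GR}}$ with $q:=\deg a<1/n$ and $af\in IR^{+\text{GR}}$, and choose a homogeneous $b\in R^{+\text{GR}}$ with $\deg b=1/n-q>0$. Then $a_n:=ab$ is homogeneous of degree exactly $1/n$, while $a_n f=b(af)\in IR^{+\text{GR}}$. I expect the main (and essentially only) obstacle to be the realisability of arbitrary positive rational degrees in $R^{+\text{GR}}$, which is exactly the place where the non-triviality hypothesis on the grading enters.
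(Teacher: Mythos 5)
Your proof is correct and takes essentially the same route as the paper's: reduce to finding homogeneous elements of the desired positive rational degree in $R^{+\text{GR}}$, and produce them by adjoining roots of a quasi-homogeneous polynomial $T^{N}-x^{M}$ over a homogeneous $x\in R$ of positive degree. The only differences are cosmetic — you make the reduction to homogeneous multipliers and the appeal to the maximality of $R^{+\text{GR}}$ explicit, whereas the paper leaves these implicit.
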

\begin{proof}
Only the only if part is non-trivial. So let $a$ be an element with $\deg(a) \leq \frac{1}{n}$ such that $af \in IR^{+ \text{GR}}$. We only need to find an element $x$ of degree $\frac{1}{n} - \deg(a)$, because we have $xaf \in IR^{+ \text{GR}}$ since this is an ideal.
Let $b$ be a homogeneous element of $\deg b = l > 0$. Fix $m \in \mathbb{N}$ and consider the polynomial $X^{ln} - b^m$, the zeros of this polynomial are homogeneous elements of degree $\frac{m}{n}$. Thus we can construct elements of arbitrary positive degree in $R^{+ \text{GR}}$.
\end{proof}

It is quite obvious from the definition that the tight closure of a homogeneous ideal is again homogeneous. This is not immediate for solid closure in characteristic zero. Thus the following

\begin{Prop}
\label{solidhomogeneouscharazero}
Let $R$ be an $\mathbb{N}$-graded ring, where $R_0$ is a field of characteristic zero which contains all roots of unity, and let $I$ be a homogeneous ideal. Then $I^\star$ is homogeneous.
\end{Prop}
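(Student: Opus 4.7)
The plan is to exploit the $R_0$-algebra automorphisms of $R$ induced by roots of unity in $R_0$. For each positive integer $N$ and each primitive $N$-th root of unity $\zeta \in R_0$ (which exists by hypothesis), the assignment $x \mapsto \zeta^{\deg x} x$ on homogeneous elements extends $R_0$-linearly to a graded $R_0$-algebra automorphism $\sigma_\zeta$ of $R$. The first step is to verify that such $\sigma_\zeta$ preserves solid closure; the second is to use a Vandermonde argument to extract homogeneous components.

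The key technical step is to show that for any ideal $J$ of $R$ and any $R_0$-algebra automorphism $\tau$ of $R$ one has $\tau(J^\star) = \tau(J)^\star$. Chasing the definition, $\tau$ extends via $T_i \mapsto T_i$ to an isomorphism between the forcing algebra for $(g_1, \ldots, g_n; g_0)$ and that for $(\tau(g_1), \ldots, \tau(g_n); \tau(g_0))$; moreover $\tau$ permutes the maximal ideals of $R$ and is compatible with completion and with passage to the quotient by minimal primes. Consequently the non-vanishing condition on $H^d_{\mathfrak{m}}(A')$ is transported between the two situations. Since $I$ is homogeneous one has $\sigma_\zeta(I) = I$, hence $\sigma_\zeta(I^\star) = I^\star$, and this stability passes to all iterates $\sigma_\zeta^k$.

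For the Vandermonde step, let $f \in I^\star$ and write $f = f_{e_1} + \ldots + f_{e_r}$ with $f_{e_j}$ homogeneous of degree $e_j$ and $e_1 < \ldots < e_r$. Choose $N > e_r - e_1$ and a primitive $N$-th root of unity $\zeta \in R_0$. Then
\[
\sigma_\zeta^k(f) \;=\; \sum_{j=1}^r \zeta^{k e_j} f_{e_j} \;\in\; I^\star
\]
for each $k = 0, 1, \ldots, r-1$. The $r \times r$ matrix $(\zeta^{k e_j})$ is a Vandermonde matrix in the pairwise distinct elements $\zeta^{e_1}, \ldots, \zeta^{e_r}$ (distinctness holds because $N$ exceeds all differences $e_j - e_i$) and is therefore invertible over $R_0$. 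Inverting expresses each $f_{e_j}$ as an $R_0$-linear combination of the $\sigma_\zeta^k(f)$, which lies in the ideal $I^\star$.

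The main obstacle is the functoriality step for solid closure under the automorphisms $\sigma_\zeta$: while intuitively clear, it requires careful bookkeeping through each ingredient of the definition (maximal ideals, completions, minimal primes, forcing algebras and local cohomology). Once this is in place, the construction of $\sigma_\zeta$ and the Vandermonde extraction of homogeneous components are routine.
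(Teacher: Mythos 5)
Your proof is correct and uses the same essential mechanism as the paper: the $R_0$-algebra automorphisms $x \mapsto \lambda^{\deg x} x$ for roots of unity $\lambda$ fix $I$ and hence fix $I^\star$ (the paper compresses your careful forcing-algebra bookkeeping into the phrase ``as solid closure is derived in an algebraic way from the ideal''), and then one extracts homogeneous components from the resulting invariance. The one place you genuinely diverge is the extraction step: the paper inducts on the number $r$ of nonzero homogeneous components, choosing a primitive $n$-th root of unity (for $n$ the top degree) so that $\varphi_\lambda(f)-f$ has fewer components, while you instead take $N$ large enough that $\zeta^{e_1},\dots,\zeta^{e_r}$ are pairwise distinct and invert the Vandermonde matrix $(\zeta^{ke_j})$ over the field $R_0$. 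Your Vandermonde route is arguably cleaner because it is non-inductive and avoids the slight bookkeeping in the paper's induction (where $\varphi_\lambda(f)-f$ may in fact lose more than one component, e.g.\ both the degree-$0$ and top-degree parts, and the step from ``the $(\lambda^i-1)f_i$ lie in $I^\star$'' to ``$f_n \in I^\star$'' implicitly re-enters the induction); both arguments are equally elementary and reach the same conclusion.
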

\begin{proof}
Consider the ring automorphisms $\varphi_{\lambda}$ for $\lambda \in R_0^\times$ which map a homogeneous element $x$ to $\lambda^{\deg x} x$. Since $I$ is homogeneous we have $\varphi_{\lambda}(I) = I$ and consequently $I^\star = \varphi_{\lambda}(I^\star)$ as solid closure is derived in an algebraic way from the ideal. Assume that $f \in I^\star$. We have to show that each homogeneous component of $f$ is contained in $I^\star$. We will induct on the number $r$ of nonzero components. The assertion is clear for $r = 0,1$. So let $r > 1$.

Write $f = \sum_{i=0}^n f_i$ where $\deg f_i = i$ and assume that $f_n \neq 0$. Let $\lambda$ be a primitive $n$th root of unity. Then $\varphi_{\lambda}(f) - f \in I^\star$ and this has $r-1$ nonzero homogeneous components. Hence, the $(\lambda -1) f_i$ belong to $I^\star$ and therefore also $f_n$.
\end{proof}

\begin{Bem}
\label{solidhomogeneouscharazeroremark}
Proposition \ref{solidhomogeneouscharazero} applies more generally to any ideal closure operation which is associated to the ideal in an algebraic way, even when extended to graded modules. Moreover, the condition that $R_0$ contains all roots of unity is not very restrictive. Indeed, assume that $\phantom{}^\ast$ is an ideal closure operation, $\varphi: R \to S$ a ring homomorphism, $I \subseteq R$ an ideal and $f \in (\varphi(I)S)^\ast$. Suppose furthermore that  $\phantom{}^\ast$ satisfies one of the  following properties:
\begin{enumerate}[(i)]
 \item{If $R \subseteq S$ is an integral ring extension then $(IS)^\ast \cap R \subseteq I^\ast$.}
 \item{If $S$ is faithfully flat over $R$ then $f \in I^\ast$.}
\end{enumerate}
Then we can drop the assumption that $R_0$ contains all roots of unity. Indeed, if $k$ is an algebraic extension which contains all roots of unity of $R_0$ then $R_0 \subseteq k$ is integral and $R \otimes_{R_0} k$ is faithfully flat. Also note that solid closure satisfies this second condition (see \cite[Theorem 5.9]{hochstersolid}).
\end{Bem}

For ease of reference we thus have

\begin{Ko}
\label{solidhomogeneous}
Let $R$ be an $\mathbb{N}$-graded domain of finite type over a field $R_0$ and let $I$ be a homogeneous ideal. Then $I^\star$ is again homogeneous.
\end{Ko}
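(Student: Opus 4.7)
The plan is to reduce to Proposition \ref{solidhomogeneouscharazero} by splitting into cases according to the characteristic of $R_0$.

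If $\chara R_0 > 0$, then since $R$ is of finite type over the field $R_0$, the introduction recalls that solid closure coincides with tight closure (cf.\ \cite[Paragraph 8]{hochstersolid}); and tight closure of a homogeneous ideal is manifestly homogeneous, its defining condition involving only a single test element together with Frobenius powers, both of which respect the grading. This disposes of the positive characteristic case at once.

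If $\chara R_0 = 0$, let $k$ be an algebraic extension of $R_0$ containing all roots of unity, and set $S := R \otimes_{R_0} k$. Then $S$ inherits an $\mathbb{N}$-grading with $S_0 = k$, and the map $R \to S$ is both integral and faithfully flat, being obtained by base change from the field extension $R_0 \hookrightarrow k$. Take $f \in I^\star$; by persistence of solid closure along $R \to S$ one has $f \in (IS)^\star$ inside $S$. Proposition \ref{solidhomogeneouscharazero} applied to $S$ and the homogeneous ideal $IS$ (note that $S_0 = k$ has characteristic zero and contains all roots of unity, while the proposition does not require $S$ to be a domain) ensures that $(IS)^\star$ is homogeneous, so each homogeneous component $f_i$ of $f$ lies in $(IS)^\star$. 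Finally, property (ii) of Remark \ref{solidhomogeneouscharazeroremark}---valid for solid closure by \cite[Theorem 5.9]{hochstersolid}, as the remark explicitly records---delivers $f_i \in I^\star$ for every $i$, as desired.

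The only substantive step is the characteristic zero reduction, which leans on persistence of solid closure along $R \to S$ together with faithfully flat descent; both are standard properties of solid closure invoked through the ambient references. In this sense the corollary is essentially a formal packaging of Proposition \ref{solidhomogeneouscharazero} and the discussion in Remark \ref{solidhomogeneouscharazeroremark}, with the positive characteristic case settled separately through the identification of solid and tight closure.
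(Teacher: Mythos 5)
Your proof is correct and follows essentially the same route as the paper: positive characteristic is handled by identifying solid and tight closure, and characteristic zero is reduced to Proposition \ref{solidhomogeneouscharazero} via the base change $R \to R \otimes_{R_0} k$ and faithfully flat descent, exactly as sketched in Remark \ref{solidhomogeneouscharazeroremark}. The paper's own proof is just a terse pointer to those two results; you have merely unpacked the same argument.
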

\begin{proof}
If the characteristic of $R$ is zero then the result is due to Proposition \ref{solidhomogeneouscharazero} and Remark \ref{solidhomogeneouscharazeroremark}. If  $\chara R > 0$ then $I^\star = I^\ast$ by virtue of \cite[Theorems 8.5 and 8.6]{hochstersolid}.
\end{proof}

\begin{Def}
\label{moddag}
Let $R$ be an $\mathbb{N}$-graded domain and let $N \subseteq M$ be an inclusion of $\mathbb{Z}$-graded $R$-modules. Then the \emph{graded dagger closure} of $N$ in $M$ denoted by $N_M^{\dagger \text{GR}}$ is the set of all elements $m \in M$ such that for all $\eps > 0$ there exists $a \in R^{+ \text{GR}}$ with $\nu(a) < \eps$ and $m \otimes a_n \in \im(N \otimes R^{+ \text{GR}} \to M \otimes R^{+ \text{GR}})$. As usual, $\nu$ is the valuation induced by the grading.
\end{Def}

\begin{Le}
\label{daggermodulistdagger}
Let $R$ be an $\mathbb{N}$-graded domain and $I \subseteq R$ an ideal.
Then $I_R^{\dagger \text{GR}} = I^{\dagger \text{GR}}$, where the latter denotes the usual graded dagger closure as in Definition \ref{Defgradedagger}.
\end{Le}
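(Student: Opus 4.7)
The plan is to show that the two definitions agree by simply unraveling the tensor product description into an ideal-membership condition. The key point is that for the module inclusion $I \hookrightarrow R$, the image of the natural map
\[
I \otimes_R R^{+\mathrm{GR}} \longrightarrow R \otimes_R R^{+\mathrm{GR}}
\]
is, after identifying $R \otimes_R R^{+\mathrm{GR}}$ with $R^{+\mathrm{GR}}$ via the canonical isomorphism, precisely the extended ideal $I R^{+\mathrm{GR}}$. Under this identification an element of the form $m \otimes a$ (with $m \in R$ and $a \in R^{+\mathrm{GR}}$) corresponds to the product $ma \in R^{+\mathrm{GR}}$.

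From here the argument is direct. Let $f \in R$. Then $f \in I_R^{\dagger\mathrm{GR}}$ if and only if for every $\varepsilon > 0$ there exists $a \in R^{+\mathrm{GR}}$ with $\nu(a) < \varepsilon$ such that $f \otimes a$ lies in $\mathrm{im}(I \otimes_R R^{+\mathrm{GR}} \to R \otimes_R R^{+\mathrm{GR}})$. By the identification above, this is equivalent to $fa \in I R^{+\mathrm{GR}}$, which is precisely the condition defining $I^{\dagger\mathrm{GR}}$ in Definition \ref{Defgradedagger}.

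There is essentially no obstacle here: the content is purely the identification of the image of $I \otimes R^{+\mathrm{GR}} \to R^{+\mathrm{GR}}$ with $IR^{+\mathrm{GR}}$, which is the defining property of the extended ideal. The only thing worth a line is the remark that the tensor product is over $R$ (so that $R \otimes_R R^{+\mathrm{GR}} \cong R^{+\mathrm{GR}}$), which is implicit in Definition \ref{moddag}. Thus the proof reduces to one or two lines of unraveling.
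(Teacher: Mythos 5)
Your proof is correct and essentially the same as the paper's: both come down to identifying the image of $I \otimes_R R^{+\mathrm{GR}} \to R \otimes_R R^{+\mathrm{GR}} \cong R^{+\mathrm{GR}}$ with the extended ideal $IR^{+\mathrm{GR}}$, which the paper spells out via the right-exact sequence obtained from $0 \to I \to R \to R/I \to 0$ while you state it directly.
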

\begin{proof}
Consider the exact sequence $0 \to I \to R \to R/I \to 0$ and tensor with $R^{+ \text{GR}}$ to obtain a right exact sequence \[I \otimes R^{+ \text{GR}} \to R^{+ \text{GR}} \to R^{+ \text{GR}}/IR^{+ \text{GR}} \to 0.\] Assume $f \in I^{\dagger \text{GR}}$. Then 
for every $\eps >0$ we find an element $a \in R^{+ \text{GR}}$ with $\nu(a) < \eps$ sucht that $af \in IR^{+ \text{GR}}$. Since the canonical map $I \otimes R^{+  \text{GR}} \to IR^{+ \text{GR}}$ is surjective we have preimages inside $I \otimes R^{+ \text{GR}}$.

For the other inclusion we have that the $af \in R^{+ \text{GR}}$ have preimages in $I \otimes R^{+ \text{GR}}$. That is, they map to zero in $R^{+ \text{GR}}/IR^{+ \text{GR}}$. Hence, $af \in IR^{+ \text{GR}}$.
%simply look at the images of the sequence $f \otimes a_n$ under the canonical map.
\end{proof}

We note that dagger closure for modules is also considered in \cite{asgbhattaremakrsonbcm}.

\begin{Prop}
\label{Pwhatever}
In the situation of Definition \ref{moddag} consider the short exact sequence $0 \xrightarrow{} N \xrightarrow{} M \xrightarrow{p} M/N \xrightarrow{} 0$ of $\mathbb{Z}$-graded $R$-modules. Then $m \in N_M^{\dagger \text{GR}}$ if and only if $p(m) \in 0_{M/N}^{\dagger \text{GR}}$.
\end{Prop}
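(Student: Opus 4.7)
The plan is to reduce the equivalence to the right-exactness of $- \otimes_R R^{+\text{GR}}$. Tensoring the short exact sequence $0 \to N \to M \xrightarrow{p} M/N \to 0$ with $R^{+\text{GR}}$ yields the right exact sequence
\[ N \otimes_R R^{+\text{GR}} \xrightarrow{\iota} M \otimes_R R^{+\text{GR}} \xrightarrow{p \otimes \mathrm{id}} (M/N) \otimes_R R^{+\text{GR}} \to 0. \]
The crucial point is exactness at the middle term, which tells us that for any element of the form $m \otimes a \in M \otimes_R R^{+\text{GR}}$, the condition $m \otimes a \in \im \iota$ is equivalent to $(p\otimes \mathrm{id})(m \otimes a) = p(m) \otimes a = 0$ in $(M/N) \otimes_R R^{+\text{GR}}$.

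First I would unwind Definition \ref{moddag} for both sides. The statement ``$m \in N_M^{\dagger \text{GR}}$'' reads: for every $\eps > 0$ there is $a \in R^{+\text{GR}}$ with $\nu(a) < \eps$ and $m \otimes a \in \im(N \otimes R^{+\text{GR}} \to M \otimes R^{+\text{GR}})$. On the other hand, ``$p(m) \in 0_{M/N}^{\dagger \text{GR}}$'' reads: for every $\eps > 0$ there is $a \in R^{+\text{GR}}$ with $\nu(a) < \eps$ and $p(m) \otimes a \in \im(0 \otimes R^{+\text{GR}} \to (M/N) \otimes R^{+\text{GR}})$. But the image of the zero module is the zero submodule, so the latter condition is just $p(m) \otimes a = 0$ in $(M/N) \otimes R^{+\text{GR}}$.

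Now I would combine these two reformulations with the exactness observation: the multiplier $a$ witnessing the first condition is the same multiplier witnessing the second, because $m \otimes a$ lies in $\im \iota$ if and only if $p(m) \otimes a = 0$. Hence the two sets of admissible multipliers coincide, which establishes the biconditional. There is essentially no obstacle here; the only mild subtlety is that one must use that the image $\im(N \otimes R^{+\text{GR}} \to M \otimes R^{+\text{GR}})$ referenced in Definition \ref{moddag} is literally the kernel of $p \otimes \mathrm{id}$, which is precisely the content of right-exactness and requires no injectivity of $\iota$. Thus the proof is essentially a one-line diagram chase, and I would present it as such.
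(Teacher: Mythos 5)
Your argument is correct and is essentially the paper's own proof: the paper dispatches this proposition by pointing to the proof of Lemma \ref{daggermodulistdagger}, which is precisely the right-exactness argument you give, identifying $\im(N \otimes R^{+\text{GR}} \to M \otimes R^{+\text{GR}})$ with $\ker(p \otimes \mathrm{id})$ so that the same multiplier $a$ witnesses both conditions.
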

\begin{proof}
Follows similarly to Lemma \ref{daggermodulistdagger}.
\end{proof}

Proposition \ref{Pwhatever} motivates the following

\begin{Def}
\label{AlmostZeroGradedModules}
Let $R$ be an $\mathbb{N}$-graded domain and $M$ a graded $R$-module. We say that $m \in M$ is \emph{almost zero} if for every $\eps > 0$ the element $m \otimes 1 \in M \otimes R^{+ \text{GR}}$ is annihilated by an element $a_m \in R^{+ \text{GR}}$ with $\nu(a_m) < \eps$ (equivalently $m \in 0^{\dagger \text{GR}}_M$).
\end{Def}

\begin{Prop}
\label{NetallesAz}
Let $R$ be an $\mathbb{N}$-graded domain finitely generated over a field $R_0$ and $M \neq \{0\}$ a finitely generated $\mathbb{Z}$-graded $R$-module. Then not every element of $M$ is almost zero in the sense of Definition \ref{AlmostZeroGradedModules}.
\end{Prop}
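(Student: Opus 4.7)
The plan is to reduce the statement to a cyclic graded quotient of $M$ and then to exploit the valuation $\nu$ on $R^{+ \text{GR}}$ induced by the grading. Since $M$ is nonzero and finitely generated, fix a minimal homogeneous generating set $m_1, \ldots, m_n$. By the graded Nakayama lemma $\bar m_1 \neq 0$ in the cyclic quotient $M/\langle m_2, \ldots, m_n\rangle \cong R/\mathfrak{a}(-d)$, where $d = \deg m_1$ and $\mathfrak{a} = \operatorname{Ann}_R(\bar m_1)$ is a proper homogeneous ideal. Passing to a homogeneous minimal prime $\mathfrak{p}$ of $\mathfrak{a}$ yields a graded surjection $p \colon M \twoheadrightarrow R/\mathfrak{p}(-d)$ sending $m_1 \mapsto \bar 1$.

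I will show $m_1$ is not almost zero in $M$. If it were, then for every $\eps > 0$ some $a \in R^{+ \text{GR}}$ with $\nu(a) < \eps$ would satisfy $m_1 \otimes a = 0$ in $M \otimes_R R^{+ \text{GR}}$. Applying $p \otimes \operatorname{id}$ gives $\bar 1 \otimes a = 0$ in $R/\mathfrak{p}(-d) \otimes_R R^{+ \text{GR}} \cong R^{+ \text{GR}}/\mathfrak{p} R^{+ \text{GR}}$, i.e.\ $a \in \mathfrak{p} R^{+ \text{GR}}$. It therefore suffices to establish a positive lower bound $\nu(a) \geq d_{\mathfrak{p}} > 0$ for every nonzero element $a$ of $\mathfrak{p} R^{+ \text{GR}}$, so that any $\eps < d_{\mathfrak{p}}$ yields the required contradiction.

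To prove this bound, observe that since $R_0$ is a field and $\mathfrak{p} \subsetneq R$ is homogeneous, $\mathfrak{p} \cap R_0 = (0)$. Hence every nonzero homogeneous element of $\mathfrak{p}$ has strictly positive degree; set $d_\mathfrak{p}$ equal to the minimum such degree (and $d_\mathfrak{p} = +\infty$ if $\mathfrak{p} = 0$). Any $a \in \mathfrak{p} R^{+ \text{GR}}$ is a finite sum $\sum_j p_j b_j$ with $p_j \in \mathfrak{p}$ and $b_j \in R^{+ \text{GR}}$, and since $\mathfrak{p}$ is generated by homogeneous elements and every element of $R^{+ \text{GR}}$ admits a homogeneous decomposition, we may take $p_j$ and $b_j$ homogeneous. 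Each summand then has degree $\deg p_j + \deg b_j \geq d_\mathfrak{p} + 0 = d_\mathfrak{p}$, because $R^{+ \text{GR}}$ is $\mathbb{Q}_{\geq 0}$-graded, and the non-archimedean property $\nu\bigl(\sum_j x_j\bigr) \geq \min_j \nu(x_j)$ of the valuation yields $\nu(a) \geq d_\mathfrak{p}$.

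The main obstacle is precisely this valuation estimate: it depends on the combination of the homogeneity of $\mathfrak{p}$ inside $R$, the non-negativity of the $\mathbb{Q}$-grading on $R^{+ \text{GR}}$, and the non-archimedean property of $\nu$. If any one of these three ingredients failed one could manufacture elements of $\mathfrak{p} R^{+ \text{GR}}$ of arbitrarily small valuation and the reduction step would collapse; once the estimate is in place, the surjection $p$ combined with right-exactness of the tensor product transports the contradiction back to $M$.
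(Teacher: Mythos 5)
Your proof is correct, but it takes a genuinely different and more elementary route than the paper's. The paper reduces by induction on the number of minimal generators to a cyclic module $R/I$ and then imports comparison results: in characteristic $p$ the chain $I^{\dagger\text{GR}} \subseteq I^\ast \subseteq \Rad(I)$ from tight closure theory, and in characteristic zero a corollary from a companion paper asserting $J^{\dagger\text{GR}} \subseteq \Rad(J)$ for homogeneous ideals $J$ of height $\dim R$. You instead surject $M$ onto a graded cyclic quotient $R/\mathfrak{p}(-d)$ carrying $m_1 \mapsto \bar 1$ and then prove a direct valuation bound: any nonzero element of $\mathfrak{p}R^{+\text{GR}}$ has $\nu \geq 1$, because $\mathfrak{p}$ is proper and homogeneous with $R_0$ a field (so $\mathfrak{p}_0 = 0$ and nonzero homogeneous elements of $\mathfrak{p}$ have degree $\geq 1$), while $R^{+\text{GR}}$ is $\mathbb{Q}_{\geq 0}$-graded and $\nu$ is non-archimedean. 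That lower bound immediately rules out almost-zero annihilators with $\eps < 1$. This is self-contained, works uniformly in both characteristics, avoids the cited external corollary entirely, and in fact reproves the prime case of that corollary along the way. One small simplification is available: the passage from $\mathfrak{a} = \operatorname{Ann}_R(\bar m_1)$ to a minimal prime $\mathfrak{p}$ is not needed, since $\mathfrak{a}$ is already a proper homogeneous ideal with $\mathfrak{a}_0 = 0$ and the identical degree estimate applies to $\mathfrak{a}R^{+\text{GR}}$; but taking the prime is harmless.
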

\begin{proof}
If $M$ is generated by a single element then we have a presentation $0 \to I \to R \to M \to 0$ and we may assume that $M = R/I$, where $I$ is homogeneous. If $M$ is free so that $I = 0$ then $M \otimes_R R^{+ \text{GR}} = R^{+ \text{GR}}$ and clearly this module is not almost zero. 

So assume that $I \neq 0$. If the characteristic of $R$ is positive then we have that $I^{\dagger \text{GR}} \subseteq I^\ast \subseteq \Rad(I)$ and hence $R/I$ is not almost zero by Proposition \ref{Pwhatever}. If $R$ is of characteristic zero then by \cite[Corollary 4.6]{brennerstaeblerdaggerregular} any homogeneous ideal $J$ of height $\dim R$ is contained in $\Rad(J)$. It follows that if $I \subseteq J$ and every element of $R/I$ were almost zero then also every element of $R/J$. 

Assume that $M$ is minimally generated by homogeneous elements $m_1, \ldots, m_n$, $n \geq 2$ and assume that every element of $M$ is almost zero. Consider the short exact sequence $0 \to N \to M \to M/N \to 0$, where $N$ denotes the submodule of $M$ generated by $m_1, \ldots, m_{n-1}$. Then since every element of $M$ is almost zero so is every element of $M/N$. But $M/N$ is generated by the class of $m_n$. This contradiction proves the proposition.
 \end{proof} %\ref{gradeddaggerinsolidgeneral}

\section{Some useful lemmata for section rings}
In this section we collect some lemmata that we shall need frequently in the following sections.

%man kann das vermutlich auch allgemeiner fuer einen endlcihen Morphismus von Ringen formulieren?! zumindest der anfangsteil geht genauso durch damit.
\begin{Le}
\label{Pullback}
Let $R$ be an $\mathbb{N}$-graded domain such that $\Proj R$ is covered by open sets $D_+(f)$, where $f \in R_1$, and let $R \subseteq S$ be a finite $\mathbb{Q}$-graded extension of domains. Consider $S$ as an $\mathbb{N}$-graded domain by multiplying with a common denominator $n$. Then the inclusion induces a morphism $\varphi: \Proj S \to \Proj R$ and we have $\varphi^\ast \mathcal{O}_{\Proj R}(1) = \mathcal{O}_{\Proj S}(n)$.
\end{Le}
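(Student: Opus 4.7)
The plan is to establish the identification locally on the cover of $\Proj R$ by opens $D_+(f)$ with $f \in R_1$ and then verify that the local isomorphisms are compatible on overlaps.

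First I would check that $\varphi$ is genuinely defined on all of $\Proj S$. Since $S$ is a finite, hence integral, extension of $R$, every homogeneous element $x \in S_+$ (in the rescaled $\mathbb{N}$-grading) satisfies a monic integral relation over $R$ whose subleading coefficients, by homogeneity, must lie in $R_+$. Consequently some power of $x$ lies in $R_+ S$, so $V_+(R_+ S) = \emptyset$ in $\Proj S$, and the inclusion $R \subseteq S$ induces an everywhere-defined morphism $\varphi \colon \Proj S \to \Proj R$.

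Next I would restrict to a chart. For $f \in R_1$ the preimage $\varphi^{-1}(D_+(f))$ is the affine open $D_+(f) \subseteq \Proj S$, where now $f$ has degree $n$ in the rescaled grading. Sections of $\mathcal{O}_{\Proj R}(1)$ on $D_+(f)$ are given by the degree-one part $(R_f)_1$, which is a free $(R_f)_0$-module of rank one with generator $f$, because multiplication by $f$ is an isomorphism $(R_f)_0 \xrightarrow{\sim} (R_f)_1$. Pulling back along the ring map $(R_f)_0 \hookrightarrow (S_f)_0$ therefore produces the free $(S_f)_0$-module $f \cdot (S_f)_0 \subseteq S_f$. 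Since $f \in S_n$, multiplication by $f$ is an isomorphism $(S_f)_0 \xrightarrow{\sim} (S_f)_n$, so this module coincides with $(S_f)_n$, which is exactly the module of sections of $\mathcal{O}_{\Proj S}(n)$ on $D_+(f)$.

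For gluing I would observe that on an overlap $D_+(fg)$ with $f, g \in R_1$, the transition function of $\mathcal{O}_{\Proj R}(1)$ between the frames $f$ and $g$ is the unit $g/f \in (R_{fg})_0$, and its image in $(S_{fg})_0$ is the same expression $g/f$, which is exactly the transition function between these frames for $\mathcal{O}_{\Proj S}(n)$. The local identifications are thus compatible and assemble into a global isomorphism $\varphi^\ast \mathcal{O}_{\Proj R}(1) \cong \mathcal{O}_{\Proj S}(n)$. There is no serious obstacle; the only point that requires attention is keeping careful track of degrees under the rescaling, which is precisely what produces the twist by $n$ rather than by $1$.
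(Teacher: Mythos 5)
Your proof is correct and follows essentially the same line as the paper's: show the morphism is everywhere defined by producing a homogeneous monic relation with subleading coefficients in $R_+$, then trivialize $\mathcal{O}_{\Proj R}(1)$ on the cover $D_+(f)$, $f \in R_1$, via multiplication by $f$, pull back, and check compatibility on overlaps $D_+(fg)$. The only cosmetic difference is that you phrase the gluing step in terms of transition functions $g/f$, whereas the paper phrases it as an isomorphism of tensor products $R_{(fg)}(1)\otimes_{R_{(fg)}} S_{(fg)} \cong S(n)_{(fg)}$; these are the same verification.
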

\begin{proof}
For the construction of the induced morphism confer \cite[2.8.1]{EGAII}. To see that the morphism is defined on all of $\Proj S$ we have to prove that $V_+(\varphi(R_+)) = \varnothing$. Write $S = R[x_1, \ldots, x_m]$. Each $x = x_j$ satisfies an equation $x^d = \sum_{i=0}^{d-1} a_i x^i$, where the $a_i \in R$ are homogeneous, and the degree of $x$ in $S$ is $\frac{\deg a_i}{d-i}$. If $\deg x=0$ then $x \notin S_+$. We have $\deg x > 0$ if and only if $\deg a_i > 0$ for each $i$. Hence $x^d \in S R_+$. %insb liegt also x im Verschwindungsding
This shows that $\Rad(S R_+) = S_+$ and hence $V_+(\varphi(R_+)) = \varnothing$. %it actually is all of S_+, hence empty

We can cover $\Proj R$ by open subsets of the form $D_+(f)$, where $f$ has degree $1$. Hence, $\Proj S$ is covered by $\varphi^{-1}(D_+(f)) = D_+(i(f))$ where $i: R \to S$ is the inclusion (cf.\ \cite[2.8.1]{EGAII}). The covering property also implies that $\mathcal{O}_{\Proj R}(1)$ is invertible (cf.\ \cite[proof of Proposition II.5.12 (a)]{hartshornealgebraic}).
It is enough to show that  $\varphi^\ast \mathcal{O}_{\Proj R}(1)$ is isomorphic to $\mathcal{O}_{\Proj S}(n)$ if we restrict to open subsets of the form $D_+(i(f))$ and that these isomorphisms are consistent on twofold intersections.
Indeed, locally we have $R(1)_{(f)} = fR_{(f)}$ since $\deg f = 1$ and $f$ is invertible in $R_{f}$. Moreover, \[\varphi^\ast \mathcal{O}_{\Proj R}(1)\vert_{D_+(i(f))}= (f R_{(f)} \otimes_{R_{(f)}} S_{(f)})^{\sim} \text{ and } f R_{(f)} \otimes_{R_{(f)}} S_{(f)} \cong f S_{(f)}\] which is immediately verified using the universal property of the tensor product. %(or because $f R_{(f)}$ is a free $R_{(f)}$-module). 
And these are precisely the elements of degree $n$ in $S_{f}$.

We now show that this is consistent on twofold intersections. If $g$ is another element of degree $1$ then we have $\varphi^\ast\mathcal{O}(1) = (R_{(fg)} (1) \otimes_{R_{(fg)}} S_{(fg)})^\sim$ on $D_+(fg)$ and $R_{(fg)} = f R(1)_{(fg)}$. Arguing as above thus implies $R_{(fg)}(1) \otimes_{R_{(fg)}} S_{(fg)} \cong S(n)_{(fg)}$.
\end{proof}

 At this point we should probably note that there is a more geometric (and more general) version of the previous lemma. Namely the following

\begin{Le}%brauchen die eigtl normal? nope, das kann man ueber nen endl Pullback auf die Normalisierung aussortieren.
\label{allgPullback}
 Let $X$ be a projective variety. Fix an ample line bundle $\mathcal{O}(1)$ and a line bundle $\mathcal{L}$. Let $m$ be such that $\mathcal{L}(m)$ is generated by global sections and choose $N \in \mathbb{N}$. Then there exists a finite surjective morphism $f:Y \to X$ where $Y$ is a normal projective variety such that $f^\ast \mathcal{L}(m) = \mathcal{M}^{\otimes N}$, where $\mathcal{M}$ is a line bundle which is generated by global sections.
\end{Le}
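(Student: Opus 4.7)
The plan is to reduce to projective space via the globally generated line bundle $\mathcal{L}(m)$ and then to use a Kummer $N$-th power cover of $\mathbb{P}^n$ as the model cover. First, since $\mathcal{L}(m)$ is generated by global sections, choosing generating sections $s_0, \ldots, s_n \in H^0(X, \mathcal{L}(m))$ produces a morphism $\varphi : X \to \mathbb{P}^n$ with $\varphi^\ast \mathcal{O}_{\mathbb{P}^n}(1) \cong \mathcal{L}(m)$. On $\mathbb{P}^n$ consider the finite surjective $N$-th power map
\[ \psi : \mathbb{P}^n \to \mathbb{P}^n, \qquad [x_0 : \ldots : x_n] \mapsto [x_0^N : \ldots : x_n^N], \]
which satisfies $\psi^\ast \mathcal{O}(1) \cong \mathcal{O}(N)$.

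Next I form the fiber product $Z = X \times_{\mathbb{P}^n} \mathbb{P}^n$ along $\varphi$ and $\psi$. Since being finite and surjective is preserved under base change, the first projection $Z \to X$ is finite and surjective, and $Z$ is projective. As $X$ is irreducible and $Z \to X$ is surjective, at least one irreducible component $Z'$ of $Z$ dominates $X$, and the induced map $Z' \to X$ is still finite and surjective. Let $Y$ denote the normalization of $Z'$; because $Z'$ is a variety of finite type over the base field, normalization is finite, so $Y$ is a normal projective variety and $f : Y \to X$ is finite and surjective.

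Finally, let $g : Y \to \mathbb{P}^n$ be the composition $Y \to Z' \hookrightarrow Z \to \mathbb{P}^n$ with the second projection and set $\mathcal{M} = g^\ast \mathcal{O}_{\mathbb{P}^n}(1)$. As the pullback of a globally generated line bundle, $\mathcal{M}$ is itself globally generated, and using the commutativity of the fiber product square one computes
\[ \mathcal{M}^{\otimes N} \cong g^\ast \mathcal{O}_{\mathbb{P}^n}(N) \cong g^\ast \psi^\ast \mathcal{O}(1) \cong f^\ast \varphi^\ast \mathcal{O}(1) \cong f^\ast \mathcal{L}(m), \]
which gives the required data. The main technical obstacle is that the raw fiber product $Z$ need not be irreducible or normal, so in order to produce a projective \emph{variety} $Y$ one must pass to an irreducible component dominating $X$ and then normalize, while checking that finiteness, surjectivity and the line bundle identity all persist through these operations.
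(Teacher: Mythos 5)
Your proof is correct and is essentially the standard Bloch--Gieseker covering construction (map to $\mathbb{P}^n$ via the globally generated bundle, pull back the $N$-th power Kummer cover, pass to a dominating component and normalize), which is exactly the argument the paper defers to in \cite[proof of Theorem 3.4]{robertssinghannihilators}. You correctly identified and handled the only delicate points---that the fiber product may be reducible and non-normal, and that finiteness, surjectivity, and the line-bundle identity survive the passage to a component and its normalization.
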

\begin{proof}
 This is covered in \cite[proof of Theorem 3.4]{robertssinghannihilators}.
\end{proof}

\begin{Def}
Let $f: Y' \to Y$ be a finite dominant morphism of projective varieties and $\mathcal{L}$ a line bundle on $Y$. We call a line bundle $\mathcal{M}$ on $Y'$ an \emph{$n$th root} (or a \emph{root}) of $\mathcal{L}$ if $\mathcal{M}^n$ is isomorphic to $f^\ast\mathcal{L}$, where $n \in \mathbb{N}$.
\end{Def}

\begin{Le}
\label{stdgraded}
Let $R$ be an $\mathbb{N}$-graded domain finitely generated over a field $k = R_0$. Then there exists a finite ring extension $R \subseteq S$ such that $S$ is a standard graded domain.
\end{Le}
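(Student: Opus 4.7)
The plan is to pass from $R$ to a standard graded extension by iteratively adjoining roots of a set of homogeneous algebra generators. Writing $R = k[x_1, \ldots, x_n]$ with the $x_i$ chosen homogeneous, I may assume each $d_i := \deg x_i \geq 1$, since any generator of degree $0$ already lies in $R_0 = k$ and can be omitted.

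Fix an index $i$ with $d_i > 1$. I would form
\[ R' := R[T]/(T^{d_i} - x_i), \]
which is a free $R$-module of rank $d_i$ with basis $1, T, \ldots, T^{d_i-1}$ and is therefore finite and faithfully flat over $R$ with $R \hookrightarrow R'$. Declaring $\deg T = 1$ makes the relation $T^{d_i} - x_i$ homogeneous of degree $d_i$, so $R'$ carries a compatible $\mathbb{N}$-grading. Since $R'$ need not be a domain, I would next select a minimal prime $\mathfrak{p}$ of $R'$. Two standard facts apply: minimal primes of a $\mathbb{Z}$-graded ring are homogeneous, so $\mathfrak{p}$ is a homogeneous ideal; and under a flat extension every minimal prime of the larger ring contracts to a minimal prime of the base, which here must be $(0)$ since $R$ is a domain. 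Setting $R^{(1)} := R'/\mathfrak{p}$, we obtain a finite graded domain extension $R \hookrightarrow R^{(1)}$ in which the image $y_i$ of $T$ satisfies $\deg y_i = 1$ and $y_i^{d_i} = x_i$. In particular $x_i = y_i^{d_i}$ is redundant as a $k$-algebra generator, and
\[ R^{(1)} = k[x_1, \ldots, x_{i-1}, y_i, x_{i+1}, \ldots, x_n], \]
with all other generator degrees unchanged.

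Iterating this construction once for each generator of degree greater than $1$ --- the process terminates after at most $n$ steps since it strictly decreases the number of such generators --- I arrive at a finite graded domain extension $R \subseteq S$ in which every $k$-algebra generator has degree $1$. Hence $S$ is standard graded, as required.

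The main obstacle is conceptual rather than computational: one needs the homogeneous minimal prime $\mathfrak{p}$ to contract to $(0)$, which is ensured by the flatness of $R'$ over $R$ combined with the fact that $R$ is a domain; and one needs the existence of homogeneous minimal primes, which is immediate from the standard result that in a $\mathbb{Z}$-graded ring every minimal prime is homogeneous. Once these two facts are in hand, the induction is entirely routine.
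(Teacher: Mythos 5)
Your proof is correct, and it takes a genuinely different — and arguably slicker — route than the paper's. The paper first passes to the normalization of $R$ and then invokes \cite[Corollary 4.12]{Eisenbud} to produce a monic prime factor $g_1$ of $X^{d_1}-r_1$ that already lies in $R[X]$, renormalizing after each adjunction so that the factoring lemma stays available. You avoid normality entirely: since $R[T]/(T^{d_i}-x_i)$ is free, hence finite and flat over $R$, going-down for flat maps forces every minimal prime to contract to $(0)$ in the domain $R$, and homogeneity of minimal primes in a graded ring then makes the quotient a finite graded domain extension, all without needing to understand how $T^{d_i}-x_i$ factors. In effect, the paper pays with normalization (finite over a finite-type $k$-algebra, by excellence) for an explicit factor that it never actually uses; your version replaces that factoring argument by a soft primality argument and arrives at the same inductive loop with less machinery.
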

\begin{proof}
We may assume $R$ to be normal. Fix homogeneous algebra generators $r_1, \ldots, r_n$ of $R$ and write $d_i = \deg r_i$. Consider the polynomials $f_i = X^{d_i} - r_i$. Fix an irreducible polynomial $g_1$ dividing $f_1$. By \cite[Corollary 4.12]{Eisenbud}, $g_1$ is prime in $R[X]$ and $S_1 :=R[X]/(g_1)$ is a finite extension domain such that $f_1$ has a root, i.\,e.\ a (homogeneous) element $x \in S_1$ such that $x^{d_1} = r_1$. Normalising $S_1$ and repeating this process we obtain a finite $\mathbb{N}$-graded extension domain $S'$ where each $r_i$ has a $d_i$th root which we call $x_i$. Then $S = k[x_1, \ldots, x_n]$ is of the desired form.
\end{proof}

Recall that the \emph{section ring} of an invertible sheaf $\mathcal{L}$ on a scheme $X$ is defined to be the graded ring $\bigoplus_{n \geq 0}\Gamma(X, \mathcal{L}^n)$.

Section rings are much better behaved then arbitrary graded rings. In particular, if $S$ is the section ring of an ample invertible sheaf on a projective variety then $S(1)^\sim$ is invertible and equal to $\mathcal{L}$. Moreover, if the variety is normal then so is $S$ (see e.\,g.\ \cite[Proposition 2.1]{hyrysmith}).

Another issue in working with arbitrary graded extension domains $R \subseteq S$ is that we cannot ensure that $S$ is standard graded and normal -- the normalisation of a standard graded ring may no longer be standard graded. However, if $S$ is the section ring of a globally generated ample line bundle $\mathcal{L}$ then $\Proj S$ is covered by standard open sets $D_+(f)$, where the $f \in S$ are of degree $1$. This slightly weaker condition is stable under finite pullbacks and is a good enough replacement for standard graded in virtually every situation. Also recall that if $\mathcal{L}$ is an ample invertible sheaf on a projective scheme $X$ and $S$ the section ring of $\mathcal{L}$ then one has a canonical isomorphism $X \to \Proj S$ -- see \cite[Th\'{e}or\`{e}me 4.5.2]{EGAII}.

The following Lemma is contained in \cite[Lemma 3.10]{brennertightproj} but the author does not provide a proof.

\begin{Le}
\label{Leholger}
Let $f: Y' \to Y$ be a finite dominant morphism of projective varieties over a field $k$. Fix an ample line bundle $\mathcal{O}(1)$ on $Y$. Then \[\bigoplus_{n \geq 0} \Gamma(Y, \mathcal{O}(n)) \subseteq \bigoplus_{n \geq 0} \Gamma(Y', f^\ast\mathcal{O}(n))\]
is a finite extension of graded domains. %and $\Proj \bigoplus_{n \geq 0} \Gamma(Y, \mathcal{O}(n)) = Y$.
\end{Le}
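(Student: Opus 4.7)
The plan is to recognise $S := \bigoplus_{n \ge 0}\Gamma(Y', f^{\ast}\mathcal O(n))$ as the module of twisted sections of a coherent sheaf on $Y$ via the projection formula, and then to invoke the classical Serre-type finiteness for section modules over the section ring of an ample line bundle.

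For the assertion that $R$ and $S$ are graded domains, I would use that $Y$ and $Y'$ are integral. Restriction to the generic point $\xi$ of $Y$ is injective on global sections of every invertible sheaf (since such sheaves are torsion-free on an integral scheme), and trivialising $\mathcal O(1)_\xi \cong K(Y)$ identifies $\bigoplus_n \mathcal O(n)_\xi$ with the polynomial ring $K(Y)[T]$ so that multiplication of sections becomes polynomial multiplication. Hence $R$ embeds as a graded subring of $K(Y)[T]$; the same argument on $Y'$ handles $S$. The inclusion $R \hookrightarrow S$ is pullback of sections, visibly graded and injective.

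For the finiteness claim, since $f$ is finite (hence affine) and $\mathcal O(n)$ is locally free, the projection formula gives
\[
\Gamma(Y', f^{\ast}\mathcal O(n)) \;=\; \Gamma(Y,\, f_{\ast}\mathcal O_{Y'} \otimes \mathcal O(n)).
\]
Writing $\mathcal G := f_{\ast}\mathcal O_{Y'}$, which is coherent on $Y$ because $f$ is finite, this identifies $S$ with $\Gamma_\ast(\mathcal G) := \bigoplus_{n \ge 0}\Gamma(Y, \mathcal G(n))$ as a graded $R$-module.

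The remaining task is to show that for any coherent $\mathcal G$ on a projective variety $Y$ with ample $\mathcal O(1)$, the module $\Gamma_\ast(\mathcal G)$ is finitely generated over $R = \Gamma_\ast(\mathcal O_Y)$. Choosing $d$ with $\mathcal O(d)$ very ample embeds $Y$ into some $\mathbb P^N_k$, and Serre's finiteness theorem applied to the coherent sheaves $\mathcal G(i)$ for $0 \le i < d$ gives that each $\bigoplus_n \Gamma(Y, \mathcal G(i + nd))$ is finitely generated over the Veronese subring $R^{(d)} := \bigoplus_n \Gamma(Y, \mathcal O(nd))$. Summing over $i$ presents $\Gamma_\ast(\mathcal G)$ as finitely generated over $R^{(d)}$, hence a fortiori over $R \supseteq R^{(d)}$. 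The main obstacle is precisely this last step: transferring the standard Serre finiteness from the very ample setting on $R^{(d)}$ to merely ample $\mathcal O(1)$ on $R$ via the Veronese trick. Everything else is formal once the projection formula is applied.
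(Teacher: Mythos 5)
Your proof is correct but proceeds by a genuinely different route from the paper. The paper first builds the inclusion $R \hookrightarrow S$ directly by constructing compatible injections $\mathcal O(n) \to f_\ast f^\ast \mathcal O(n)$ and checking multiplicativity, and then deduces finiteness of $R \subseteq S$ by applying homogeneous Noether normalisation twice: once to find homogeneous $x_0, \dots, x_d \in R$ with $D_+(x_i)$ covering $\Proj R = Y$, and again after observing that the $D_+(i(x_j))$ cover $\Proj S = Y'$. You instead use the projection formula $f_\ast f^\ast \mathcal O(n) \cong f_\ast \mathcal O_{Y'} \otimes \mathcal O(n)$ (valid since $f$ is affine and $\mathcal O(n)$ is locally free) to identify $S$ with $\Gamma_\ast(\mathcal G)$ for the coherent sheaf $\mathcal G = f_\ast \mathcal O_{Y'}$, and then invoke Serre's finiteness theorem for coherent sheaves on a projective scheme, passing through the Veronese subring $R^{(d)}$ to handle merely ample (rather than very ample) $\mathcal O(1)$. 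The paper's argument is more hands-on and stays close to the $\Proj$ geometry it needs elsewhere (pullback of coverings), while yours isolates the cleaner module-theoretic statement that $\Gamma_\ast(-)$ of any coherent sheaf is finite over the section ring of an ample line bundle, which generalizes beyond pushforwards of structure sheaves. Your domain argument via localization at the generic point is also different from (and somewhat more concise than) the paper's direct tensor-injectivity check, and is sound. One small point you should make explicit: the Serre theorem yields finite generation over the image of the ambient polynomial ring $k[x_0,\dots,x_N]$ under $\mathcal O(d)$, and one then observes that this action factors through $R^{(d)}$, so the same finite generating set works over $R^{(d)}$ and a fortiori over $R$; as written this step is implicit but correct.
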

\begin{proof}
Call these rings $R$ and $S$. %Sich noch klarmachen, wieso pushforward hier eigtl nix macht (NB da der Morphismus affin ist, ist $f_\ast$ exakt(EGAII)+ <- das ist hier irrelevant.
%If $\dim Y = 0$ then $Y$ consists of a point and $R = K[t]$, where $K$ is the function field of $Y$ and $t$ is an indeterminate. Similarly, $S = L[t]$, where $L$ is the function field of $Y'$. And since $f$ is finite $K \subseteq L$ is a finite field extension. Hence, the extension is finite. So assume that $\dim Y \geq 1$.
To begin with, $R$ and $S$ are domains. For if $s, t \in S$ are nonzero and homogeneous, where $s \in \Gamma(Y', f^\ast\mathcal{O}(m)), t \in \Gamma(Y', f^\ast\mathcal{O}(n))$, then multiplication $s \cdot t$ corresponds to a morphism $f^\ast\mathcal{O}(n) \to f^\ast\mathcal{O}(n) \otimes f^\ast\mathcal{O}(m) = f^\ast \mathcal{O}(n+m)$ induced by $\mathcal{O} \to f^\ast\mathcal{O}(m)$. The latter morphism is injective and since tensoring with $f^\ast\mathcal{O}(n)$ is exact we have that $S$ is a domain. One shows similarly that $R$ is a domain.

Note that $f^\ast\mathcal{O}(1)$ is ample by \cite[Ex.\ III.5.7 (d)]{hartshornealgebraic} or \cite[Proposition 4.4]{hartshorneamplesubvarieties} (the morphism is dominant hence surjective since it is proper).
 As the line bundles are ample, $R$ and $S$ are $k$-algebras of finite type (see e.\,g.\ \cite[Proposition 9.2]{badalgsurf}).
%Wenn man bei dem Diagramm globale Schnitte nimmt sieht man unten den Pushforward gar nicht. Ich weiss momentan nicht ob man die Exaktheit von $f_\ast$ ueberhaupt braucht.
We have injective morphisms $\mathcal{O}(n) \to f_\ast f^\ast \mathcal{O}(n)$ (this follows since $\mathcal{O}_{f(y)} \to f_\ast \mathcal{O}_y$ is injective for $y \in Y'$, cf. \cite[Ex.\ I.3.3 (c)]{hartshornealgebraic}). %diese sind injektiv da dass eine \mathcal{O} \to \mathcal{F}^{-1} \otimes \mathcal{G} -Situation ist, und das kann man lokal verifizieren (ein lokales s irgendwo 0, es ist im gen Punkt null, andere Schnitte da auch null, aber die Rhos sind alle injektiv (Wegen lokalfrei!).
Moreover, we have a commutative diagram
\[\begin{xy}
   \xymatrix{\mathcal{O}(n) \otimes \mathcal{O}(m) \ar[d] \ar[r] & \mathcal{O}(n+m)\ar[d]\\
f_\ast f^\ast \mathcal{O}(n) \otimes f_\ast f^\ast \mathcal{O}(m) \ar[r] & f_\ast f^\ast \mathcal{O}(n+m).}
  \end{xy}\]
%das uberprueft man fuer rechts unten lokal.
Taking global sections shows that we have an injective $k$-linear ringhomomorphism as desired. See also \cite[Chapitre 0, 5.4.6]{EGAI}.

%eigtl folgt, da alles separiert und veT und Y sogar eigtl ueber Spec k
By homogeneous Noether normalisation (\cite[Theorem 1.5.17]{brunsherzog}) we find homogeneous elements $x_0, \ldots, x_d$ of $R$ such that the $D_+(x_i)$ cover $\Proj R = Y$. Then we have that the $f^{-1}(D_+(x_0)) = D_+(i(x_0)), \ldots, f^{-1}(D_+(x_d)) = D_+(i(x_d))$ cover $\Proj S$, where $i: R \to S$ denotes the inclusion. Thus, again by homogeneous Noether normalisation, $R \subseteq S$ is finite.
\end{proof}

\begin{Prop}
\label{Propringext}
Let $f: Y' \to Y$ be a finite dominant morphism of projective varieties and let $\mathcal{O}_Y(1)$ be an ample line bundle on $Y$. Fix an ample line bundle $\mathcal{L}$ on $Y'$ such that $\mathcal{L}^k = f^\ast\mathcal{O}_Y(l)$ for some $k, l$ in $\mathbb{N}$. Choose a minimal such $k$ and identify these two line bundles along a fixed isomorphism. Then \[S = \bigoplus_{n,m \geq 0} \Gamma(Y', \mathcal{L}^m \otimes f^\ast \mathcal{O}_Y(n))/\sim\] is a finite extension domain of $R =\bigoplus_{n \geq 0} \Gamma(Y, \mathcal{O}_Y(n))$, where $\sim$ denotes the identification made above. Moreover, it is a graded ring extension if we assign elements of $\mathcal{L}^m(n)$ the degree $\frac{ml}{k} + n$.
\end{Prop}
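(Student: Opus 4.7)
I would realize $S$ as the quotient of the bigraded ring $\tilde S := \bigoplus_{m, n \geq 0} \Gamma(Y', \mathcal{L}^m \otimes f^\ast\mathcal{O}_Y(n))$, with ring structure given by tensor products of sections, by the homogeneous equivalence $\sim$ induced by $\iota$: it identifies $s \in \Gamma(Y', \mathcal{L}^{m+k} \otimes f^\ast\mathcal{O}_Y(n))$ with $\iota(s) \in \Gamma(Y', \mathcal{L}^m \otimes f^\ast\mathcal{O}_Y(n+l))$. Since $\tfrac{(m+k)l}{k} + n = \tfrac{ml}{k} + (n+l)$, the assignment $(m,n) \mapsto \tfrac{ml}{k} + n$ descends to a $\mathbb{Q}_{\geq 0}$-grading on $S$, and the inclusion $R \hookrightarrow S$ comes from the injection $\Gamma(Y, \mathcal{O}_Y(n)) \hookrightarrow \Gamma(Y', f^\ast\mathcal{O}_Y(n))$ of Lemma \ref{Leholger} into the $(0, n)$-components.

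Repeated application of $\sim$ yields the canonical form $S \cong \bigoplus_{0 \leq m < k}\bigoplus_{n \geq 0} \Gamma(Y', \mathcal{L}^m \otimes f^\ast\mathcal{O}_Y(n))$ as an $R$-module. For finiteness I would use the projection formula $\Gamma(Y', \mathcal{L}^m \otimes f^\ast\mathcal{O}_Y(n)) \cong \Gamma(Y, f_\ast\mathcal{L}^m \otimes \mathcal{O}_Y(n))$: since $f$ is finite $f_\ast\mathcal{L}^m$ is coherent on $Y$, and Serre's theorem applied to the ample line bundle $\mathcal{O}_Y(1)$ gives that each $\bigoplus_n \Gamma(Y, f_\ast\mathcal{L}^m \otimes \mathcal{O}_Y(n))$ is a finitely generated $R$-module; summing over the $k$ values of $m$ shows $S$ is finite over $R$.

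The main obstacle is showing $S$ is an integral domain. I would fix generators $\mu$ of $\mathcal{L}_\eta$ and $\nu$ of $(f^\ast\mathcal{O}_Y(1))_\eta$ at the generic point $\eta$ of $Y'$ and define a bigraded ring homomorphism $\tilde S \to K(Y')[T, U]$ by $s \mapsto \alpha_{m,n}(s)\,T^m U^n$, where $s_\eta = \alpha_{m,n}(s)\,\mu^m \otimes \nu^n$; this is injective on each bigraded piece because global sections of line bundles on the integral scheme $Y'$ inject into their generic stalks. Writing $\iota(\mu^k) = c\,\nu^l$ with $c \in K(Y')^\times$, the relation $\sim$ maps into the ideal $(T^k - cU^l)$, yielding a map $\overline{\Phi}: S \to K(Y')[T, U]/(T^k - cU^l)$ which is injective since $\{T^m U^n : 0 \leq m < k,\, n \geq 0\}$ forms a $K(Y')$-basis of the target matching the canonical form of $S$ piece-by-piece.

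It then suffices to show that $T^k - cU^l$ is irreducible in $K(Y')[T, U]$, and this is where the minimality of $k$ enters crucially. If $c = d^p$ for some $d \in K(Y')^\times$ and some prime $p \mid \gcd(k, l)$, the rational section $\xi := d\,\nu^{l/p}\cdot \mu^{-k/p}$ of $\mathcal{L}^{-k/p} \otimes f^\ast\mathcal{O}_Y(l/p)$ would satisfy $\xi^p = \iota(\mu^k)\cdot \mu^{-k}$, the global trivializing section of $\mathcal{L}^{-k} \otimes f^\ast\mathcal{O}_Y(l) \cong \mathcal{O}_{Y'}$ induced by $\iota$; since divisors on a variety are torsion-free, $\operatorname{div}(\xi) = 0$, yielding an isomorphism $\mathcal{L}^{k/p} \cong f^\ast\mathcal{O}_Y(l/p)$ and contradicting the minimality of $k$. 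The standard Kummer criterion then gives irreducibility, so $K(Y')[T, U]/(T^k - cU^l)$ is a domain and therefore so is $S$.
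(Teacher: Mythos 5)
Your proof is correct in outline but takes a genuinely different and considerably more detailed route than the paper's. The paper proves finiteness by replacing $Y'$ with its normalization, using projective normality of a $d$-uple embedding of $\bigoplus_n \Gamma(Y', f^\ast\mathcal{O}_Y(n))$ together with the finiteness criterion of \cite[Ex.\ II.5.9]{hartshornealgebraic}; it is entirely silent on why $S$ is a domain, asserting only that $R \subseteq S$ is integral. You instead get finiteness via the projection formula $\Gamma(Y', \mathcal{L}^m \otimes f^\ast\mathcal{O}_Y(n)) \cong \Gamma(Y, f_\ast\mathcal{L}^m \otimes \mathcal{O}_Y(n))$ and Serre's theorem applied to the coherent sheaves $f_\ast\mathcal{L}^m$, which is arguably cleaner. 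More substantially, you give an explicit proof that $S$ is a domain by embedding it into $K(Y')[T,U]/(T^k - cU^l)$ and proving the binomial irreducible; this fills a real gap in the paper's exposition and is, as you correctly observe, exactly where the minimality of $k$ is used.

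Two points in your domainness argument deserve attention, however. First, the standard criterion for $X^n - a$ over a field $K$ to be irreducible requires not only $a \notin K^p$ for every prime $p \mid n$ but also $a \notin -4K^4$ whenever $4 \mid n$. Passing to $K(Y')(U)$ this translates to the additional hypothesis $c \notin -4K(Y')^4$ when $4 \mid \gcd(k,l)$, which you do not verify. Over an algebraically closed base this is harmless ($-4$ is then a fourth power, so the condition collapses into $c \notin K(Y')^2$, which you do exclude), but the proposition is stated for general projective varieties. The omitted case is in fact ruled out by the very same divisor argument: if $c = -4d^4$ then the rational section $2d^2\,\nu^{l/2}\mu^{-k/2}$ of $\mathcal{L}^{-k/2}\otimes f^\ast\mathcal{O}_Y(l/2)$ has square equal to $-1$ times the $\iota$-trivializing section; since a unit multiple of a nowhere-vanishing section is still nowhere-vanishing, its divisor is again zero, giving $\mathcal{L}^{k/2} \cong f^\ast\mathcal{O}_Y(l/2)$ and contradicting minimality. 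Second, the statement ``divisors on a variety are torsion-free'', and in particular the step $\operatorname{div}(\xi^p) = 0 \Rightarrow \operatorname{div}(\xi) = 0 \Rightarrow \xi$ trivializing, requires $Y'$ to be normal so that the Weil divisor group is available and free abelian. The paper explicitly replaces $Y'$ by its normalization; you should do the same and note that $S$ embeds in the section ring built from the normalization (so domainness of the latter suffices).
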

\begin{proof}
%Again the case $\dim Y = 0$ being trivial we may assume that $\dim Y \geq 1$.
By Lemma \ref{Leholger} we have that $\bigoplus_{n \geq 0} \Gamma(Y, \mathcal{O}_Y(n)) \subseteq \bigoplus_{n \geq 0} \Gamma(Y', f^\ast\mathcal{O}_Y(n))$ is a finite extension of graded domains. In particular, the original extension $R \subseteq S$ is integral.
To prove that the extension is finite it remains to show that \[\bigoplus_{t \geq 0}\Gamma(Y', \mathcal{L}^s(t)) \text{ is finite over } \bigoplus_{n \geq 0}\Gamma(Y', f^\ast \mathcal{O}_Y(n)) =: T \text{ for } s=1, \ldots, k-1.\]

To this end, we may replace $Y'$ by its normalisation. By \cite[Ex.\ II.5.14]{hartshornealgebraic}, a $d$-uple embedding $T^{(d)}$ of $T$ is then projectively normal for $d \gg 0$. Note that $T$ is finite over $T^{(d)}$ (use \cite[Corollary 4.5]{Eisenbud}) and that $T^{(d)}$ is standard graded. This in turn allows us to invoke \cite[Ex.\ II.5.9]{hartshornealgebraic}. Thus for each $s$ we have finitely many $\Gamma(Y', \mathcal{L}^s(t))$ which are finite dimensional vector spaces and a finite module. Hence the extension is finite. %Evtl. noch Ex. 4.1 benutzen fuer endlichkeit des moduls.
\end{proof}

%\begin{Bem}
%\label{directlimit}
%Let $R$ be an $\mathbb{N}$-graded domain finitely generated over a field $R_0 = k$ of dimension $d \geq 2$ and write $Y = \Proj R$.  Passing to a finite ring extension one may assume that $R$ is normal and that $\Proj R$ is covered by standard open sets coming from elements of degree $1$. The latter condition implies that the $\mathcal{O}_Y(n)$ are invertible.
%Since $R$ is normal and $d \geq 2$ one has $\Gamma(Y, \mathcal{O}_Y(n)) = R_n$. Hence, the ring extensions obtained in Proposition \ref{Propringext} are all contained in $R^{+ \text{GR}}$ since they are finite over $R$ and extend the grading.

%Conversely, looking at the normalisations of the rings $S_m$ defined in the proof of Proposition \ref{daggernotample} we see that every element of $R^{+ \text{GR}}$ is obtained in this way.
%Also with a similar argument one sees that one can obtain every element of $R^{+ \text{gr}}$ if one looks at the ring extensions $\bigoplus_{n \geq 0}\Gamma(Y', f^\ast \mathcal{O}_Y(n))$ for finite dominant morphisms $f: Y' \to Y$.
%\end{Bem}

\begin{Prop}
\label{PCover}
Let $Y$ be a projective variety with an ample line bundle $\mathcal{L}$. Then $\Proj \bigoplus_{n\geq 0} \Gamma(Y, \mathcal{L}^n)$ can be covered by a finite number of $D_+(f)$ where $f \in \Gamma(Y, \mathcal{L})$ if and only if $\mathcal{L}$ is generated by global sections.
\end{Prop}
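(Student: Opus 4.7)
The plan is to exploit the canonical identification $Y \cong \Proj S$, where $S = \bigoplus_{n \geq 0} \Gamma(Y, \mathcal{L}^n)$ (this is EGA II, Th\'eor\`eme 4.5.2, already cited just above the proposition), together with the observation that for a section $f \in S_1 = \Gamma(Y, \mathcal{L})$, the principal open $D_+(f) \subseteq \Proj S$ corresponds under this isomorphism to the open non-vanishing locus
\[ Y_f = \{\, y \in Y \,:\, f(y) \neq 0 \text{ in } \mathcal{L}(y) = \mathcal{L}_y \otimes_{\mathcal{O}_{Y,y}} k(y) \,\}. \]
This identification is essentially built into the construction: on $Y_f$ the section $f$ trivialises $\mathcal{L}$, so $\mathcal{L}|_{Y_f} \cong \mathcal{O}_{Y_f}$ and $\Gamma(Y_f, \mathcal{L}^n) = f^n \cdot \Gamma(Y_f, \mathcal{O}_{Y_f})$; this matches the localisation $S_{(f)} = \Gamma(D_+(f), \mathcal{O}_{\Proj S})$ after dividing by $f^n$.

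For the direction ``globally generated $\Rightarrow$ covered by finitely many $D_+(f)$'', suppose $\mathcal{L}$ is generated by global sections. Then by definition the family $\{Y_f\}_{f \in \Gamma(Y,\mathcal{L})}$ covers $Y$: for every $y \in Y$ there is a global section $f$ with $f(y) \neq 0$. Since $Y$ is projective, hence quasi-compact, this open cover has a finite subcover $Y = Y_{f_1} \cup \cdots \cup Y_{f_n}$. Transporting along $Y \cong \Proj S$ gives $\Proj S = D_+(f_1) \cup \cdots \cup D_+(f_n)$ with $f_i \in S_1$.

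For the converse, suppose $\Proj S = \bigcup_{i=1}^n D_+(f_i)$ with $f_i \in S_1 = \Gamma(Y, \mathcal{L})$. Again using $Y \cong \Proj S$, this means $Y = \bigcup_{i=1}^n Y_{f_i}$, so for every $y \in Y$ some $f_i$ does not vanish at $y$. Equivalently, the evaluation map $\mathcal{O}_Y^{\oplus n} \to \mathcal{L}$ determined by $(f_1,\ldots,f_n)$ is surjective at every stalk (it is surjective modulo $\mathfrak{m}_y$ and Nakayama applied to the stalk $\mathcal{L}_y$ yields stalkwise surjectivity), so $\mathcal{L}$ is generated by its global sections $f_1,\ldots,f_n$.

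The only point that requires any care is the identification $D_+(f) \leftrightarrow Y_f$ under $Y \cong \Proj S$; everything else is formal. I would briefly justify this by noting that $f$, viewed as an element of $S_1$, is by construction the same section as $f \in \Gamma(Y, \mathcal{L})$, so the loci ``$f$ invertible'' on both sides correspond. No deeper obstacle arises.
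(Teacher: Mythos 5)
Your approach is substantively the same as the paper's: both reduce the proposition to establishing that, under the canonical isomorphism $Y \cong \Proj S$, the principal open $D_+(f) \subseteq \Proj S$ corresponds to the non-vanishing locus $Y_f \subseteq Y$ for $f \in S_1 = \Gamma(Y,\mathcal{L})$, after which the equivalence follows mechanically from quasi-compactness and the definition of global generation. You also correctly flag this identification as the one non-formal step. The problem is that you do not prove it; the justification you offer --- that $f \in S_1$ is ``by construction the same section'' as $f \in \Gamma(Y,\mathcal{L})$, ``so the loci `$f$ invertible' on both sides correspond'' --- is a restatement of the claim, not an argument. ``$f \notin P$'' (which defines $D_+(f)$ and concerns invertibility of $f$ in the graded localisation $S_P$) and ``$f$ generates the stalk $\mathcal{L}_y$'' (which defines $Y_f$) are a priori different conditions in different rings, and showing they agree under the abstract isomorphism of EGA II 4.5.2 requires work.

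The paper splits this into two inclusions. $D_+(f) \subseteq Y_f$ is the easy one: on $D_+(f)$ one has $\Gamma(D_+(f),\mathcal{L}) = S(1)_{(f)} = f\cdot S_{(f)}$, so $f$ generates the stalk $\mathcal{L}_P$, and Nakayama rules out $f \in \mathfrak m_P\mathcal{L}_P$. The reverse inclusion $Y_f \subseteq D_+(f)$ is where ampleness genuinely enters: one covers $\Proj S$ by $D_+(g)$ with $g \in S_d$ for suitable $d$ (possible because $\mathcal{L}$ is ample), and then for $P \in Y_f \cap D_+(g)$ notes that $f^d$ and $g$ both generate the stalk $\mathcal{L}^d_P$, so $g = u f^d$ with $u$ a unit, forcing $f^d$ (hence $f$) to be a unit in $S_P$ since $g$ is; thus $f \notin P$. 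Without some such argument your proof is incomplete precisely at the step you yourself call the crux. If you prefer not to reprove it, the clean alternative is to cite EGA II 3.7.1 (construction of $r_{\mathcal{L}}: G(\mathcal{L}) \to \Proj S$ with $r_{\mathcal{L}}^{-1}(D_+(f)) = Y_f$) together with 4.5.2 --- but a heuristic that the two $f$'s ``are the same'' is not a substitute.
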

\begin{proof}
We will use the canonical isomorphism $Y \to \Proj \bigoplus_{n\geq0}\Gamma(Y, \mathcal{L}^n)$ to identify $Y$ and $\Proj \bigoplus_{n\geq0}\Gamma(Y, \mathcal{L}^n)$.

First of all, we show that $Y_{f} = \{y \in Y : f \notin \mathfrak{m}_y \mathcal{L}_y \}$ and $D_+(f) = \{y \in Y : f \notin y\}$ are equal for $f \in \Gamma(Y, \mathcal{L})$. See also \cite[Proposition 2.6.3]{EGAII} (especially for the second inclusion).

Fix a homogeneous prime $P$ in $S = \bigoplus_{n\geq 0} \Gamma(Y, \mathcal{L}^n)$ and assume that $f \notin P$, where $\deg f =1$. Then $f$ is a generator of $\Gamma(D_+(f),\mathcal{L}) = S(1)_{(f)}$. And hence $f$ is a generator of $\mathcal{L}_{P}$. %Direct limit argument along the lines of Lemma 2.4 in eisenbud, note that \lim_f = S_(P) Das Ding davor ist HA II.5.12
If $f \in \mathfrak{m}_P \mathcal{L}_P$ then $\mathfrak{m}_P \mathcal{L}_P = \mathcal{L}_P$ and we get a contradiction by Nakayama's lemma.
%Other Inclusion

For the other inclusion observe that $D_+(g)$ for finitely many $g \in S_d$ cover $Y$ for some $d$ since $\mathcal{L}$ is ample (cf.\ \cite[Th\'{e}or\`{e}me 4.5.2]{EGAII}) and $Y$ is quasi-compact. Consider $P \in D_+(g) \cap Y_f$ where $\deg f = 1$ -- in particular, $f$ generates the stalk $\mathcal{L}_P$. %Follows from Nakayama's lemma. f is not zero if we reduce by m hence a generator.
Hence, both $g$ and $f^d$ are generators of $\mathcal{L}^d_P$ and we therefore have $\frac{g}{f^d} f^d = g$. Consequently, $f^d$ (and then $f$) is a unit in $S_P$ since $g$ is. And this means $f \notin P$.

We thus have $D_+(f) = Y_f$. Moreover, the $Y_f$ cover $Y$ if and only if $\mathcal{L}$ is generated by global sections. And since $Y$ is quasi-compact a finite number of them will do.
%The ``only if'' part is implied by \cite[Proposition 2.6.3]{EGAII}. For the converse we have that the $X_f$, where $f \in \Gamma(Y,\mathcal{L})$, cover $Y$. Also note that $X_{f g} = X_f \cap X_g$ (see \cite[0.5.5.3]{EGAI}). We prove that $X_g \cap D_+(f) = X_{fg}$ for $g \in \Gamma(Y, \mathcal{L}) = S_1$. Then the result follows since \[ D_+(f) = \bigcup_{g \in S_1} X_g D_+{f} = X_f.\]
\end{proof}

\begin{Prop}
\label{Syzdet}
Let $Y$ denote a smooth projective curve over an algebraically closed field and let $\mathcal{S}$ denote a locally free sheaf of rank $r$. Assume furthermore that $\det \mathcal{S} = \mathcal{O}_Y(n)$ for some integer $n$, where $\mathcal{O}_Y(1)$ is an ample invertible sheaf.
Then some twist of $\mathcal{S}$ is a syzygy bundle with respect to $R =\bigoplus_{l \geq 0} \Gamma(Y, \mathcal{O}_Y(l))$.
\end{Prop}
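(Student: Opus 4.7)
The plan is to produce the syzygy presentation by first writing a sufficiently high twist $\mathcal{S}^\vee(k)$ as a quotient of a trivial bundle with a line-bundle kernel; the determinant hypothesis $\det \mathcal{S} = \mathcal{O}_Y(n)$ then pins down the kernel as a specific twist of $\mathcal{O}_Y$, and dualising plus one further twist yields the desired syzygy sequence.

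First I would invoke Serre's theorem to choose $k \gg 0$ so that $\mathcal{S}^\vee(k)$ is globally generated. The key geometric step is then to exhibit $r+1$ global sections generating $\mathcal{S}^\vee(k)$ at every point, i.e.\ to construct a surjection $\mathcal{O}_Y^{r+1} \twoheadrightarrow \mathcal{S}^\vee(k)$. This is the standard fact that on a smooth projective curve over an algebraically closed field, any globally generated vector bundle of rank $r$ can be generated by $r+1$ sections. The argument is by induction on the rank: a general section of a globally generated bundle of rank $\geq 2$ on a curve is nowhere vanishing (its expected zero-locus has codimension $\geq 2$), so it splits off an $\mathcal{O}_Y$ subbundle with a still globally generated rank-$(r-1)$ quotient; iterating down to rank one and using that two general global sections of a globally generated line bundle on a curve share no common zero produces the required $r+1$ sections. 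I expect this Bertini-type dimension count to be the main obstacle.

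Once the surjection is in hand, the kernel $\mathcal{K}$ is locally free of rank one, and taking determinants in $0 \to \mathcal{K} \to \mathcal{O}_Y^{r+1} \to \mathcal{S}^\vee(k) \to 0$ gives
\[
\mathcal{K} \;=\; \bigl(\det \mathcal{S}^\vee(k)\bigr)^{-1} \;=\; \det(\mathcal{S})(-kr) \;=\; \mathcal{O}_Y(n-kr).
\]
Dualising (everything being locally free) yields
\[
0 \to \mathcal{S}(-k) \to \mathcal{O}_Y^{r+1} \to \mathcal{O}_Y(kr-n) \to 0.
\]
Choosing $k$ large enough so that $kr - n > 0$, the right-hand map is given by $r+1$ homogeneous elements $f_1, \ldots, f_{r+1} \in H^0(Y, \mathcal{O}_Y(kr-n)) = R_{kr-n}$ with no common zero. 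Finally, twisting this sequence by $\mathcal{O}_Y(n-kr)$ produces
\[
0 \to \mathcal{S}\bigl(n - k(r+1)\bigr) \to \bigoplus_{i=1}^{r+1} \mathcal{O}_Y\bigl(-(kr-n)\bigr) \to \mathcal{O}_Y \to 0,
\]
which is exactly the defining sequence of $\Syz(f_1, \ldots, f_{r+1})$. Hence the twist $\mathcal{S}(n - k(r+1))$ is a syzygy bundle with respect to $R$, proving the proposition.
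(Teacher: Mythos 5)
Your argument is correct and follows the same path as the paper's: twist $\mathcal{S}^\vee$ until it is globally generated, surject from $\mathcal{O}_Y^{r+1}$, identify the line-bundle kernel via determinants, then dualise and twist; the only difference is that the paper simply cites \cite[Lemma 2.3]{brennertestexponent} for the fact that a globally generated rank-$r$ bundle on a smooth projective curve is generated by $r+1$ sections (and \cite[Ex.~II.5.16~(d)]{hartshornealgebraic} for the determinant identification), whereas you re-derive the former. The one point worth tightening in your re-derivation is the lifting step: a nowhere-vanishing section of the quotient $\mathcal{Q} = \mathcal{E}/\mathcal{O}_Y$ need not lie in the image of $H^0(Y,\mathcal{E}) \to H^0(Y,\mathcal{Q})$, so ``iterating down'' does not automatically hand you $r+1$ sections of $\mathcal{E}$ itself; the fix is to carry out the incidence-variety argument inside $W := \im\bigl(H^0(Y,\mathcal{E}) \to H^0(Y,\mathcal{Q})\bigr)$, which does still generate $\mathcal{Q}$ at every point because $\mathcal{E}$ is globally generated, so that all the sections you ultimately choose on the successive quotients are liftable to $H^0(Y,\mathcal{E})$.
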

\begin{proof}
Since $\mathcal{O}_Y(1)$ is ample, we have that $\mathcal{S}^\vee(m)$ is generated by global sections for some $m \geq 0$. Hence, by \cite[Lemma 2.3]{brennertestexponent} we obtain an exact sequence $0 \to \mathcal{L} \to \mathcal{O}_Y^{r+1} \to \mathcal{S}^\vee(m) \to 0$, where $\mathcal{L}$ is a line bundle. By \cite[Ex.\ II.5.16 (d)]{hartshornealgebraic} we have an isomorphism $\mathcal{L} = \mathcal{O}_Y(-mr) \otimes (\det \mathcal{S}) = \mathcal{O}_Y(n - mr)$. Therefore, we get an exact sequence
$0 \to \mathcal{S} \to \mathcal{O}_Y(m)^{r+1} \to \mathcal{O}_Y(m(r+1) -n) \to 0$. Twisting by $\mathcal{O}_Y(n-m(r+1))$ yields that $\mathcal{S}(n -m(r+1))$ is a syzygy bundle.
\end{proof}

\section{Almost zero for vector bundles}
\label{SectionAlmostZero}

In this section we will define a notion of almost zero for cohomology classes of vector bundles on curves. We will compare this to the notion of almost zero in \cite{robertssinghannihilators} and prove that almost zero characterises dagger closure.

\begin{Def}
\label{almostzero}
Let $\mathcal{S}$ be a vector bundle on a smooth projective curve $Y$ over an algebraically closed field $k$ together with a cohomology class $c \in H^1(Y, \mathcal{S})$. We say that $c$ is \emph{almost zero} if for all $\eps > 0$ there exists a finite morphism $\varphi: Y' \to Y$ of smooth projective curves and a line bundle $\mathcal{L}$ on $Y'$ with a global section $s \neq 0$ such that $\deg \mathcal{L}/\deg \varphi < \eps$ and such that \[s \varphi^\ast(c) = 0 \in H^1(Y', \mathcal{L} \otimes \varphi^\ast \mathcal{S}).\] Here $s \varphi^\ast(c)$ is induced by the morphism \[\begin{xy}\xymatrix{0 \ar[r]& \varphi^\ast \mathcal{S} \ar[r]^>>>>>{\cdot s}& \varphi^\ast \mathcal{S} \otimes \mathcal{L}}.\end{xy}\] %this morphism is induced by $\mathcal{O}_{C'} to \mathcal{L} and tensor this with S
We say that $\mathcal{S}$ is almost zero if every $c \in H^1(Y, \mathcal{S})$ is almost zero. By abuse of notation we will also sometimes say that $s$ annihilates $c$ if $s \varphi^\ast(c) = 0$.
\end{Def}

\begin{Bem}
\label{BemAlmostZeroGedoens}
If $\mathcal{S}$ is almost zero then there exists for every $\eps >0$ a line bundle $\mathcal{L}$ with a global section $s \neq 0$ such that $\deg \mathcal{L}/\deg \varphi < \eps$ so that $s$ annihilates every $c \in H^1(Y,\mathcal{S})$. This follows since $H^1(Y,S)$ is finitely generated as a $k$-vector space. We note, however, that this neither implies that $H^1(Y', \varphi^\ast \mathcal{S}) =0$ nor that $H^1(Y', \varphi^\ast \mathcal{S}) \to H^1(Y', \varphi^\ast \mathcal{S} \otimes \mathcal{L})$ is the zero map.
\end{Bem}

We now want to prove that the notion of ``almost zero'' in the sense of \cite[Question 3.3]{robertssinghannihilators} implies our notion of ``almost zero'' over an algebraically closed field of characteristic zero.
First, we recall this definition in a form adapted to our situation.

\begin{Def}
\label{RSSalmostzero}
Let $R$ be an $\mathbb{N}$-graded domain which is finitely generated over a field $R_0$ of characteristic zero, write $X = \Proj R$ and assume that $\mathcal{O}_X(1)$ is invertible. An element $c \in H^1(X, \mathcal{O}_X(m))$ is \emph{almost zero} if for every $\eps > 0$ there exists a finite $\mathbb{Q}$-graded extension $R \subseteq S$ (preserving degrees) such that the image of $c$ under the induced map $H^1(X, \mathcal{O}_X(m)) \to H^1(Y, \varphi^\ast \mathcal{O}_X(m))$, where $\varphi: Y = \Proj S \to X$ (here the grading of $S$ is multiplied by an integer so that $S$ is $\mathbb{N}$-graded), is annihilated by an element of $S$ of degree $< \eps$.

We say that $\mathcal{O}_X(m)$ is almost zero if every $c \in H^1(X, \mathcal{O}_X(m))$ is almost zero.
\end{Def}

We remark that the definition in \cite{robertssinghannihilators} neither requires $\mathcal{O}_X(1)$ to be invertible nor $R$ to be normal but all applications given there reduce to the invertible case on the ring level. Likewise, we can assume in addition $R$ to be normal by passing to a finite extension domain.

\begin{Prop}
\label{RSSazimpliesours}
Let $R$ be a normal two-dimensional $\mathbb{N}$-graded domain which is finitely generated over an algebraically closed field $R_0$ of characteristic zero such that $\mathcal{O}_{X}(1)$ is invertible, where $X = \Proj R$. If $c \in H^1(X, \mathcal{O}_{X}(m))$ is almost zero in the sense of Definition \ref{RSSalmostzero} then it is also almost zero in the sense of Definition \ref{almostzero}.
\end{Prop}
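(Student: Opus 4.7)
\emph{Proof plan.} The idea is to translate the ring-theoretic ``almost zero'' of Definition \ref{RSSalmostzero} into the geometric one of Definition \ref{almostzero} by reinterpreting a homogeneous $\mathbb{Q}$-graded multiplier $s \in S$ of small degree $d$ as a global section of a genuine line bundle $\mathcal{L}$ on a finite cover of $X$, and then verifying the slope identity $\deg \mathcal{L}/\deg \varphi = d \cdot \deg_X \mathcal{O}_X(1)$. Thus the approach is essentially to make precise the intuition ``rational degree in the extended ring $=$ slope of the corresponding line bundle''.

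Fix $\eps > 0$ and set $\eps' = \eps/\deg_X \mathcal{O}_X(1)$. Applying the RSS hypothesis with $\eps'$, I obtain a finite $\mathbb{Q}$-graded extension of domains $R \subseteq S$ and a nonzero homogeneous element $s \in S$ of degree $d$ with $0 < d < \eps'$ annihilating the image of $c$ in $H^1(Y, \varphi^*\mathcal{O}_X(m))$, where $\varphi: Y = \Proj S \to X$ is the induced finite morphism. Replacing $S$ by its integral closure inside its fraction field (which is still a finite $\mathbb{Q}$-graded extension of $R$, being contained in $R^{+ \text{GR}}$), I may assume $S$ normal; since $\dim S = 2$ and $k$ is algebraically closed, $Y$ is then a smooth projective curve, finite over the smooth curve $X$.

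To realise $s$ as a section of an honest invertible sheaf I plan to combine Lemma \ref{allgPullback} with Proposition \ref{Propringext}. After possibly passing to a further finite cover (and a correspondingly larger $\mathbb{Q}$-graded extension, chosen to clear the denominator of $d$), I may assume that
\[S \;=\; \bigoplus_{m,n \geq 0} \Gamma(Y, \mathcal{M}^m \otimes \varphi^*\mathcal{O}_X(n))/{\sim},\]
where $\mathcal{M}$ is a $k$th root of $\varphi^*\mathcal{O}_X(l)$ on $Y$ and sections of $\mathcal{M}^m \otimes \varphi^*\mathcal{O}_X(n)$ carry degree $ml/k + n$. Then $s$ lies in $\Gamma(Y, \mathcal{L})$ for the specific line bundle $\mathcal{L} = \mathcal{M}^m \otimes \varphi^*\mathcal{O}_X(n)$ singled out by the decomposition $d = ml/k + n$, and the annihilation $s \cdot \varphi^*(c) = 0$ persists under the inclusion into the enlarged ring by functoriality of pullback and of the ring action on cohomology.

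From $\mathcal{M}^k \cong \varphi^*\mathcal{O}_X(l)$ I read off $k \deg \mathcal{M} = l \deg \varphi \cdot \deg_X \mathcal{O}_X(1)$, so that
\[\deg \mathcal{L} \;=\; m \deg \mathcal{M} + n \deg \varphi \cdot \deg_X \mathcal{O}_X(1) \;=\; d \cdot \deg \varphi \cdot \deg_X \mathcal{O}_X(1),\]
and hence $\deg \mathcal{L}/\deg \varphi = d \cdot \deg_X \mathcal{O}_X(1) < \eps$. Since the ring-theoretic action of $s$ on $H^1(Y, \varphi^*\mathcal{O}_X(m))$ coincides with the cohomological map induced by $\cdot s: \mathcal{O}_Y \to \mathcal{L}$, this yields exactly the cover, line bundle and annihilating section required by Definition \ref{almostzero}. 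I expect the hard part to be the third step: arranging the various root and section-ring constructions so that $s$ really does correspond to a global section of an invertible sheaf of the expected degree, while the cohomological annihilation of $\varphi^*(c)$ is preserved; this requires carefully tracking normality, the identification $\varphi^*\mathcal{O}_X(1) = \mathcal{O}_Y(N)$ from Lemma \ref{Pullback}, and the compatibility of the section-ring grading with the original $\mathbb{Q}$-grading on $S$.
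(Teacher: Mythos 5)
Your overall strategy --- reinterpret the small-degree multiplier $s$ as a global section of a line bundle of small slope on a finite cover, and verify $\deg\mathcal{L}/\deg\varphi = d\cdot\deg_X\mathcal{O}_X(1)$ --- is exactly the paper's idea, and your degree bookkeeping at the end is correct. But there is a genuine gap in the middle step, and the lemmas you propose to lean on are not the ones that close it.

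The problem is the sentence ``I may assume that $S = \bigoplus_{m,n}\Gamma(Y,\mathcal{M}^m\otimes\varphi^*\mathcal{O}_X(n))/\sim$''. Proposition \ref{Propringext} and Lemma \ref{allgPullback} run the wrong way: they construct a section ring of that shape as a finite extension of $R$, not a section ring \emph{containing your given $S$} into which $s$ maps as a global section of an identifiable line bundle. An arbitrary finite $\mathbb{Q}$-graded $S$ has no reason to embed degree-preservingly into such a bigraded section ring for a prechosen root $\mathcal{M}$, and you have not produced the embedding. Equivalently: you have not shown that $s$ lies in a degree component which is actually $\Gamma$ of a line bundle on a smooth curve.

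The paper closes this gap by performing the reductions starting from $S$ rather than constructing a section ring abstractly. First use the argument of Lemma \ref{Ldeg} to replace $s$ by an element of degree exactly $\frac1n$. Then (as in Lemma \ref{stdgraded} and the remark after Lemma \ref{Pullback}) enlarge $S$ inside $R^{+\text{GR}}$ so that it is generated by elements of degree $\frac1n$; then regrade by $n$ so that $S$ is $\mathbb{N}$-graded and generated in degree $1$, normalise, and call the result $S'$ with $Y'=\Proj S'$. Now $\Proj$ is covered by $D_+(f)$ with $\deg f=1$, so $\mathcal{O}_{Y'}(1)$ is an honest invertible sheaf, $S'$ is its section ring, and $s$ sits in $\Gamma(Y',\mathcal{O}_{Y'}(1))$; Lemma \ref{Pullback} gives $\varphi^*\mathcal{O}_X(1)=\mathcal{O}_{Y'}(n)$, whence $\deg\mathcal{O}_{Y'}(1)/\deg\varphi = \frac1n\deg_X\mathcal{O}_X(1)$. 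In your language this is exactly the special case $\mathcal{M}=\mathcal{O}_{Y'}(1)$, $k=n$, $l=1$, $m=1$, $n_{\text{student}}=0$; the general bigraded framework is not needed, and in any case cannot be invoked without first carrying out these reductions. You correctly flagged this step as the hard part --- it is, and it is the content that is missing.
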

\begin{proof}

Assume that $c \in H^1(X, \mathcal{O}_X(m))$ is almost zero. This means that for every $n \in \mathbb{N}$ there is a finite extension $R \subseteq S$ such that the image of $c$ under the map $H^1(X, \mathcal{O}_X(m)) \to H^1(Y, \varphi^\ast\mathcal{O}_X(m))$ is annihilated by 
some nonzero homogeneous element $s \in S$ of $\deg s \leq \frac{1}{n}$, where $Y = \Proj S$ and $\varphi: Y \to X$ is induced by the inclusion (here we regrade $S$ before taking the $\Proj$). Since $R$ is normal and of dimension $2$ we have $\Gamma(X, \mathcal{O}_X(n)) = R_n$.
Adjoining suitable roots to $R$ and then normalising again we may assume that $R$ is normal and that $\Proj R$ is covered by standard open sets coming from elements of degree $1$. Furthermore, we replace $S$ by the subring of $R^{+ \text{GR}}$ generated by the elements of $S$ and $R$.

As in the proof of Lemma \ref{Ldeg} we may assume that $\deg s = \frac{1}{n}$. Moreover, applying Lemma \ref{Pullback} we may assume that $S$ is generated by elements of degree $\frac{1}{n}$. Furthermore, $\varphi^\ast \mathcal{O}_X(1) = \mathcal{O}_{Y}(n)$. Consider the normalisation $S'$ of $S$ and denote the induced morphism by $\psi: \Proj S' = Y' \to Y$. The element $s$ then induces a morphism $\mathcal{O}_{Y'} \to \psi^\ast \mathcal{O}_{Y}(1)$ and tensoring with $\psi^\ast \varphi^\ast \mathcal{O}_X(m) = \psi^\ast \mathcal{O}_{Y}(nm)$, and taking cohomology yields a map that annihilates $c$. Furthermore, $\frac{\deg \psi^\ast \mathcal{O}_{Y}(1)}{\deg \varphi \psi} = \frac{\mathcal{O}_{Y}(1)}{\deg \varphi} = \frac{1}{n} \deg \mathcal{O}_X(1)$.
\end{proof}

See Remark \ref{AllesfixAnnulierbar} as to what extend the converse holds.

\begin{Theo}
\label{Lazas}
Let $\mathcal{S}$ be a locally free sheaf on a smooth projective curve $Y$ over an algebraically closed field $k$. Fix a cohomology class $c \in H^1(Y, \mathcal{S})$ which defines an extension $\mathcal{S}'$ of $\mathcal{O}_Y$ by $\mathcal{S}$. Then the following are equivalent:
\begin{enumerate}[(i)]
 \item{The cohomology class $c$ is almost zero.}
\item{For every $\eps >0$ there is a finite $k$-morphism $\varphi: Y' \to Y$ of smooth projective curves, a line bundle $\mathcal{L}$ on $Y'$ such that $\deg \mathcal{L}/\deg \varphi < \eps$ and such that there exists a surjection $\varphi^\ast \mathcal{S}'^\vee \to \mathcal{L}$ which does not factor through $\varphi^\ast \mathcal{S}^\vee$.}
\item{For every $\eps >0$ there is a curve $C$ in $\mathbb{P}(\mathcal{S}'^\vee)$ not contained in $\mathbb{P}(\mathcal{S}^\vee)$ such that $\varphi: C \to Y$ induced by the projection is finite and such that \[\frac{C.\mathbb{P}(\mathcal{S}^\vee)}{\deg \varphi} < \eps.\]} 
\end{enumerate}
\end{Theo}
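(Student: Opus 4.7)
The plan is to handle the triple equivalence in two pieces: (ii) $\Leftrightarrow$ (iii) is a formal translation via the projective bundle construction, while (i) $\Leftrightarrow$ (ii) is the substantive step and goes through the long exact sequence attached to the pulled-back presenting extension twisted by $\mathcal{L}$.

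For (ii) $\Leftrightarrow$ (iii) I would pass between surjections and sections in the usual way. A quotient line bundle $\varphi^\ast\mathcal{S}'^\vee \to \mathcal{L}$ on $Y'$ corresponds to a section $\sigma: Y' \to \mathbb{P}(\varphi^\ast\mathcal{S}'^\vee) = Y' \times_Y \mathbb{P}(\mathcal{S}'^\vee)$ with $\sigma^\ast \mathcal{O}(1) = \mathcal{L}$. Composing $\sigma$ with the projection to $\mathbb{P}(\mathcal{S}'^\vee)$ produces a morphism whose image is a curve $C$, and the induced map $Y' \to C$ is finite of some degree $e$. Factoring through $\varphi^\ast\mathcal{S}^\vee$ is exactly factoring $\sigma$ through $Y' \times_Y \mathbb{P}(\mathcal{S}^\vee)$, so non-factoring translates into $C \not\subseteq \mathbb{P}(\mathcal{S}^\vee)$. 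Because the forcing divisor is cut out by the distinguished section of $\mathcal{O}(1)_{\mathbb{P}(\mathcal{S}'^\vee)}$, standard intersection theory on $Y'$ gives $\deg \mathcal{L} = e \cdot C.\mathbb{P}(\mathcal{S}^\vee)$ and $\deg \varphi = e \cdot \deg(C \to Y)$, so the two ratios coincide. The reverse direction takes $Y'$ to be the normalization of $C$ and pulls back the tautological surjection along $Y' \to C \hookrightarrow \mathbb{P}(\mathcal{S}'^\vee)$.

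For (i) $\Rightarrow$ (ii), I would start with a witness $(\varphi, \mathcal{L}, s)$ of the almost-zero condition. Tensoring the pulled-back presenting extension $0 \to \varphi^\ast \mathcal{S} \to \varphi^\ast \mathcal{S}' \to \mathcal{O}_{Y'} \to 0$ with $\mathcal{L}$ (locally free, hence preserving exactness) produces a long exact sequence whose connecting map $\delta: H^0(Y', \mathcal{L}) \to H^1(Y', \varphi^\ast \mathcal{S} \otimes \mathcal{L})$ satisfies $\delta(s) = s\varphi^\ast(c)$ by the standard Yoneda/multiplication-of-extensions description (this is precisely the way $s\varphi^\ast(c)$ is built in Definition \ref{almostzero}). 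Vanishing of $\delta(s)$ lifts $s$ to a section $t \in H^0(Y', \varphi^\ast \mathcal{S}' \otimes \mathcal{L})$, equivalently a nonzero morphism $\mathcal{L}^{-1} \to \varphi^\ast \mathcal{S}'$ whose composition with $\varphi^\ast \mathcal{S}' \to \mathcal{O}_{Y'}$ is $s$. Saturating $\mathcal{L}^{-1}$ inside $\varphi^\ast \mathcal{S}'$ to a line subbundle $\mathcal{M}$ gives an inclusion $\mathcal{L}^{-1} \hookrightarrow \mathcal{M}$, hence $\deg \mathcal{M} \geq \deg \mathcal{L}^{-1}$ and therefore $\deg \mathcal{M}^{-1}/\deg \varphi \leq \deg \mathcal{L}/\deg \varphi < \eps$. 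The composition $\mathcal{M} \to \varphi^\ast \mathcal{S}' \to \mathcal{O}_{Y'}$ remains nonzero (the restriction to $\mathcal{L}^{-1}$ is $s \neq 0$ and $\mathcal{O}_{Y'}$ is torsion free), so dualising the saturated inclusion yields a surjection $\varphi^\ast \mathcal{S}'^\vee \to \mathcal{M}^{-1}$ that does not factor through $\varphi^\ast \mathcal{S}^\vee$. The converse direction (ii) $\Rightarrow$ (i) runs this argument in reverse: dualise the surjection to an inclusion $\mathcal{L}^{-1} \hookrightarrow \varphi^\ast \mathcal{S}'$ with locally free quotient, read off $s \in H^0(Y', \mathcal{L})$ as the composition with $\varphi^\ast \mathcal{S}' \to \mathcal{O}_{Y'}$, which is nonzero by the non-factoring hypothesis, and invoke exactness to obtain $s\varphi^\ast(c) = \delta(s) = 0$.

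The step I expect to require the most care is the identification $\delta(s) = s\varphi^\ast(c)$, which has to be set up so that both the forward and reverse implications of (i) $\Leftrightarrow$ (ii) become transparent, together with the saturation bookkeeping that must simultaneously preserve the degree bound and the non-factoring condition. The intersection-theoretic step in (ii) $\Leftrightarrow$ (iii) should be routine once one uses that $\mathbb{P}(\mathcal{S}^\vee)$ is the vanishing locus of the canonical section of $\mathcal{O}(1)_{\mathbb{P}(\mathcal{S}'^\vee)}$ coming from the dualised presenting sequence.
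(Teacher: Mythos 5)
Your proposal is correct and follows essentially the same route as the paper's proof: for (i) $\Leftrightarrow$ (ii) both arguments run the long exact sequence of the pulled-back presenting sequence twisted by $\mathcal{L}$, use the identification $\delta(s)=s\varphi^\ast(c)$, and pass to the saturation to obtain a line-bundle quotient while preserving both the degree bound and the non-factoring property; for (ii) $\Leftrightarrow$ (iii) both use the section/quotient dictionary for projective bundles. The only difference is one of explicitness: you write out the factorization through the image curve $C$ and the resulting intersection-number identities $\deg\mathcal{L}=e\cdot C.\mathbb{P}(\mathcal{S}^\vee)$, $\deg\varphi=e\cdot\deg(C\to Y)$ in detail, where the paper instead defers to the proof of the cited slope criterion.
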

\begin{proof}
We prove the implication from (i) to (ii).
For $\eps > 0$ let $\varphi:Y' \to Y$ be a finite morphism of smooth projective curves and $\mathcal{L}$ a line bundle on $Y'$ such that $\deg \mathcal{L}/\deg \varphi < \eps$ and $s$ a nonzero global section of $\mathcal{L}$ with $s \varphi^\ast(c) = 0$. We consider the exact sequence $0 \to \varphi^\ast\mathcal{S} \to \varphi^\ast\mathcal{S}' \to \mathcal{O}_{Y'} \to 0$, tensor with $\mathcal{L}$ and take cohomology. We thus obtain an exact sequence
%\[\begin{xy} \xymatrix{0 \ar[r] & H^0(Y',\varphi^\ast\mathcal{S} \otimes \mathcal{L}) \ar[r] & H^0(Y',\varphi^\ast\mathcal{S}' \otimes \mathcal{L}) \ar[r]^\sigma & H^0(Y', \mathcal{L})\ar[r]^\delta & H^1(Y', \varphi^\ast\mathcal{S} \otimes \mathcal{L})}\end{xy}.\]
\[0 \xrightarrow{} H^0(Y',\varphi^\ast\mathcal{S} \otimes \mathcal{L})\xrightarrow{} H^0(Y',\varphi^\ast\mathcal{S}' \otimes \mathcal{L}) \xrightarrow{} H^0(Y', \mathcal{L})\xrightarrow{\delta} H^1(Y', \varphi^\ast\mathcal{S} \otimes \mathcal{L}).
 \]
Now, since $\delta(s) = s\varphi^\ast(c) = 0$ we have that $s$ stems from a global section in $\varphi^\ast \mathcal{S}' \otimes \mathcal{L}$. This defines a morphism $0 \to \mathcal{L}^\vee \to \varphi^\ast \mathcal{S}'$ which does not factor through $\varphi^\ast \mathcal{S}$ since $s$ is nonzero. Passing to the saturation $\mathcal{L}'^\vee$ of $\mathcal{L}^\vee$ in $\varphi^\ast \mathcal{S}'$ yields a morphism $\varphi^\ast \mathcal{S}'^\vee \to \mathcal{L}' \to 0$ with $\frac{\deg\mathcal{L}}{\deg \varphi} < \eps$ which a fortiori does not factor through $\varphi^\ast \mathcal{S}^\vee$.

For the implication from (ii) to (i) let $\eps >0$. Assume that $\mathcal{L}$ is a line bundle on a smooth curve $\varphi: Y' \to Y$ finite over $Y$ such that $\deg \mathcal{L}/\deg \varphi < \eps$. Furthermore, assume that $\varphi^\ast \mathcal{S}'^\vee \to \mathcal{L}$ is a surjection which does not factor through $\varphi^\ast \mathcal{S}^\vee$. Therefore, we have a nonzero section $s$ in $H^0(Y', \varphi^\ast \mathcal{S}' \otimes \mathcal{L})$ associated to $\varphi^\ast \mathcal{S}'^\vee \to \mathcal{L} \to 0$. Since this morphism does not factor through $\varphi^\ast \mathcal{S}^\vee$ the section $s$ does not stem from $H^0(Y', \varphi^\ast \mathcal{S} \otimes \mathcal{L})$. Again we have an exact sequence \[0 \xrightarrow{} H^0(Y',\varphi^\ast\mathcal{S} \otimes \mathcal{L})\xrightarrow{} H^0(Y',\varphi^\ast\mathcal{S}' \otimes \mathcal{L}) \xrightarrow{\sigma} H^0(Y', \mathcal{L})\xrightarrow{\delta} H^1(Y', \varphi^\ast\mathcal{S} \otimes \mathcal{L}),
 \]
where $\sigma(s) \neq 0$ and $\sigma(s) \cdot \varphi^\ast(c) = \delta (\sigma (s)) = 0$ as desired.

The equivalence of (ii) and (iii) is given by the correspondence described in \cite[Proposition II.7.12]{hartshornealgebraic}. A surjective morphism $\varphi^\ast \mathcal{S}'^\vee \to \mathcal{L}$ corresponds to a morphism $Y' \to \mathbb{P}(\mathcal{S}'^\vee)$ over $Y$ and that it does not factor through $\mathcal{S}^\vee$ means precisely that its image is not contained in $\mathbb{P}(\mathcal{S}^\vee$).
For the rest of the claim assume that $C$ is finite over $Y$. The inclusion $C \to \mathbb{P}(\mathcal{S}'^\vee)$ corresponds to a line bundle $\mathcal{L}$ on $C$. Pulling back to the normalisation $Y'$ of $C$ yields (ii) since $\deg \mathcal{L} = C.\mathbb{P}(\mathcal{S}^\vee)$. %NB Normalisation is birational!
Conversely, assuming (ii) the image of $Y'$ in $\mathbb{P}(\mathcal{S}'^\vee)$ yields $C$.  We refer to the proof of \cite[Theorem 2.3]{brennerslope} for a detailed discussion. 
\end{proof}

\begin{Bem}
\label{BemSupportCond}
The heuristic behind the condition that the curve be not contained in $\mathbb{P}(\mathcal{S}^\vee)$ is that otherwise we could only construct a syzygy whose first term is $0$. Hence, we would obtain a relation $(0, a_1, \ldots, a_n)$ which is not interesting with respect to dagger closure. We will refer to this as the \emph{support condition} in the following.
\end{Bem}

\begin{Bem}
Let $Y$ be a smooth projective curve over an algebraically closed field $k$ and let $0 \to \mathcal{S} \to \mathcal{S}' \to \mathcal{O}_Y \to 0$ be an exact sequence of locally free sheaves. Then there are cases where $\mathcal{S}'^\vee$ is not ample but all the curves that contradict the ampleness in the sense of Seshadri's Theorem (\cite[Theorem I.7.1]{hartshorneamplesubvarieties}) are contained in the support of $\mathbb{P}(\mathcal{S}^\vee)$. Indeed, by \cite[Theorem 2.3]{brennertightplus} and Theorem \ref{HNFaztight} below we find curves contradicting the ampleness that do not lie in the support of $\mathbb{P}(\mathcal{S}^\vee)$ if and only if the complement $\mathbb{P}(\mathcal{S}'^\vee) \setminus \mathbb{P}(\mathcal{S}^\vee)$ is not affine. But $\mathcal{S}'^\vee$ may not be ample and still have affine complement $\mathbb{P}(\mathcal{S}'^\vee) \setminus \mathbb{P}(\mathcal{S}^\vee)$. 

Specifically, let $Y = \mathbb{P}^1_k =\Proj k[x,y]$, $\mathcal{S} = \Syz(x^4, y^4, x^4)(2) = \mathcal{O}_Y \oplus \mathcal{O}_Y(-2)$ and $c = \delta(xy) \in H^1(Y, \mathcal{S})$ so that $\mathcal{S}' = \Syz(x^4, y^4, x^4, xy)$ (cf.\ \cite[Example 7.3]{brennerbarcelona}). The forcing divisor is then not ample since $\mathcal{S}'^\vee$ surjects onto $\mathcal{S}^\vee$, hence has the non-ample quotient $\mathcal{O}_Y$.  But $\mathbb{P}(\mathcal{S}'^\vee) \setminus \mathbb{P}(\mathcal{S}^\vee)$ is affine since $xy \notin (x^4, y^4) =  (x^4, y^4)^{\star}$ as $k[x,y]$ is regular of dimension $2$.   
\end{Bem}

\section{Almost zero and graded dagger closure}

Our next goal is to prove that the notion of almost zero as in Definition \ref{almostzero} characterises dagger closure. The next two lemmata are well-known but the authors are not aware of a suitable reference. Hence, we shall provide proofs.

\begin{Le}
\label{RelDifferentiale}
Let $A$ be a noetherian ring containing a field $k$ of characteristic $p \geq 0$, $a \in A^\times$ and $B = A[T]/(T^n -a )$. If $(\chara k, n) =1 $ then the relative differentials $\Omega_{B/A}$ vanish.
\end{Le}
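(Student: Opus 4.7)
The plan is to compute $\Omega_{B/A}$ directly from the conormal presentation associated to $B = A[T]/(T^n - a)$ and show that the single relation already generates the unit ideal, forcing the module to vanish.

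First I would recall the standard exact sequence for quotients: if $I = (T^n - a) \subseteq A[T]$ then there is a right exact sequence
\[
I/I^2 \xrightarrow{d} \Omega_{A[T]/A} \otimes_{A[T]} B \longrightarrow \Omega_{B/A} \longrightarrow 0.
\]
Since $\Omega_{A[T]/A} = A[T]\, dT$ is free of rank one, tensoring with $B$ yields $B \cdot dT$, and the map $d$ sends $T^n - a \mapsto n T^{n-1} dT$ (using that $a \in A$ is killed by the universal derivation over $A$). Hence
\[
\Omega_{B/A} \;\cong\; B\,dT \,/\, (nT^{n-1}\, dT) \;\cong\; B/(nT^{n-1}).
\]

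Next I would verify that $nT^{n-1}$ is a unit in $B$. The element $T$ is itself a unit in $B$: from $T \cdot T^{n-1} = a$ and $a \in A^\times$, we get $T^{-1} = a^{-1} T^{n-1}$; in particular $T^{n-1} = a T^{-1}$ is a unit. As for $n$, the hypothesis $(\chara k, n) = 1$ means that if $\chara k = p > 0$ then $p \nmid n$, so $n$ is nonzero in the field $k$ and hence invertible there; if $\chara k = 0$ then $\mathbb{Q} \subseteq k$ and again $n$ is invertible in $k$. Since $k \subseteq A \subseteq B$, the integer $n$ is invertible in $B$.

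Therefore $nT^{n-1}$ is a unit in $B$, so $(nT^{n-1}) = B$ and $\Omega_{B/A} = 0$, as claimed. There is no real obstacle here; the only point that requires a moment's care is noting that both $n$ and $T^{n-1}$ are units, which uses the two hypotheses ($\gcd(\chara k, n) = 1$ and $a \in A^\times$) in exactly the right way.
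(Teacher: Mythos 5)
Your proof is correct and follows essentially the same route as the paper: both compute that $\Omega_{B/A}$ is generated by $dT$ subject to the single relation $nT^{n-1}\,dT = 0$, and both observe that $n$ is a unit (from the coprimality hypothesis) and $T^{n-1}$ is a unit (since $T^n = a \in A^\times$). You phrase it via the conormal exact sequence while the paper works directly with $dT$ and $d(T^n) = da = 0$, but this is a presentational difference only.
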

\begin{proof}
To begin with, $\Omega_{B/A}$ is generated by $dT$ so that it is enough to show that $dT = 0$. We have $dT^n = da =0$ and $dT^n = n T^{n-1} dT$. Hence, $T^{n-1} dT = 0$ and since $T^n$ is a unit, so is $T^{n-1}$. Thus $dT = 0$ as desired.
\end{proof}

\begin{Le}
\label{Ltortriv}
Let $Y$ be a smooth projective curve over an algebraically closed field $k$. Denote by $\mathcal{T}$ a torsion element of $\Pic Y$. Then there is a finite morphism $\varphi:Y' \to Y$ such that $\varphi^\ast \mathcal{T} \cong \mathcal{O}_{Y'}$ and such that $Y'$ is smooth.
\end{Le}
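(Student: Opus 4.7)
The plan is to build the cover by trivializing the tame part of $\mathcal{T}$ with a cyclic cover and then killing the inseparable part with iterated Frobenius. Let $n$ denote the order of $\mathcal{T}$ in $\Pic Y$, and write $n = p^a m$ with $\gcd(p,m) = 1$, where $p = \chara k$ (so $a = 0$ in characteristic zero). Set $\mathcal{T}_0 = \mathcal{T}^{p^a}$; this line bundle satisfies $\mathcal{T}_0^m \cong \mathcal{O}_Y$ and its order is coprime to the characteristic.

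First I would fix an isomorphism $\mathcal{T}_0^m \xrightarrow{\sim} \mathcal{O}_Y$ and form the quasi-coherent sheaf of graded $\mathcal{O}_Y$-algebras
\[ \mathcal{A} \;=\; \bigoplus_{i=0}^{m-1} \mathcal{T}_0^{-i}, \]
with the multiplication $\mathcal{T}_0^{-i} \otimes \mathcal{T}_0^{-j} \to \mathcal{T}_0^{-(i+j)}$ given by the obvious maps when $i+j<m$, and by composing with the chosen trivialization $\mathcal{T}_0^{-m} \cong \mathcal{O}_Y$ when $i+j\geq m$. Then $\psi: Y'' := \Spec_Y(\mathcal{A}) \to Y$ is a finite morphism. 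On a small affine $\Spec A \subseteq Y$ with a chosen generator $s$ of $\mathcal{T}_0$, writing $s^m = u \in A^\times$ for the image under the trivialization, the algebra $\mathcal{A}$ becomes $A[T]/(T^m - u^{-1})$, a cyclic extension whose exponent is coprime to $\chara k$. By Lemma \ref{RelDifferentiale} the relative differentials vanish, so $\psi$ is étale; hence $Y''$ is smooth and projective, and by construction $\psi^\ast \mathcal{T}_0 \cong \mathcal{O}_{Y''}$ via the tautological section.

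Next, consider $\psi^\ast\mathcal{T}$ on $Y''$. We compute $(\psi^\ast\mathcal{T})^{p^a} \cong \psi^\ast(\mathcal{T}^{p^a}) = \psi^\ast \mathcal{T}_0 \cong \mathcal{O}_{Y''}$. Since the absolute Frobenius $F: Y'' \to Y''$ pulls a line bundle $\mathcal{M}$ back to $\mathcal{M}^p$, the iterate $F^a: Y'' \to Y''$ is finite and satisfies
\[ (F^a)^\ast \psi^\ast \mathcal{T} \;\cong\; (\psi^\ast \mathcal{T})^{p^a} \;\cong\; \mathcal{O}_{Y''}. \]
(In characteristic zero this step is vacuous.) Setting $\varphi = \psi \circ F^a : Y'' \to Y$ gives a finite morphism with $\varphi^\ast \mathcal{T} \cong \mathcal{O}_{Y''}$ and $Y''$ smooth projective. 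If $Y''$ is disconnected, we replace it with any connected component, which is still smooth projective and maps surjectively (hence finitely) onto $Y$.

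I do not expect any real obstacle here; the only delicate point is the positive characteristic case when $p \mid n$, where a naive cyclic cover of degree $n$ would fail to be smooth (and Lemma \ref{RelDifferentiale} would not apply). The trick is the standard one: the tame part is killed by the étale cyclic cover furnished by Lemma \ref{RelDifferentiale}, and the $p$-primary part is killed by iterating the absolute Frobenius, which is a finite morphism of smooth schemes and raises every line bundle to the $p$-th power.
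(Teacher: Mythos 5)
Your proof is correct and takes essentially the same approach as the paper: decompose the torsion order as $n = p^a m$ with $p \nmid m$, kill the tame part by an \'etale cyclic cover $\Spec_Y(\bigoplus \mathcal{T}_0^{\pm i})$ (smooth by Lemma \ref{RelDifferentiale}), and kill the $p$-primary part by iterating the Frobenius. The only cosmetic difference is the order of the two steps --- the paper applies the Frobenius pullback first and then the cyclic cover, whereas you do the cyclic cover first and then the Frobenius --- which is immaterial.
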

\begin{proof}
See also \cite[Ex.\ IV.2.7]{hartshornealgebraic}.
Denote the order of $\mathcal{T}$ by $n$ and consider the coherent $\mathcal{O}_Y$-algebra $\mathcal{A} = \bigoplus_{i=0}^{n-1} \mathcal{T}^i$, where the multiplication is induced by a fixed isomorphism $\alpha: \mathcal{T}^n \to \mathcal{O}_Y$. First, we show that $\psi: \Spec \mathcal{A} \to Y$ is finite. The morphism is affine by \cite[Ex.\ II.5.17]{hartshornealgebraic}. Take an open affine cover $U_j$ of $Y$ such that the $\mathcal{T}\vert_{U_j}$ are free. It follows that the $\mathcal{T}^i\vert_{U_j}$ are also free and $\mathcal{O}_{\psi^{-1}(U_j)} = \mathcal{A}(U_j)$ is a free $\mathcal{O}_{U_j}$-module of rank $n$. %Das ist auch auf hoehren Potenzen frei, weils affin ist und da tensorieren genau das was man naiv erwartet.
Next, we show that $\psi^{\ast} \mathcal{T} = \mathcal{O}_{Y'}$. One has $(\psi^{\ast} \mathcal{T})_U = \mathcal{A}_U \otimes_{\mathcal{O}_U} \mathcal{T} = \mathcal{A}_U$ and clearly these isomorphisms glue.
%Wir restringieren von U nach V. Dann hat man  (t_1, \ldots, t_n) \otimes t und der Isom schickt das auf (t t_1, \ldots, t t_n) die Restriktion schickt das dann auf (\rho(t_1 t), \ldots, \rho(t_n t)). Links gehts nach unten durch die Abbildung \rho \otimes \rho (und das ist nix anderes als nen \rho im t_{i+1}) und dann rueber durch (\rho(t) \rho(t_i))

Note that if $(\chara k, n) = 1$ then $\Spec \mathcal{A} = X$ is \'etale over $Y$.
 Indeed, locally over $U = U_j$ we have that $\mathcal{T}_U$ is generated by some element $t$. We therefore may identify $\mathcal{O}_{\psi^{-1}(U)}$ with $\mathcal{O}_{U}[T]/(T^n -\alpha(t^n))$. Applying \cite[Ex.\ III.10.3]{hartshornealgebraic} it is enough to observe that $\Omega_{X/Y} = 0$ by Lemma \ref{RelDifferentiale}, that the extension over the stalks is separable since $(\chara k, n) =1$ and that the morphism is flat by \cite[Proposition 4.1 (b)]{Eisenbud}. Pulling back to an irreducible component $Y'$ which dominates $Y$ (in fact, every component dominates $Y$) we have the desired morphism. %Dominanz folgt aus Funktionenkoerpern!

If the characteristic of the base divides $n$ we can write $n = p^e m$ with $p \nmid m$. Then we consider a Frobenius pullback $F^e: Y' \to Y$ and thus have $(F^e)^{\ast}\mathcal{T} = \mathcal{T}^{p^e}$. Note that $Y'$ is smooth and that $\mathcal{T}^{p^e}$ is an $m$-torsion element so that we may apply the previous argument.
\end{proof}

\begin{Ko}
\label{globallygeneratedafterpullback}
Let $Y$ be a smooth projective curve over an algebraically closed field and let $\mathcal{L}$ be an ample line bundle on $Y$. Then there is a finite morphism $\varphi: Y' \to Y$ of smooth projective curves such that $\varphi^\ast \mathcal{L}$ is generated by global sections.
\end{Ko}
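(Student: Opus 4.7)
The plan is to combine Lemma \ref{allgPullback}, which produces a cover on which the pullback of $\mathcal{L}$ becomes, up to a torsion factor, a root of a globally generated bundle, with Lemma \ref{Ltortriv}, which kills line bundle torsion after a further finite cover.

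First, since $\mathcal{L}$ is ample on $Y$, a sufficiently large power $\mathcal{L}^{m+1}$ is globally generated; I would fix such an $m$. I then apply Lemma \ref{allgPullback} with $X = Y$, $\mathcal{O}(1) = \mathcal{L}$, the specified line bundle also equal to $\mathcal{L}$, and $N = m+1$. This yields a finite surjective morphism $f : Y_1 \to Y$ from a normal projective variety together with a globally generated line bundle $\mathcal{M}$ on $Y_1$ satisfying $f^\ast \mathcal{L}^{m+1} \cong \mathcal{M}^{m+1}$. After replacing $Y_1$ by an irreducible component dominating $Y$ if necessary, we may assume $Y_1$ is integral; as a one-dimensional normal projective variety it is then automatically smooth.

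Next, the relation $f^\ast \mathcal{L}^{m+1} \cong \mathcal{M}^{m+1}$ says exactly that $\mathcal{T} := f^\ast \mathcal{L} \otimes \mathcal{M}^{-1}$ is an $(m+1)$-torsion element of $\Pic(Y_1)$. I would then invoke Lemma \ref{Ltortriv} to obtain a finite morphism $\psi : Y' \to Y_1$ with $Y'$ smooth and $\psi^\ast \mathcal{T} \cong \mathcal{O}_{Y'}$. Setting $\varphi = f \circ \psi$, one then has $\varphi^\ast \mathcal{L} \cong \psi^\ast \mathcal{M}$, and since global generation is preserved under pullback (the pullback of the surjection $\Gamma(Y_1,\mathcal{M}) \otimes \mathcal{O}_{Y_1} \to \mathcal{M}$ remains surjective, and its image consists of sections of $\psi^\ast \mathcal{M}$), $\varphi^\ast \mathcal{L}$ is globally generated.

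The main potential obstacle here is really only the bookkeeping around irreducibility: one must confirm that the cover produced by Lemma \ref{allgPullback} may be taken irreducible so that, being a normal curve, it is smooth. This is handled by restricting the finite surjective morphism to a single irreducible component mapping dominantly onto $Y$, which preserves finiteness, surjectivity, and the isomorphism of pulled-back line bundles; the substantive work is entirely packaged inside the two lemmata invoked.
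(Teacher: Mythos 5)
Your proof is correct and follows essentially the same route as the paper: apply Lemma \ref{allgPullback} with $\mathcal{O}(1) = \mathcal{L}$ to express a pullback of a power of $\mathcal{L}$ as a power of a globally generated bundle, observe that the discrepancy is a torsion element of the Picard group, and kill it with Lemma \ref{Ltortriv}. The extra bookkeeping you add (taking an irreducible dominating component so the normal curve is smooth, and being explicit about $m+1$ versus $N$) is a reasonable tidying of steps the paper leaves implicit rather than a different argument.
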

\begin{proof}
Applying Lemma \ref{allgPullback} to $\mathcal{L}$ with $\mathcal{O}(1) = \mathcal{L}$ (and $N =m$) we have that after a finite pullback $\psi: X \to Y$ there is a line bundle $\mathcal{M}$ on $X$ which is generated by global sections such that $\psi^\ast \mathcal{L}^m = \mathcal{M}^m$. Hence, $\psi^\ast \mathcal{L} \otimes \mathcal{T} = \mathcal{M}$ where $\mathcal{T}$ is an $m$-torsion element of the Picard group. Applying Lemma \ref{Ltortriv} we are done.
\end{proof}

\begin{Le}
\label{Pullbacksupport}
Let $\varphi: Y' \to Y$ be a finite morphism of smooth projective curves over an algebraically closed field, $\mathcal{S}$ a locally free sheaf on $Y$ and $\mathcal{S}'$ an extension corresponding to $c \in H^1(Y,\mathcal{S})$. If $\mathcal{L}$ is a line bundle on $Y$ and $\mathcal{S}'^\vee \to \mathcal{L} \to 0$ a surjection which does not factor through $\mathcal{S}^\vee$, then $\varphi^\ast \mathcal{S}'^\vee \to \varphi^\ast \mathcal{L} \to 0$ does not factor through $\varphi^\ast \mathcal{S}^\vee$.
\end{Le}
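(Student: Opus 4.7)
The approach is to rephrase the non-factoring condition as the non-vanishing of a global section of $\mathcal{L}$, and then observe that such a section remains nonzero after pullback along a dominant morphism.

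First I would dualise the defining extension $0 \to \mathcal{S} \to \mathcal{S}' \to \mathcal{O}_Y \to 0$ to obtain the short exact sequence
\[
0 \longrightarrow \mathcal{O}_Y \longrightarrow \mathcal{S}'^\vee \longrightarrow \mathcal{S}^\vee \longrightarrow 0.
\]
Composing the inclusion $\mathcal{O}_Y \hookrightarrow \mathcal{S}'^\vee$ with the given surjection $\pi\colon \mathcal{S}'^\vee \to \mathcal{L}$ yields a morphism $s\colon \mathcal{O}_Y \to \mathcal{L}$, i.e.\ a global section $s \in H^0(Y, \mathcal{L})$. By the universal property of the cokernel, $\pi$ factors through $\mathcal{S}^\vee$ if and only if $s = 0$. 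Since by hypothesis $\pi$ does not factor, we conclude $s \neq 0$.

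Next I would apply $\varphi^\ast$ to the dualised sequence. As $\mathcal{S}$ and $\mathcal{S}'$ are locally free, pullback is exact, so we obtain
\[
0 \longrightarrow \mathcal{O}_{Y'} \longrightarrow \varphi^\ast \mathcal{S}'^\vee \longrightarrow \varphi^\ast \mathcal{S}^\vee \longrightarrow 0,
\]
and the composition $\mathcal{O}_{Y'} \to \varphi^\ast \mathcal{S}'^\vee \xrightarrow{\varphi^\ast \pi} \varphi^\ast \mathcal{L}$ is precisely $\varphi^\ast s$. By the same universal property argument as above, $\varphi^\ast \pi$ factors through $\varphi^\ast \mathcal{S}^\vee$ if and only if $\varphi^\ast s = 0$. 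Hence the task reduces to showing $\varphi^\ast s \neq 0$.

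Finally, the nonzero section $s$ has vanishing locus $Z(s) \subsetneq Y$ a proper closed subset (in fact a finite set of points). Since $\varphi$ is finite and dominant between irreducible smooth projective curves it is surjective, hence $Y \setminus \varphi^{-1}(Z(s))$ is nonempty and open in $Y'$; at any point $y' \in Y'$ with $\varphi(y') = y \notin Z(s)$ the stalk map induced by $\varphi^\ast s$ is the pullback of the nonzero stalk map $\mathcal{O}_{Y,y} \to \mathcal{L}_y$ induced by $s$, and therefore nonzero. Thus $\varphi^\ast s \neq 0$, which finishes the proof. There is no genuine obstacle here: the lemma is mostly a bookkeeping statement, and the only point that requires care is identifying the non-factoring condition with non-vanishing of a section on the dualised side.
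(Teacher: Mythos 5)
Your proof is correct and follows essentially the same route as the paper: both translate the non-factoring hypothesis into the non-vanishing of the induced section $s \in H^0(Y,\mathcal{L})$ and then observe that a nonzero section of a line bundle stays nonzero after pullback along a finite dominant morphism. The paper frames this in terms of the untwisted sequence tensored with $\mathcal{L}$ and injectivity of the pullback maps on global sections, while you invoke the universal property of the cokernel on the dualised sequence and argue geometrically via vanishing loci; these are cosmetic variations of one argument.
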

\begin{proof}
Note that we have an exact sequence $0 \to \mathcal{S} \to \mathcal{S}' \to \mathcal{O}_Y \to 0$ and hence an inclusion $H^0(Y, \mathcal{S}) \subseteq H^0(Y, \mathcal{S}')$.
The exact sequence $\mathcal{S}'^\vee \to \mathcal{L} \to 0$ yields after dualising and tensoring with $\mathcal{L}$ a global section $s \in H^0(Y, \mathcal{S}' \otimes \mathcal{L})$ which is not contained in $H^0(Y, \mathcal{S} \otimes \mathcal{L})$.
We have a commutative diagram \[
\begin{xy} \xymatrix{0 \ar[r] & H^0(Y, \mathcal{S} \otimes \mathcal{L}) \ar[r] \ar[d] & H^0(Y, \mathcal{S' \otimes \mathcal{L}}) \ar[d] \ar[r] & H^0(Y, \mathcal{L}) \ar[d]  \\ 0 \ar[r]  & H^0(Y', \varphi^\ast(\mathcal{S} \otimes \mathcal{L})) \ar[r] & H^0(Y', \varphi^\ast(\mathcal{S' \otimes \mathcal{L}})) \ar[r] & H^0(Y', \varphi^\ast \mathcal{L}) }\end{xy}\]
where the vertical arrows are injective (locally this is just a base change $R^n \to R^n \otimes_R S$ whence the injectivity). % $\varphi^\ast$ is $k$-linear hence not injective means $\varphi^ast(s) = 0$ for some $s \neq 0$. This $s$ corresponds to a morphism $0 \to \mathcal{O} \to \mathcal{S}$ and pullback is exact since everything is locally free.
Since the rightmost vertical arrow is injective $\varphi^\ast(s)$ cannot lie in $H^0(Y', \varphi^\ast( \mathcal{S}\otimes \mathcal{L}))$. And this means that the surjection still does not factor.
\end{proof}

For future reference we fix the following

\begin{Sit}
\label{Sit1}
Let $R$ be a normal standard graded two-dimensional integral $k$-algebra, where $k$ is an algebraically closed field of arbitrary characteristic. Moreover, let $I = (f_1, \ldots, f_n)$ be an $R_+$-primary homogeneous ideal, where the $f_i$ are homogeneous of degrees $d_i$. Write $Y = \Proj R$ and fix a homogeneous element $f_0$ in $R$ of degree $d_0$. Write $\mathcal{S} = \Syz(f_1, \ldots, f_n)(d_0)$.
\end{Sit}

\begin{Theo}
\label{almostzerodagger}
Assume Situation \ref{Sit1}. Then $c = \delta(f_0) \in H^1(Y,\mathcal{S})$ is almost zero if and only if $f_0$ is contained in $I^{\dagger \text{GR}}$.
\end{Theo}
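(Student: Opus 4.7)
Plan: I will translate between the algebraic side (a relation $af_0=\sum a_i f_i$ in $R^{+\text{GR}}$) and the geometric side (a nonzero section $s$ of a line bundle $\mathcal{L}$ on a finite cover $\varphi\colon Y'\to Y$ that annihilates $\varphi^\ast c$). The two engines are Lemma \ref{Pullback}, which converts a graded extension of $R$ into a morphism of $\Proj$'s with control over $\mathcal{O}(1)$, and Proposition \ref{Propringext}, which assembles a finite graded extension of $R$ from sections of a line bundle and pullbacks of $\mathcal{O}_Y(n)$.

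For the direction ``almost zero $\Rightarrow f_0\in I^{\dagger\text{GR}}$'', fix $\eps>0$ and use almost zero to produce $\varphi\colon Y'\to Y$, $\mathcal{L}$ and $s\neq 0$ with $s\varphi^\ast c=0$ and $\deg\mathcal{L}/\deg\varphi<\eps\cdot\deg\mathcal{O}_Y(1)$. If $\deg\mathcal{L}=0$ then $\mathcal{L}\cong\mathcal{O}_{Y'}$, hence $\varphi^\ast c=0$ and the section ring $\bigoplus_n H^0(Y',\varphi^\ast\mathcal{O}_Y(n))$, which by Lemma \ref{Leholger} is a finite graded extension of $R$ inside $R^{+\text{GR}}$, already contains a lift $f_0=\sum b_i f_i$, so $f_0\in IR^{+\text{GR}}$ is trivial. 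If $\deg\mathcal{L}>0$, then $\mathcal{L}^{\deg\varphi\cdot\deg\mathcal{O}_Y(1)}$ and $\varphi^\ast\mathcal{O}_Y(\deg\mathcal{L})$ share their degree, so differ by a torsion element of $\Pic(Y')$; Lemma \ref{Ltortriv} kills this torsion after a further finite pullback $\psi\colon Y''\to Y'$, yielding integers $k,l>0$ with $(\psi^\ast\mathcal{L})^k\cong(\varphi\psi)^\ast\mathcal{O}_Y(l)$ and $l/k=\deg\mathcal{L}/(\deg\varphi\cdot\deg\mathcal{O}_Y(1))$. Proposition \ref{Propringext} then produces a finite $\mathbb{Q}_{\geq 0}$-graded extension $S$ of $R$ in which $\psi^\ast s$ is homogeneous of degree $l/k$; pulling $s\varphi^\ast c=0$ back by $\psi$ and chasing through the $\psi^\ast\mathcal{L}$-twisted presenting sequence yields $b_i\in S$ with $(\psi^\ast s)\,f_0=\sum b_i f_i$. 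Embedding $S\hookrightarrow R^{+\text{GR}}$ as a graded subring exhibits $\psi^\ast s$ as a multiplier of $\nu$-value $l/k$, and after absorbing $\deg\mathcal{O}_Y(1)$ into the initial $\eps$ we conclude $f_0\in I^{\dagger\text{GR}}$.

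For the direction ``$f_0\in I^{\dagger\text{GR}}\Rightarrow c$ almost zero'', fix $\eps>0$ and a relation $af_0=\sum a_i f_i$ with $a\in R^{+\text{GR}}$ homogeneous and $\nu(a)<\eps$. Choose a finite normal $\mathbb{Q}_{\geq 0}$-graded extension $A\subseteq R^{+\text{GR}}$ containing $a$ and the $a_i$ together with $N$-th roots of finitely many degree-one elements of $R$ whose basic open sets cover $Y$, where $N$ is a common denominator for $\nu(a)$ and the grading of $A$. After rescaling degrees by $N$, the ring $\tilde A$ is $\mathbb{N}$-graded, normal, two-dimensional, and $\Proj\tilde A$ is covered by $D_+(g)$ with $g\in\tilde A_1$; so $Y':=\Proj\tilde A$ is a smooth projective curve, $\mathcal{O}_{Y'}(1)$ is a line bundle, and Lemma \ref{Pullback} gives $\varphi\colon Y'\to Y$ with $\varphi^\ast\mathcal{O}_Y(1)\cong\mathcal{O}_{Y'}(N)$. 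The element $a$, of $\tilde A$-degree $N\nu(a)$, is then a nonzero section of $\mathcal{L}:=\mathcal{O}_{Y'}(N\nu(a))$. Since $\tilde A$ is normal of dimension two one has $\tilde A_d=H^0(Y',\mathcal{O}_{Y'}(d))$, so the $a_i$ supply a global-section lift of $af_0$ through the $\mathcal{L}$-twist of the presenting sequence of $\mathcal{S}$; this is exactly $s\varphi^\ast c=0$ with $s=a$. Since $\deg\mathcal{L}/\deg\varphi=\nu(a)\cdot\deg\mathcal{O}_Y(1)$, absorbing this multiplicative constant into $\eps$ delivers almost zero.

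The main obstacle is the bookkeeping around the $\mathbb{Q}$-grading. In the $\Leftarrow$ direction one must adjoin enough roots of degree-one elements of $R$ before rescaling, so that the resulting ring admits $\mathcal{O}(1)$ as a genuine line bundle; otherwise $a$ cannot be interpreted as a section. In the $\Rightarrow$ direction the delicate step is the reduction to $\mathcal{L}^k\cong\varphi^\ast\mathcal{O}_Y(l)$, without which Proposition \ref{Propringext} does not apply and one loses quantitative control over the degree ratio. Both issues are handled by the preparatory lemmata together with Lemma \ref{Ltortriv}, but keeping the identifications of line bundles compatible under pullback and normalisation is where care is required.
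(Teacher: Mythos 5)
Your argument for the direction $f_0\in I^{\dagger\text{GR}}\Rightarrow c$ almost zero is essentially the one in the paper and is correct. For the converse you deviate from the paper in an interesting way: rather than invoking Theorem~\ref{Lazas} to pass to a sub-line-bundle $\mathcal{L}^\vee(t)\hookrightarrow\varphi^\ast\mathcal{S}'(t)$ and then reading syzygies off as elements of the plain section ring $T$ of $\mathcal{O}_{Y''}(1)$ (with the support condition needed at the very end to guarantee $a_0\neq 0$), you chase the $\mathcal{L}$-twisted presenting sequence of $\mathcal{S}$ directly and take $s\in H^0(Y',\mathcal{L})$ itself as the small multiplier. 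This is conceptually cleaner -- the multiplier $s$ is nonzero by hypothesis, so the support condition comes for free -- but it forces you to interpret $s$ and the lift $(\tilde b_i)$ as honest homogeneous elements of a finite graded extension of $R$, which requires $\mathcal{L}$ to be commensurable with $\varphi^\ast\mathcal{O}_Y(1)$.

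Here is the gap. You write that $\mathcal{L}^{\deg\varphi\cdot\deg\mathcal{O}_Y(1)}$ and $\varphi^\ast\mathcal{O}_Y(\deg\mathcal{L})$ ``share their degree, so differ by a torsion element of $\Pic(Y')$.'' This is false: two line bundles of the same degree differ by an arbitrary element of $\Pic^0(Y')$, and for $g(Y')\geq 1$ this is a positive-dimensional abelian variety with only countably many torsion points. Lemma~\ref{Ltortriv} therefore does not apply, you never obtain $\mathcal{L}^k\cong(\varphi\psi)^\ast\mathcal{O}_Y(l)$, and the hypothesis of Proposition~\ref{Propringext} fails for a generic $\mathcal{L}$, so the ring $S$ you rely on does not exist as claimed; a further finite pullback does not help either, since the norm map shows that a non-torsion element of $\Pic^0$ never becomes torsion under pullback. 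The paper's route sidesteps this entirely because the syzygies land in $T=\bigoplus_n\Gamma(Y'',\mathcal{O}_{Y''}(n))$ via the inclusion $\mathcal{L}^\vee(t)\hookrightarrow(fg)^\ast\mathcal{S}'(t)\hookrightarrow\bigoplus\mathcal{O}_{Y''}(\ast)$, so no commensurability constraint on $\mathcal{L}$ is ever imposed. To salvage your approach you would need to enlarge $\mathcal{L}$ to $\mathcal{L}(D)$ for a suitable effective $D$ and apply Lemma~\ref{allgPullback} to turn $\mathcal{L}^{-a}\otimes\varphi^\ast\mathcal{O}_Y(b)$ into the $a$-th power of a globally generated bundle after a further finite pullback, choosing $a\gg 0$ so that $\deg(\mathcal{L}(D))/\deg\varphi<\eps$ is preserved; that extra work is exactly what your torsion claim elides, and it is the price of avoiding the support condition.
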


\begin{proof}
Assume that $f_0 \in I^{\dagger \text{GR}}$. Thus by Lemma \ref{Ldeg}, we find for $m \in \mathbb{N}$ a nonzero element $a_m$ of degree $\frac{1}{m}$ such that $a_m f_0 \in IS$, where $S$ is a finite $\mathbb{Q}$-graded extension domain of $R$. Note that we may assume that the minimal degree of $S$ is $\frac{1}{m}$ -- writing $a_m f_0 = \sum_i a_{im} f_i$ one necessarily has $d_0 \geq \min\{d_1, \ldots, d_n\}$. Hence, all relevant data are contained in a finite ring extension $R[a_m, a_{1m}, \ldots, a_{nm}] \subseteq R^{+ \text{GR}}$ whose minimal degree is $\frac{1}{m}$. Furthermore, we may assume that $S$ is normal and that $Y' = \Proj S$ is covered by finitely many standard open sets coming from elements of degree $1$ after regrading. Moreover, we may assume that elements of degree $1$ in $R$ are of degree $m$ in the regraded copy of $S$. To accomplish this adjoin roots to $S$ so that $S$ is generated in degree $\frac{1}{m}$ (Lemma \ref{stdgraded}) then normalise and regrade by multiplying the grading by $m$ so that $S$ is $\mathbb{N}$-graded. Thus we have all the desired properties and $S(1)^\sim$ is invertible.

Pulling back the whole situation to $Y' = \Proj S$ we therefore have that $a_m \delta(f_0) = 0$ since $a_m f_0$ is contained in the extended ideal. The multiplication by $a_m$ is induced by a morphism $\mathcal{O}_{Y'} \to \mathcal{O}_{Y'}(1)$. Now one can argue as in the proof of Proposition \ref{RSSazimpliesours}. Indeed, tensoring $\mathcal{O}_{Y'} \to \mathcal{O}_{Y'}(1)$ with $\mathcal{S}$ and taking cohomology yields that $H^1(Y', \varphi^\ast \mathcal{S}) \to H^1(Y', \varphi^\ast \mathcal{S} \otimes \mathcal{O}_{Y'}(1))$ maps $\delta(f_0)$ to $a_m \delta(f_0) = 0$, where $\varphi: Y' \to Y$ is induced by the inclusion of rings. Moreover, $\frac{\deg \mathcal{O}_{Y'}(1)}{\deg \varphi} = \frac{1}{m} \deg \mathcal{O}_Y(1)$.

As to the other implication, Theorem \ref{Lazas} implies that for $\eps > 0$ there is a line bundle $\mathcal{L}$ on some smooth curve $f: Y' \to Y$ finite over $Y$ such that $\frac{\deg \mathcal{L}}{\deg f} < \eps$ and an exact sequence $f^\ast \mathcal{S}'^\vee \to \mathcal{L} \to 0$ which does not factor through $f^\ast\mathcal{S}^\vee$.

Consider the ring $S = \bigoplus_{n \geq 0} \Gamma(Y', f^\ast \mathcal{O}_Y(n))$ -- this is a finite $\mathbb{N}$-graded extension of $R$ by Proposition \ref{Leholger}. Moreover, since $\mathcal{O}_Y(1)$ is generated by global sections (it is even very ample) the line bundle $f^\ast \mathcal{O}_Y(1)$ is also generated by global sections. Hence, Proposition \ref{PCover} implies that $\Proj S$ is covered by finitely many standard open sets coming from elements $s_1, \ldots, s_l$ of degree $1$ in $S$. This in turn allows us to apply Lemma \ref{Pullback} and to adjoin $m$th roots of the $s_1, \ldots, s_l$ to obtain rings $S'_m$ that are still covered in degree $1$ after regrading. Varying $m$ we obtain finite morphisms $g_m: Y''_m \to Y'$, where $Y''_m = \Proj S'_m$ so that $\deg \mathcal{O}_{Y''_m}(1)/ \deg (fg_m)^\ast \mathcal{O}_Y(1) = \frac{1}{m}$.

In order to obtain syzygies we fix an $m \gg 0$ and omit the index. We have that $\deg g^\ast \mathcal{L}^\vee/ \deg (fg)$ is rational and independent of $g$. Hence, choosing $m$ sufficiently large we can find a positive integer $t-1$ such that $\frac{t-1}{m} \deg \mathcal{O}_Y(1) = \deg g^\ast \mathcal{L}/ \deg (fg)$. Possibly choosing a larger $m$ (and then a different $t$) we may moreover assume that $\frac{t}{m} \deg \mathcal{O}_Y(1) < \eps$.
Now consider $g^\ast \mathcal{L}^\vee$ and twist by $\mathcal{O}_{Y''}(t)$ for a $t$ as above so that \[0 < \frac{\deg g^\ast \mathcal{L}^\vee(t)}{\deg (fg)} < \eps.\] Applying Corollary \ref{globallygeneratedafterpullback} we may assume that $g^\ast \mathcal{L}^\vee(t)$ is generated by global sections after a finite pullback and pulling back again if necessary we may assume that $Y''$ is smooth. By Proposition \ref{Propringext} we have a finite extension of graded domains $R \subseteq \bigoplus_{n \geq 0} \Gamma(Y'', \mathcal{O}_{Y''}(n)) = T$ and an exact sequence \[ \begin{xy} \xymatrix{0 \ar[r] & g^\ast \mathcal{L}^{\vee}(t) \ar[r] & (fg)^\ast\mathcal{S}'(t).}
\end{xy}
\]

Recalling the presenting sequence for the syzygy bundle we see that global sections of $g^\ast \mathcal{L}^\vee(t)$ define syzygies of total degree $m d_0 + t$ on $\Proj T$. We therefore have syzygies $(a_0, \ldots, a_n)$ corresponding to relations $-a_0 f_0 = \sum_i a_i f_i$, where $\deg a_0 = \frac{t}{m} < \eps$ and we still need to make sure that we find a syzygy with $a_0 \neq 0$. But otherwise $g^\ast \mathcal{L}^\vee(t)$ would factor through $((fg)^\ast \Syz(f_1, \ldots, f_n)(d_0))(t)$ since it is generated by global sections. Twisting by $\mathcal{O}_{Y''}(-t)$ and applying Lemma \ref{Pullbacksupport} would imply that $\mathcal{L}$ factored as well contradicting our assumptions.
\end{proof}

%\begin{Bem}
%\label{Rbla}
%\begin{enumerate}[(a)]
% \item{In positive characteristic we can avoid Corollary \ref{globallygeneratedafterpullback}. One can simply pull back by a sufficiently high power of the $k$-linear Frobenius in order to make $\deg g^\ast\mathcal{L}^{\vee}(t)$ larger than $2g - 2$ without changing $\deg g^\ast \mathcal{L}^{\vee}(t)/\deg (gf)$.} 
%\item{On an elliptic curve this is also easier since any line bundle of degree $\geq 2$ is generated by global sections. And this can be accomplished by pulling back along the multiplication by $N$ map for some integer $N \geq 2$.}

%\item{Also note that both on an elliptic curve and in positive characteristic this yields together with Proposition \ref{Lantisupp} the converse inclusion of Corollary \ref{Inclusion1sst} if we assume Situation \ref{Sit1} and also suppose that $\Syz(f_1, \ldots, f_n)$ is strongly semistable.}
%\end{enumerate}
%\end{Bem}

%One possibly has to make the ring situation more explicit here. Because the situation is more favorable in the previous proposition. So one should maybe make an explicit reduction to this situation. (i.e. Pull back such that the thing is generated by global sections...)
\begin{Prop}
\label{PConversealmostzero}
Let $Y$ be a smooth projective curve over an algebraically closed field. Fix an ample line bundle $\mathcal{O}_Y(1)$ and let $\mathcal{S}$ be a locally free sheaf of rank $r$ such that $\det \mathcal{S} = \mathcal{O}_Y(n)$ for some integer $n$. If $\mathcal{S}$ is almost zero then one can choose the annihilating line bundles as suitable roots of $\mathcal{O}_Y(1)$.% Hence, it is almost zero in the sense of Definition \ref{RSSalmostzero}.
\end{Prop}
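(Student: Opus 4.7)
The plan is to reduce to the setting of Theorem~\ref{almostzerodagger} via a syzygy presentation, deduce membership in graded dagger closure, and then transport the resulting small-degree multipliers back to obtain roots of $\mathcal{O}_Y(1)$ as annihilating line bundles, using the construction in the first direction of that theorem. Let $R = \bigoplus_{l \geq 0} \Gamma(Y, \mathcal{O}_Y(l))$ be the section ring. By Proposition~\ref{Syzdet}, for $m$ sufficiently large there is a presenting sequence $0 \to \mathcal{S} \to \mathcal{O}_Y(m)^{r+1} \to \mathcal{O}_Y(m(r+1) - n) \to 0$ realising $\mathcal{S} = \Syz(f_1, \ldots, f_{r+1})(d_0)$ with $d_0 = m(r+1) - n$ and $f_i \in R_{mr-n}$. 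Surjectivity of the sheaf map forces the $f_i$ to have no common zero on $Y$, so $(f_1, \ldots, f_{r+1})$ is $R_+$-primary, and by enlarging $m$ we may invoke Serre vanishing to make the connecting homomorphism surjective, so any fixed $c \in H^1(Y, \mathcal{S})$ has the form $c = \delta(f_0)$ for some $f_0 \in R_{d_0}$.

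Since $\mathcal{S}$ is almost zero so is $c$, and Theorem~\ref{almostzerodagger} yields $f_0 \in I^{\dagger \text{GR}}$ with $I = (f_1, \ldots, f_{r+1})$. Lemma~\ref{Ldeg} then furnishes, for every $\ell \in \mathbb{N}$, a homogeneous $a_\ell \in R^{+ \text{GR}}$ with $\nu(a_\ell) = 1/\ell$ and $a_\ell f_0 \in IR^{+ \text{GR}}$. Following the ``if'' direction in the proof of Theorem~\ref{almostzerodagger}, the element $a_\ell$ lies in some finite normal graded extension $S_\ell \subseteq R^{+ \text{GR}}$ of minimal degree $1/\ell$; after regrading $S_\ell$ by the factor $\ell$ and, if needed, adjoining further roots via Lemma~\ref{stdgraded}, we may assume $S_\ell$ is standard graded with $\Proj S_\ell$ covered by standard opens coming from degree-$1$ elements. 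For the induced finite morphism $\varphi_\ell \colon Y'_\ell := \Proj S_\ell \to Y$, Lemma~\ref{Pullback} gives $\varphi_\ell^\ast \mathcal{O}_Y(1) \cong \mathcal{O}_{Y'_\ell}(\ell)$, so $\mathcal{O}_{Y'_\ell}(1)$ is an $\ell$-th root of $\varphi_\ell^\ast \mathcal{O}_Y(1)$.

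After regrading, $a_\ell$ is a nonzero global section of $\mathcal{O}_{Y'_\ell}(1)$, and tensoring the multiplication-by-$a_\ell$ map $\mathcal{O}_{Y'_\ell} \to \mathcal{O}_{Y'_\ell}(1)$ with $\varphi_\ell^\ast \mathcal{S}$ and passing to $H^1$ sends $\varphi_\ell^\ast c = \delta(\varphi_\ell^\ast f_0)$ to $\delta(a_\ell \varphi_\ell^\ast f_0) = 0$, using $a_\ell f_0 \in IR^{+ \text{GR}}$. The degree ratio $\deg \mathcal{O}_{Y'_\ell}(1)/\deg \varphi_\ell = \deg \mathcal{O}_Y(1)/\ell$ tends to zero as $\ell \to \infty$, giving for every $\eps > 0$ an annihilating line bundle that is an $\ell$-th root of $\varphi_\ell^\ast \mathcal{O}_Y(1)$, as required. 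The main obstacle is that Theorem~\ref{almostzerodagger} is stated within Situation~\ref{Sit1}, which demands $R$ standard graded, whereas the section ring of an arbitrary ample line bundle need not be; this is handled by first replacing $R$ by a normal standard graded finite extension via Lemma~\ref{stdgraded}, corresponding to a finite cover $\pi \colon Y'' \to Y$, after which roots of $\pi^\ast \mathcal{O}_Y(1)$ on further covers of $Y''$ are automatically roots of $\mathcal{O}_Y(1)$ on the corresponding covers of $Y$.
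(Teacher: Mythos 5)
Your overall strategy mirrors the paper's: present $\mathcal{S}$ (up to twist) as a syzygy bundle via Proposition~\ref{Syzdet}, pass through the equivalence with graded dagger closure from Theorem~\ref{almostzerodagger}, and read off annihilating line bundles as roots of $\mathcal{O}_Y(1)$ from the construction in that proof. The core of the argument is sound and matches the paper's (very terse) proof in spirit.

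The gap is in the final paragraph, where you address the case in which $\mathcal{O}_Y(1)$ is merely ample but not globally generated. You correctly note that Situation~\ref{Sit1} (hence Theorem~\ref{almostzerodagger} and, relevant to your degree bookkeeping, Lemma~\ref{Pullback}) requires a covering of $\Proj R$ by $D_+(f)$ with $f$ of degree one, which by Proposition~\ref{PCover} is equivalent to global generation of $\mathcal{O}_Y(1)$. Your proposed remedy --- apply Lemma~\ref{stdgraded} to the section ring $R$ to obtain a finite standard graded extension $R \subseteq S$ with $Y'' = \Proj S \to Y$, and then run the argument there --- does not close the gap: running the machinery over $S$ produces annihilating line bundles that are roots of $\mathcal{O}_{\Proj S}(1) = S(1)^{\sim}$, and you would need $\mathcal{O}_{\Proj S}(1)$ to itself be a root of $\pi^\ast\mathcal{O}_Y(1)$ to conclude. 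That comparison $\pi^\ast R(1)^{\sim} \cong S(1)^{\sim}$ is precisely the content of Lemma~\ref{Pullback} applied to $R \subseteq S$, whose hypothesis is the very covering condition on $\Proj R$ you are trying to establish; you are going in a circle. Your closing sentence tacitly identifies $\mathcal{O}_{Y''}(1)$ with $\pi^\ast\mathcal{O}_Y(1)$, which is the unjustified step. The paper sidesteps this by first making a finite cover $\psi \colon X \to Y$ on which $\psi^\ast\mathcal{O}_Y(1)$ becomes globally generated (Corollary~\ref{globallygeneratedafterpullback}); then the section ring of $\psi^\ast\mathcal{O}_Y(1)$ on $X$ is covered by degree-one standard opens via Proposition~\ref{PCover}, the argument of Theorem~\ref{almostzerodagger} goes through there, and roots of $\psi^\ast\mathcal{O}_Y(1)$ on further covers of $X$ are automatically roots of $\mathcal{O}_Y(1)$. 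Replacing your Lemma~\ref{stdgraded} step by this preliminary pullback repairs the proof.

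Two minor remarks. First, even after this preliminary pullback the section ring is generally not \emph{standard} graded, only covered in degree one, so one should say ``argue as in the proof of Theorem~\ref{almostzerodagger}'' rather than invoke it verbatim (the paper explicitly remarks that this weaker condition suffices throughout). Second, your insistence on surjectivity of the connecting map $\delta$ via Serre vanishing is a useful explicit touch that the paper's terse proof leaves implicit; that step is correct.
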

\begin{proof}
By Proposition \ref{Syzdet}, $\mathcal{S}$ is up to twist a syzygy bundle. This means that we have an exact sequence\footnote{A priori the $d$'s might be different but the proof of Proposition \ref{Syzdet} actually shows that they are equal.} $0 \to \mathcal{S} \to \bigoplus_{i=0}^r \mathcal{O}_Y(d) \to \mathcal{O}_Y(l) \to 0$. Making a finite pullback we may assume that $\mathcal{O}_Y(1)$ is generated by global sections. Hence, by Proposition \ref{PCover} its section ring is covered by finitely many standard open sets coming from elements of degree $1$.  Arguing as in the proof of Theorem \ref{almostzerodagger} we see that we can annihilate $\mathcal{S}$ by roots of $\mathcal{O}_Y(1)$.
\end{proof}

\begin{Prop}
\label{PTorsionbeliebigannulierbar}
Let $Y$ be a smooth projective curve over an algebraically closed field and $\mathcal{S}$ a vector bundle such that $\deg (\det \mathcal{S}) = 0$. Then if $\mathcal{S}$ is almost zero the annihilators can be chosen as sections of roots of any ample line bundle $\mathcal{L}$ on $Y$.
\end{Prop}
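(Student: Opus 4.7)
The strategy is to reduce to Proposition \ref{PConversealmostzero} by twisting $\mathcal{S}$ so that its determinant becomes a power of (the pullback of) the given ample line bundle $\mathcal{L}$. Concretely, fix $\varepsilon > 0$ and a class $c \in H^1(Y, \mathcal{S})$, and let $r = \rk \mathcal{S}$. Using Lemma \ref{allgPullback} applied to the line bundle $\mathcal{L}^k \otimes (\det \mathcal{S})^{-1}$ on $Y$ (for $k$ chosen large enough that it is generated by global sections), we obtain, after a finite pullback $\pi_0 \colon Z_0 \to Y$, a line bundle $\mathcal{T}$ on $Z_0$ with $\mathcal{T}^r \cong \pi_0^*\mathcal{L}^k \otimes \pi_0^*(\det \mathcal{S})^{-1}$. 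The twist $\mathcal{S}_1 := \pi_0^*\mathcal{S} \otimes \mathcal{T}$ then satisfies $\det \mathcal{S}_1 \cong \pi_0^*\mathcal{L}^k$, so it falls under the hypothesis of Proposition \ref{PConversealmostzero} with respect to the ample line bundle $\pi_0^*\mathcal{L}$.

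Since $\mathcal{S}$ is almost zero, so is $\mathcal{S}_1$: an almost-zero annihilator for a class of $\pi_0^*\mathcal{S}$ combined with a section of $\mathcal{T}$ yields an almost-zero annihilator for the corresponding class in $H^1(\mathcal{S}_1)$. Applying Proposition \ref{PConversealmostzero} to $\mathcal{S}_1$ and to the class $u \cdot \pi_0^*(c) \in H^1(Z_0, \mathcal{S}_1)$, where $u$ is a nonzero section of $\mathcal{T}$ (which exists since $\deg \mathcal{T}$ is positive for $k$ large), produces, after a further finite pullback $\rho \colon Z \to Z_0$, a line bundle $\mathcal{R}$ on $Z$ with $\mathcal{R}^N \cong (\pi_0 \rho)^* \mathcal{L}^{k_1}$ and a section $v \in H^0(Z, \mathcal{R})$ annihilating $\rho^*(u \cdot \pi_0^*c)$, with degree ratio arbitrarily small. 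The tensor $v \otimes \rho^*u$ is then a section of $\mathcal{R} \otimes \rho^*\mathcal{T}$ annihilating $(\pi_0 \rho)^*(c)$ by functoriality of multiplication on $H^1$.

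The main obstacle is to identify $\mathcal{R} \otimes \rho^*\mathcal{T}$ itself as a root of a power of $(\pi_0\rho)^*\mathcal{L}$. A direct computation gives
\[
(\mathcal{R} \otimes \rho^*\mathcal{T})^{rN} \cong (\pi_0\rho)^*\mathcal{L}^{k_1 r + k N} \otimes (\pi_0\rho)^*(\det \mathcal{S})^{-N},
\]
so the surplus is the degree-$0$ line bundle $(\pi_0\rho)^*(\det\mathcal{S})^{-N}$. The hypothesis $\deg(\det \mathcal{S}) = 0$ is precisely what makes this surplus manageable: when $\det\mathcal{S}$ is torsion on $Y$ (respectively becomes so after a controlled pullback), Lemma \ref{Ltortriv} supplies a further finite pullback trivializing it, after which $\mathcal{R} \otimes \rho^*\mathcal{T}$ becomes a genuine root of a power of the pulled-back $\mathcal{L}$. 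Since the surplus contributes degree zero, the ratio $\deg(\mathcal{R} \otimes \rho^*\mathcal{T})/\deg(\pi_0\rho) = (k_1 r + k N)\deg \mathcal{L}/(rN)$ is unaffected by absorbing it, and the smallness from Proposition \ref{PConversealmostzero} combined with a large choice of $N$ relative to $k$ keeps it below $\varepsilon$.
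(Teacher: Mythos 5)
Your overall strategy is the paper's strategy: twist the (pulled-back) bundle by a line bundle so the determinant becomes a power of $\mathcal{L}$, apply Proposition \ref{PConversealmostzero}, and combine the resulting annihilator with a section of the twisting line bundle. This is exactly right. However, there is a genuine gap in how you dispose of the surplus factor $(\pi_0\rho)^*(\det\mathcal{S})^{-N}$. You assert that $\det\mathcal{S}$, being of degree zero, ``becomes torsion after a controlled pullback'' and can then be trivialized by Lemma \ref{Ltortriv}. This is false when $\det\mathcal{S}$ has infinite order in $\Pic^0 Y$: a finite morphism of smooth curves induces an injection on $\Pic^0$ (it factors through the pullback on Jacobians, which is an isogeny onto its image followed by an inclusion), so a non-torsion degree-zero line bundle remains non-torsion under every finite pullback. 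Thus your absorption step collapses in precisely the case that the paper singles out as requiring a separate argument.

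The paper splits into cases. If $\det\mathcal{S}$ is torsion, Lemma \ref{Ltortriv} does trivialize it after pullback, reducing directly to Proposition \ref{PConversealmostzero} --- this matches your intended mechanism. If $\det\mathcal{S}$ is \emph{not} torsion, the paper does not try to trivialize it. Instead it first makes the relevant root $\mathcal{L}'$ of $\mathcal{L}$ have arbitrarily small degree-to-covering-degree ratio, and then chooses $\mathcal{M}$ with $\mathcal{M}^{\rk\mathcal{S}} = \mathcal{L}' \otimes (\det\varphi^*\mathcal{S})^\vee$. Because $\deg\det\mathcal{S} = 0$, the line bundle $\mathcal{M}$ has $\deg\mathcal{M} = \deg\mathcal{L}'/\rk\mathcal{S}$, so the twist $\mathcal{M}$ is itself small --- the degree-zero hypothesis is used to control the \emph{size} of the twist, not to kill the non-torsion determinant. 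You should replace your ``becomes torsion after a controlled pullback'' step with this smallness argument (or with the paper's explicit case split): keep the twisting bundle's degree proportional to that of an arbitrarily small root $\mathcal{L}'$, rather than attempting to trivialize a non-torsion class.
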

\begin{proof}
If $\det \mathcal{S}$ is torsion then the determinant is trivial after a finite pullback and the pullback of $\mathcal{S}$ is a twisted syzygy bundle with respect to any embedding. Thus in this case the claim follows from Proposition \ref{PConversealmostzero} above.

So assume that $\det \mathcal{S}$ is not a torsion element and fix an ample line bundle $\mathcal{L}$ on $Y$. In light of Corollary \ref{globallygeneratedafterpullback} and Lemma \ref{allgPullback} there is a finite morphism $\varphi: Y' \to Y$ of smooth projective curves such that there is a root $\mathcal{L}'$ on $Y'$ of $\mathcal{L}$ which is generated by global sections and such that $\deg \mathcal{L}'/\deg \varphi < \frac{\eps}{2}$. We note that pullbacks and $\det$ commute. Consider $\mathcal{L}'' = \mathcal{L}' \otimes  (\det \varphi^\ast \mathcal{S})^\vee$. After a finite pullback we may assume that there is a $\rk \mathcal{S}$th root $\mathcal{M}$ of $\mathcal{L}''$ which is globally generated.

In particular, there is a global section $s \neq 0$ of $\mathcal{M}$ which induces a morphism $H^1(Y', \varphi^\ast \mathcal{S}) \to H^1(Y',\varphi^\ast \mathcal{S} \otimes \mathcal{M})$. Moreover, $\det (\varphi^\ast \mathcal{S} \otimes \mathcal{M})$ = $\det \varphi^\ast \mathcal{S} \otimes \mathcal{M}^{\rk \mathcal{S}} = \mathcal{L}'$. Hence, $\varphi^\ast \mathcal{S} \otimes \mathcal{M}$ is a twisted syzygy bundle with respect to $\mathcal{L}'$. Since the latter is a root of $\mathcal{L}$ the claim follows from Proposition \ref{PConversealmostzero} above.
\end{proof}

\begin{Bem}
\label{AllesfixAnnulierbar}
Let $\mathcal{S}$ be a locally free sheaf on a smooth projective curve $Y$ over an algebraically closed field. Then $\mathcal{S}$ is almost zero if and only if it can be annihilated with respect to a suitable fixed embedding. That is, the annihilators can be chosen as roots of a suitable fixed line bundle $\mathcal{L}$ -- this follows from Propositions \ref{PConversealmostzero} and \ref{PTorsionbeliebigannulierbar} above. In particular, our definition of almost zero is equivalent to Definition \ref{RSSalmostzero} whenever both are applicable.
\end{Bem}

\section{Almost zero for line bundles}
In this section we will investigate line bundles with respect to the property of being almost zero. From the viewpoint of dagger closure this corresponds to the parameter case, i.\,e.\ one has two ideal generators $f_1, f_2$. In characteristic zero this notion will only depend on the degree of the line bundle for a given curve -- see Theorem \ref{Plbaz}. In positive characteristic the situation will be a little bit more subtle.

\begin{Le}
Let $Y$ be a smooth projective curve over an algebraically closed field and $\mathcal{E}$ a locally free sheaf. Let $Y'$ be an irreducible curve in $\mathbb{P}(\mathcal{E})$ which dominates the base. Write $a \xi^{n-1} + b \xi^{n-2}f$ for its numerical equivalence class, where $f$ is a fibre of $\pi: \mathbb{P}(\mathcal{E}) \to Y$ and $\xi$ a Weil divisor whose linear equivalence class corresponds to $\mathcal{O}_{\mathbb{P}(\mathcal{E})}(1)$. Let $\varphi$ be the morphism $Y' \to Y$ induced by $\pi$. Then $a = \deg(\varphi)$.
\end{Le}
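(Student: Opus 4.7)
The plan is to determine $a$ by intersecting the class $[Y']$ with a fibre $f$. Since $\dim \mathbb{P}(\mathcal{E}) = n$, both $\xi^{n-1}$ and $\xi^{n-2}f$ are 1-dimensional cycle classes, and pairing with the divisor class $f$ produces a number. The computation reduces to two ingredients: evaluating $\xi^{n-1} \cdot f$ and $(\xi^{n-2}f) \cdot f$, and then computing $[Y'] \cdot f$ directly from the geometry of the projection $\pi \vert_{Y'} = \varphi$.

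First I would compute $\xi^{n-1} \cdot f$ by restricting to a single fibre. A fibre $f = \pi^{-1}(y)$ is isomorphic to $\mathbb{P}^{n-1}$, and by construction $\xi$ restricts to $\mathcal{O}_{\mathbb{P}^{n-1}}(1)$, so $\xi^{n-1} \cdot f = \deg(\mathcal{O}_{\mathbb{P}^{n-1}}(1)^{n-1}) = 1$. Next, since $\pi$ maps to a curve, any two distinct fibres of $\pi$ are disjoint, so $f$ is numerically equivalent to any other fibre and therefore $f^2 = 0$ in the numerical Chow ring. Consequently $(\xi^{n-2}f) \cdot f = \xi^{n-2} \cdot f^2 = 0$. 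Combining these two computations with the expansion $[Y'] = a\xi^{n-1} + b\xi^{n-2}f$ gives $[Y'] \cdot f = a$.

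It remains to identify $[Y'] \cdot f$ with $\deg \varphi$. For this I would pick a general point $y \in Y$ and take the representative $f_y = \pi^{-1}(y)$ of the class $f$. Since $Y'$ is an irreducible curve dominating $Y$ via $\varphi$, the morphism $\varphi$ is finite, and for general $y$ the scheme-theoretic intersection $Y' \cap f_y$ consists of exactly $\deg \varphi$ reduced points, namely the points of $\varphi^{-1}(y)$. Hence $[Y'] \cdot f = \deg \varphi$, which together with the previous paragraph yields $a = \deg \varphi$.

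The only subtle point is making sure the intersection-theoretic product on $\mathbb{P}(\mathcal{E})$ matches the naive fibre-counting; this is standard on a smooth projective variety once one knows that numerical equivalence on the base (a curve) is captured by degree, so that all fibres of $\pi$ lie in one numerical class and a general fibre can be chosen to meet $Y'$ transversally in the expected number of points.
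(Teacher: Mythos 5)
Your approach matches the paper's at a high level: both compute the intersection number $[Y']\cdot f$ in two ways. Your auxiliary computations $\xi^{n-1}\cdot f = 1$ and $(\xi^{n-2}f)\cdot f = 0$ are correct and are implicit in the paper's shorter phrase ``intersecting $i_\ast Y'$ with a fibre.'' The gap is in your last step, where you assert that for general $y$ the scheme-theoretic intersection $Y'\cap f_y$ consists of exactly $\deg\varphi$ \emph{reduced} points and that a general fibre meets $Y'$ \emph{transversally}. This is fine in characteristic zero, but the lemma is stated over an algebraically closed field of arbitrary characteristic, and the paper uses it in positive characteristic as well. If $\varphi$ is inseparable (for instance a purely inseparable map of degree $p$ in characteristic $p$), then every set-theoretic fibre of $\varphi$ has strictly fewer than $\deg\varphi$ points and no choice of $y$ makes $Y'\cap f_y$ reduced or transversal. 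The conclusion $[Y']\cdot f=\deg\varphi$ is still true---at each intersection point the local multiplicity is the ramification index of $\varphi$ there, and these sum to $\deg\varphi$ for every $y$---but the justification you give (fibre counting by reduced points) does not deliver it.

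The paper sidesteps this by passing to the normalization of $Y'$ and applying the projection formula, reducing $[Y']\cdot f$ to $Y'\cdot\varphi^\ast Q$ for a point $Q\in Y$, and then invoking Hartshorne, Proposition II.6.9 (degree of the pullback divisor along a finite morphism of nonsingular curves), which carries no separability hypothesis. Replacing your transversality and reducedness claims with that citation, or with the ramification-index count, closes the gap.
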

\begin{proof}
We have the following commutative diagram
\[
\begin{xy}
 \xymatrix{& \mathbb{P}(\mathcal{E}) \ar[d]^{\pi} \\
Y' \ar[r]^\varphi \ar[ur]^i & Y }
\end{xy}
\] and replacing $Y'$ by its normalisation we may assume that $Y'$ is smooth (but $i$ will no longer be a closed immersion). Intersecting $i_\ast Y'$ with a fibre we have $a = i_\ast Y'.f$ which is equal to $i_\ast(Y'.i^\ast f)$ by the projection formula. Thus we have $i_\ast Y'.f = i_\ast(Y'. i^\ast \pi^\ast Q)$, where $Q$ is a point on $Y$. Considering this as an element of $\mathbb{Z}$ (via the degree map) rather than of the Chow ring we may omit the pushforward. Thus the latter is equal to $Y'. \varphi^\ast Q$ which is equal to $\deg \varphi$ by \cite[Proposition II.6.9]{hartshornealgebraic}.
\end{proof}

\begin{Prop}
\label{paramaz}
Let $Y$ be a smooth projective curve over an algebraically closed field. If $\mathcal{L}$ is a line bundle of degree $\geq 0$ then $\mathcal{L}$ is almost zero.
\end{Prop}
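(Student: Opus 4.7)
The plan is to verify condition (iii) of Theorem \ref{Lazas} for the extension $0 \to \mathcal{L} \to \mathcal{S}' \to \mathcal{O}_Y \to 0$ attached to an arbitrary class $c \in H^{1}(Y,\mathcal{L})$ (the case $c = 0$ being trivial). Write $\pi : X := \mathbb{P}(\mathcal{S}'^\vee) \to Y$ for the ruled surface and $C_0 = \mathbb{P}(\mathcal{L}^\vee)$ for the forcing divisor. The section $C_0$ corresponds to the quotient $\mathcal{S}'^\vee \twoheadrightarrow \mathcal{L}^\vee$, so the tautological class $\xi$ (numerically equal to $C_0$) satisfies $\xi|_{C_0} = \mathcal{L}^\vee$ and $\xi^{2} = -\deg \mathcal{L}$. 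By the lemma immediately preceding the proposition, an irreducible horizontal curve of numerical class $a\xi + bf$ (with $f$ a fibre) has $\deg(C\to Y) = a$ and $C \cdot C_0 = -a\deg\mathcal{L} + b$.

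For each large integer $a$ I would fix an effective divisor $D$ on $Y$ of degree $d = a\deg\mathcal{L} + (2g-1)$ (so the target ratio for a curve of class $a\xi + \pi^\ast D$ will equal $(2g-1)/a$) and analyse the linear system $|a\xi + \pi^\ast D|$ via the push-forward
\[
H^{0}(X, a\xi + \pi^\ast D) \;\cong\; H^{0}\bigl(Y,\, \Sym^{a}\mathcal{S}'^\vee \otimes \mathcal{O}_Y(D)\bigr).
\]
The standard filtration of $\Sym^{a-1}\mathcal{S}'^\vee$ arising from the defining sequence $0 \to \mathcal{O}_Y \to \mathcal{S}'^\vee \to \mathcal{L}^\vee \to 0$ has graded pieces the line bundles $\mathcal{L}^{-i}$ for $i = 0,\dots,a-1$; after twisting by $\mathcal{O}_Y(D)$, each graded piece has degree at least $d - (a-1)\deg\mathcal{L} = \deg\mathcal{L} + 2g-1 > 2g-2$, so each has vanishing $H^{1}$, and consequently $H^{1}(Y, \Sym^{a-1}\mathcal{S}'^\vee \otimes \mathcal{O}_Y(D)) = 0$. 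The short exact sequence
\[
0 \to \mathcal{O}_X((a-1)\xi + \pi^\ast D) \to \mathcal{O}_X(a\xi + \pi^\ast D) \to \mathcal{L}^{-a} \otimes \mathcal{O}_Y(D) \to 0
\]
then forces the restriction $H^{0}(X, a\xi + \pi^\ast D) \twoheadrightarrow H^{0}(C_0, \mathcal{L}^{-a} \otimes \mathcal{O}_Y(D))$ to be surjective; its target is non-zero because the line bundle $\mathcal{L}^{-a}\otimes\mathcal{O}_Y(D)$ on $C_0 \cong Y$ has degree $2g-1$. Hence some $C \in |a\xi + \pi^\ast D|$ does not contain $C_0$ as a component.

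Decomposing such a $C$ into irreducible components and discarding any purely fibral pieces (whose class lies in $\mathbb{Z}f$), a pigeon-hole argument on intersection numbers supplies an irreducible horizontal component $C_i \neq C_0$ with
\[
\frac{C_i \cdot C_0}{\deg(C_i \to Y)} \;\leq\; \frac{C \cdot C_0}{\deg(C \to Y)} \;=\; \frac{2g-1}{a}.
\]
Choosing $a > (2g-1)/\varepsilon$ makes the ratio smaller than $\varepsilon$, so Theorem \ref{Lazas}(iii) concludes that $c$ is almost zero.

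The heart of the argument is the vanishing $H^{1}(Y,\Sym^{a-1}\mathcal{S}'^\vee \otimes \mathcal{O}_Y(D)) = 0$, which rests on the standard filtration of $\Sym^{a-1}\mathcal{S}'^\vee$ having line-bundle graded pieces $\mathcal{L}^{-i}$ (independent of any stability hypothesis); the assumption $\deg\mathcal{L} \geq 0$ is what guarantees each such piece, after twisting by $\mathcal{O}_Y(D)$, has degree strictly greater than $2g-2$. This renders the argument characteristic-free and avoids any appeal to (strong) semistability. A minor secondary point is that one must allow $C$ itself to be reducible; the pigeon-hole decomposition and the eventual normalisation to a smooth curve $Y' \to Y$ are routine.
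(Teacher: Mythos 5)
Your proof is correct, and it reaches the conclusion via Theorem~\ref{Lazas}(iii) exactly as the paper does, but the way you manufacture curves of small intersection ratio is genuinely different. The paper invokes Hartshorne's classification of curves on a ruled surface \cite[Proposition V.2.20 (a)]{hartshornealgebraic}, which identifies the class $aY_0 + bf$ with $a > 0$, $b > ae$ as ample; it takes $b = ae+1$, passes to a high multiple $nC'$ and applies Bertini \cite[Theorem II.8.18 and Remark III.7.9.1]{hartshornealgebraic} to obtain an irreducible nonsingular curve $C$ whose ratio is $\frac{1}{a}$. You instead control the linear system $|a\xi + \pi^\ast D|$ directly: push-forward identifies its sections with $H^0(Y, \Sym^a\mathcal{S}'^\vee \otimes \mathcal{O}_Y(D))$, and the line-bundle filtration of $\Sym^{a-1}\mathcal{S}'^\vee$ together with $\deg\mathcal{L} \ge 0$ gives $H^1(Y, \Sym^{a-1}\mathcal{S}'^\vee \otimes \mathcal{O}_Y(D)) = 0$, so the restriction to $C_0$ is surjective onto a nonzero space and some member of the system misses $C_0$; a mediant (pigeon-hole) inequality then extracts an irreducible horizontal component with ratio $\le (2g-1)/a$. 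This sidesteps both the ruled-surface classification and Bertini (the latter requiring the extra remark in positive characteristic), and it makes transparent that $\deg\mathcal{L} \ge 0$ is exactly what drives the $H^1$-vanishing. The paper's route is shorter and produces an irreducible curve immediately, avoiding the decomposition and yielding the cleaner ratio $1/a$.

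There is one small wrinkle you should smooth out: when $g = 0$ the restriction target $\mathcal{L}^{-a} \otimes \mathcal{O}_Y(D)$ has degree $2g-1 = -1$ and thus no nonzero global section, and if in addition $\deg\mathcal{L} = 0$ there is not even an effective $D$ of the prescribed negative degree $d$. The claim is saved because this case is vacuous -- for $Y = \mathbb{P}^1$ and $\deg\mathcal{L} \ge 0$ one already has $H^1(Y, \mathcal{L}) = 0$, so only $c = 0$ occurs -- but you should either remark on this explicitly or take $d = a\deg\mathcal{L} + 2g$, which makes the argument uniform in $g$.
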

\begin{proof}
Fix $c \in H^1(Y, \mathcal{L})$. This defines an extension $0 \to \mathcal{L} \to \mathcal{E} \to \mathcal{O} \to 0$. Dualising we have that $X = \mathbb{P}(\mathcal{E}^\vee)$ is a normalised ruled surface with $e$-invariant $e = \deg \mathcal{L} \geq 0$. This is normalised since if we tensor the dualised short exact sequence with a line bundle $\mathcal{M}$ of negative degree then $H^0(Y, \mathcal{E}^\vee \otimes \mathcal{M}) = 0$. It also follows that $\mathcal{E}^\vee$ is not ample, since it surjects onto $\mathcal{L}^\vee$ which has degree $\leq 0$. Fix a section $Y_0$ that is equivalent to $\mathcal{O}_{\mathbb{P}(\mathcal{E}^\vee)}(1)$.

In order to apply Theorem \ref{Lazas} we need to find for every $\eps > 0$ a curve $C$ in $X$ such that $\varphi: C \to Y$ is dominant and $0 < Y_0.C/\deg \varphi < \eps$. The curve $C$ then defines a section of $\mathbb{P}(\varphi^\ast \mathcal{E}^\vee)$ which corresponds to a line bundle $\mathcal{M}$ on $C$ with $\deg \mathcal{M} = Y_0.C$ such that $\varphi^\ast \mathcal{E}^\vee \to \mathcal{M} \to 0$ is exact (see \cite[Proposition 7.12]{hartshornealgebraic}). Also note that since $\deg \varphi^\ast \mathcal{L}^\vee \leq 0$ we cannot have a surjection $\varphi^\ast \mathcal{L}^\vee \to \mathcal{M} \to 0$. % Ex. II.7.1 im Hartshorne bemuehen.

By \cite[Proposition V.2.20 (a)]{hartshornealgebraic} an irreducible curve $C' \neq Y_0$ is numerically equivalent to $a Y_0 + bf$ where $f$ is the class of a fibre and $a >0, b \geq ae$. Moreover, $C'$ is ample if in addition $b > ae$ (loc.\,cit.). So fix a divisor $C'$ numerically equivalent to $a Y_0 + bf$ with $a > 0$ and choose $b = ae +1$. Then $nC' = na Y_0 + n(ae +1 ) f$ is very ample for $n \gg 0$. Use $nC'$ to embed $X$ into some $\mathbb{P}^N$ and apply Bertini's Theorem \cite[Theorem II.8.18 and Remark III.7.9.1]{hartshornealgebraic} to see that there is an irreducible nonsingular curve $C$ on $X$ which is linearly equivalent to $nC'$. %Wenn man jetzt Bertini benutzt, wieso kann dann eigtl nichtn Punkt rauskommen? -- Das ist so ne Weil/Cartie-Divisor Geschichte. Das zu dem abg Uschema gehoert die Idealgarbe O(-1)\vert_X die is invertierbar, also is das ding codim 1.
Since $C$ is not contained in a fibre it dominates the base $Y$ and we have an induced morphism $\varphi: C \to Y$, where $\deg \varphi = na$. This yields $\frac{C.Y_0}{\deg \varphi} = \frac{n}{an}= \frac{1}{a}$. Therefore, choosing $a \gg 0$ completes the proof.
\end{proof}

One can also prove Proposition \ref{paramaz} in characteristic zero using \cite[Corollary 3.5]{robertssinghannihilators}.
Furthermore, one can prove this in characteristic $p > 0$ using the $k$-linear Frobenius.

\begin{Bem}
We remark that one cannot settle the support condition (cf. Remark \ref{BemSupportCond}) by just looking at numerical equivalence classes if $n \geq 3$.
To see this assume Situation \ref{Sit1} and assume that $\mathcal{S}$ is strongly semistable. Write $\mathcal{S}' = \Syz(f_0, \ldots, f_n)(d_0)$, where $f_0$ is a homogeneous element of degree $d_0$ and assume that $\deg \mathcal{S} > 0$. The dual of $\mathcal{S}'$ is then given by the exact sequence \[\begin{xy}\xymatrix{0 \ar[r]& \mathcal{O}_Y \ar[r]& \mathcal{S}'^\vee \ar[r]& \mathcal{S}^\vee  \ar[r]& 0.}\end{xy}\] Since both $\mathcal{S}^\vee$ and $\mathcal{O}_Y$ are strongly semistable and since $\deg \mathcal{S}^\vee < 0$, the inclusion $\mathcal{O}_Y \subset \mathcal{S}'^\vee$ is a strong Harder-Narasimhan filtration of $\mathcal{S}'^\vee$. 

Now, in \cite[Lemma 2.3]{fulgereffcycles} Fulger shows that one has an isomorphism of the closure of the effective cones of $1$-cycles of $\mathbb{P}(\mathcal{S}^\vee)$ and $\mathbb{P}(\mathcal{S}'^\vee)$. Hence, one cannot discern by numerical properties whether a given curve lying on $\mathbb{P}(\mathcal{S}'^\vee)$ is already contained in $\mathbb{P}(\mathcal{S}^\vee)$.
Note that Fulger proves this only in characteristic zero but this result is probably true more generally in arbitrary characteristic if one looks at a strong Harder-Narasimhan filtration.

If $\deg \mathcal{S}= 0$ then $\mathcal{S}'^\vee$ itself is strongly semistable. % by Proposition \ref{sstproperties} (b) and Example \ref{BspfuerSection6} (a).
By \cite[Lemma 2.2]{fulgereffcycles}, which also holds in characteristic $p > 0$ if one assumes strong semistability, the closure of the effective cone of $1$-cycles is spanned by $\xi'^{n-1}, \xi'^{n-2}f'$, where $f'$ is the class of a fibre and $\xi'$ is the class of $\mathcal{O}_{\mathbb{P}(\mathcal{S}'^\vee)}(1)$. Furthermore, we have a closed immersion $i: \mathbb{P}(\mathcal{S}^\vee) \to \mathbb{P}(\mathcal{S}'^\vee)$ and denoting $\mathcal{O}_{\mathbb{P}(\mathcal{S}^\vee)}(1)$ by $\xi$ one has that $i^\ast \xi' = \xi$ and $i^\ast f' = f$, where $f$ denotes a fibre in $\mathbb{P}(\mathcal{S}^\vee)$. We want to show that $i_\ast \xi^{n-2}$ and $i_\ast (\xi^{n-3}f)$ are numerically equivalent to $\xi'^{n-1}$ and $\xi'^{n-2}f'$ respectively. So let $D = a \xi' + bf'$ be a divisor in $\mathbb{P}(\mathcal{S}'^\vee)$. Then we have $i_\ast \xi^{n-2}.D = i_\ast(\xi^{n-2}.i^\ast D)$ by the projection formula and this is equal to $b + a \xi'^n = \xi'^{n-1}.D$. Similarly $i_\ast \xi^{n-3}f.D = a = \xi'^{n-2}f.D$.
\end{Bem}

\begin{Bsp}
\label{ProjMultiplikationsmorphismen}
We now want to provide an example of an extension $0 \to \mathcal{S} \to \mathcal{S}' \to \mathcal{O} \to 0$ where we have curves with positive intersection lying in the support of $\mathbb{P}(\mathcal{S}^\vee)$ that contradict the ampleness of $\mathcal{S}'^\vee$.

Let $k$ be an algebraically closed field and let $Y$ be an elliptic curve over $k$ (e.\,g.\ $Y = k[x,y,z]/(x^3 + y^3 + z^3)$). Fix any morphism $\varphi: Y \to \mathbb{P}^1_k$ of degree $2$ and write $\mathcal{O}_Y(1) = \varphi^\ast \mathcal{O}_{\mathbb{P}^1_k}(1)$. Consider now $\mathcal{S} = \mathcal{O}_Y \oplus \mathcal{O}_Y$ and the short exact sequence $0 \to \mathcal{O}_Y(-1) \to \mathcal{S} \to \mathcal{O}_Y(1) \to 0$ (this is just the pullback of a twist of the defining sequence of the cotangent bundle on $\mathbb{P}^1_k$ -- cf.\ \cite[Theorem II.8.13]{hartshornealgebraic}).

We have $H^1(Y, \mathcal{S}) = 2$ and dualising this sequence we obtain a surjection $\mathcal{S}^\vee \to \mathcal{O}_Y(1)$. Pulling back along multiplication morphisms $N_Y: Y' \to Y$ (for some $N \in \mathbb{N}$) we obtain surjections $N_Y^\ast \mathcal{S}^\vee \to \mathcal{O}_{Y'}(1)$ so that $\frac{\deg \mathcal{O}_{Y'}(1)}{\deg N_Y} = \frac{\deg \mathcal{O}_{Y}(1)}{N^2}$. Therefore, we have curves with positive intersection lying in the support of $\mathbb{P}(\mathcal{S}^\vee)$ which contradict the ampleness of $\mathcal{S}'^\vee$.

Note that $\mathcal{S}$ is actually a twisted syzygy bundle by virtue of Proposition \ref{Syzdet}. We have presenting sequences \[\begin{xy}\xymatrix{0 \ar[r]& \mathcal{S} \ar[r]& \mathcal{O}_Y(m)^3 \ar[r]& \mathcal{O}_Y(3m) \ar[r]& 0.}\end{xy} \]Choosing $m = 1$ we have the surjective connecting homomorphism \[\begin{xy}\xymatrix{\delta_\mathcal{S}: H^0(Y, \mathcal{O}_Y(3)) \ar[r]& H^1(Y, \mathcal{S}).}\end{xy}\]Hence, every class is realised via an element that stems from the section ring induced by $\mathcal{O}_Y(1)$.
\end{Bsp}

The following lemma is false in positive characteristic as we will see in Remark \ref{BemAzLinebundlesPos} below.

\begin{Le}
\label{Lnotamplenaz}
Let $Y$ be a smooth projective curve over an algebraically closed field of characteristic zero. Let $\mathcal{L}$ be a line bundle of negative degree. Then any nonzero $c \in H^1(Y, \mathcal{L})$ is not almost zero. 
\end{Le}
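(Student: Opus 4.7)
The plan is to reach a contradiction using the geometric criterion of Theorem~\ref{Lazas}(ii): assume $c$ is almost zero and $\deg \mathcal{L} < 0$, and force the splitting of the extension associated to $c$. Write this extension as $0 \to \mathcal{L} \to \mathcal{E} \to \mathcal{O}_Y \to 0$. The first task is to verify that the maximal slope $\mu_{\max}(\mathcal{E})$ of a subsheaf of $\mathcal{E}$ is strictly negative. Any saturated line subbundle $\mathcal{N} \hookrightarrow \mathcal{E}$ is either $\mathcal{L}$ itself, contributing slope $\deg \mathcal{L} < 0$, or satisfies $\mathcal{N} \cap \mathcal{L} = 0$; in the latter case the composition $\mathcal{N} \to \mathcal{E} \to \mathcal{O}_Y$ is an injective map of line bundles, forcing $\deg \mathcal{N} \leq 0$, with equality ruled out because it would produce a splitting of the extension, contradicting $c \neq 0$.

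Now, assuming $c$ is almost zero, Theorem~\ref{Lazas}(ii) produces for every $\eps > 0$ a finite morphism $\varphi\colon Y' \to Y$ of smooth projective curves and a surjection $\varphi^*\mathcal{E}^\vee \twoheadrightarrow \mathcal{L}'$ with $\deg \mathcal{L}' / \deg \varphi < \eps$. Dualizing this surjection yields a saturated line subbundle $\mathcal{L}'^\vee \hookrightarrow \varphi^*\mathcal{E}$ (the quotient $\varphi^*\mathcal{E}/\mathcal{L}'^\vee$ being locally free), whence $\deg \mathcal{L}'^\vee \leq \mu_{\max}(\varphi^*\mathcal{E})$. Since we are in characteristic zero, every finite morphism of smooth projective curves is separable, so by the material recalled in Section~1 the Harder-Narasimhan filtration is preserved by pullback, giving $\mu_{\max}(\varphi^*\mathcal{E}) = \deg \varphi \cdot \mu_{\max}(\mathcal{E})$. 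Combining the two inequalities yields $\deg \mathcal{L}' / \deg \varphi \geq -\mu_{\max}(\mathcal{E}) > 0$, and choosing $\eps$ smaller than this fixed positive constant contradicts the almost zero condition.

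The main hurdle is the computation of $\mu_{\max}(\mathcal{E})$: the non-splitness of $c$ enters precisely there, ruling out a line sub-bundle of slope zero coming from a potential splitting of the sequence. The other ingredient, that $\mu_{\max}$ scales linearly under finite pullback, is where the characteristic zero hypothesis is used; in positive characteristic it can fail under inseparable (Frobenius-type) morphisms, which matches the forthcoming Remark~\ref{BemAzLinebundlesPos} asserting that the statement itself fails there.
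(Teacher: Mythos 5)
Your proof is correct, and it follows a genuinely self-contained route that differs from the paper's in two respects. The paper works with the dualised extension $0 \to \mathcal{O}_Y \to \mathcal{E}^\vee \to \mathcal{L}^\vee \to 0$: it quotes Gieseker's result that a non-trivial extension of an ample bundle by $\mathcal{O}_Y$ has $\mu_{\min} > 0$, then invokes \cite[Theorem 2.3]{brennerslope} (characteristic zero: $\mu_{\min} > 0$ implies ampleness) and concludes via Theorem \ref{Lazas} that $c$ is not almost zero. You instead bound $\mu_{\max}(\mathcal{E})$ from above by hand, which amounts to reproving the rank-two case of Gieseker's observation: any saturated line subbundle other than $\mathcal{L}$ injects into $\mathcal{O}_Y$ and has degree $\leq 0$, with equality ruled out exactly by $c \neq 0$; together with $\mu(\mathcal{E}) = \tfrac{1}{2}\deg\mathcal{L} < 0$ this gives $\mu_{\max}(\mathcal{E}) < 0$. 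You then never pass through ampleness at all: you combine the surjection from Theorem \ref{Lazas}(ii) with the fact that the Harder--Narasimhan filtration is preserved under separable finite pullback, getting $\deg\mathcal{L}'/\deg\varphi \geq -\mu_{\max}(\mathcal{E})$, a fixed positive bound, which already contradicts the almost-zero hypothesis for small $\eps$. The two proofs are ultimately equivalent (yours is the dual statement with the ampleness theorem unrolled into the HN/Seshadri calculation it rests on), but yours avoids both external citations and is a little more transparent about exactly where non-splitness and characteristic zero enter. A small point worth noting for yourself: you did not need the support condition (that the surjection in Theorem \ref{Lazas}(ii) fails to factor through $\varphi^\ast\mathcal{L}^\vee$), and indeed that extra information is not required for this implication.
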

\begin{proof}
The class $c$ defines a non-trivial extension $0 \to \mathcal{O}_Y \to \mathcal{E} \to \mathcal{L}^\vee \to 0$ and $\mathcal{L}^\vee$ is ample since $\deg \mathcal{L}^\vee > 0$. By \cite[Proposition 2.2]{giesekerample} every quotient bundle of $\mathcal{E}$ has positive degree, that is, $\mu_{\min}(\mathcal{E}) > 0$. Since the characteristic is zero this implies that $\mathcal{E}$ is ample by \cite[Theorem 2.3]{brennerslope} and we are done by Theorem \ref{Lazas}.
\end{proof}

\begin{Theo}
\label{Plbaz}
Let $Y$ be a smooth projective curve over an algebraically closed field of characteristic zero. A line bundle $\mathcal{L}$ on $Y$ is almost zero if and only if $\deg \mathcal{L} \geq 0$ or if $Y = \mathbb{P}^1$ and $\mathcal{L} = \mathcal{O}_{\mathbb{P}^1}(-1)$.
\end{Theo}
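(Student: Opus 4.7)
The statement decomposes naturally into the ``if'' direction and the ``only if'' direction. I would handle them separately.

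For the ``if'' direction, the case $\deg \mathcal{L} \geq 0$ is immediate from Proposition \ref{paramaz}, which was already established without any characteristic assumption. The second case, $Y = \mathbb{P}^1$ and $\mathcal{L} = \mathcal{O}_{\mathbb{P}^1}(-1)$, is vacuously true since $H^1(\mathbb{P}^1, \mathcal{O}_{\mathbb{P}^1}(-1)) = 0$, so there is no nonzero class to annihilate.

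For the ``only if'' direction, assume $\mathcal{L}$ is almost zero with $\deg \mathcal{L} < 0$. The key input is Lemma \ref{Lnotamplenaz}: in characteristic zero, any nonzero cohomology class in $H^1(Y, \mathcal{L})$ with $\deg \mathcal{L} < 0$ fails to be almost zero. Since every class in $H^1(Y, \mathcal{L})$ is assumed almost zero, this forces $H^1(Y, \mathcal{L}) = 0$. Applying Serre duality gives $H^0(Y, \mathcal{L}^\vee \otimes \omega_Y) = 0$ as well.

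Now I would exploit Riemann--Roch. Since $\deg \mathcal{L} < 0$ we have $h^0(Y, \mathcal{L}) = 0$, so Serre duality gives $h^1(Y, \mathcal{L}^\vee \otimes \omega_Y) = 0$ as well. Hence Riemann--Roch yields
\[
0 = h^0(Y, \mathcal{L}^\vee \otimes \omega_Y) = \deg(\mathcal{L}^\vee \otimes \omega_Y) + 1 - g = -\deg \mathcal{L} + g - 1,
\]
so $\deg \mathcal{L} = g - 1$. Combined with $\deg \mathcal{L} < 0$ this forces $g = 0$, hence $Y \cong \mathbb{P}^1$, and $\deg \mathcal{L} = -1$, so $\mathcal{L} \cong \mathcal{O}_{\mathbb{P}^1}(-1)$.

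There is no real obstacle here, as all the heavy lifting has been done in Proposition \ref{paramaz} and Lemma \ref{Lnotamplenaz}; the remaining content is a short Riemann--Roch computation. The only point worth emphasising is that the characteristic zero hypothesis enters solely through Lemma \ref{Lnotamplenaz} (which in turn uses the implication ``$\mu_{\min} > 0 \Rightarrow$ ample'' available in characteristic zero), and this is precisely why the positive characteristic situation is genuinely different, as announced at the start of the section.
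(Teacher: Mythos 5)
Your argument is correct and is essentially the paper's proof: both rely on Proposition \ref{paramaz} for the ``if'' direction, on Lemma \ref{Lnotamplenaz} to force $H^1(Y,\mathcal{L})=0$ when $\deg\mathcal{L}<0$, and then on Riemann--Roch to conclude $g=0$ and $\deg\mathcal{L}=-1$. The only (cosmetic) difference is that you apply Riemann--Roch to $\mathcal{L}^\vee\otimes\omega_Y$ after Serre-dualising, whereas the paper applies it directly to $\mathcal{L}$ using $h^0(\mathcal{L})=h^1(\mathcal{L})=0$; both routes yield the same equation $\deg\mathcal{L}=g-1$.
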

\begin{proof}
For the only if part we assume that $\deg \mathcal{L} < 0$ and apply Lemma \ref{Lnotamplenaz} to see that $c \in H^1(Y, \mathcal{L})$ is almost zero if and only if $c = 0$. Thus $\mathcal{L}$ is almost zero if and only if $H^1(Y, \mathcal{L}) = 0$. Note that $\mathcal{L}$ has no nonzero global sections since its degree is negative. Applying Riemann-Roch yields that $\deg \mathcal{L} + 1 -g = 0$. This is only possible for $\deg \mathcal{L} = -1$ and $g = 0$. Hence, $Y = \mathbb{P}^1$ and $\mathcal{L} = \mathcal{O}_{\mathbb{P}^1}(-1)$.

The other implication follows from Proposition \ref{paramaz} and from applying Serre duality to $\mathcal{O}_{\mathbb{P}^1}(-1)$ to see that $H^1(\mathbb{P}^1, \mathcal{O}_{\mathbb{P}^1}(-1)) = 0$.
\end{proof}

\begin{Bem}
\label{BemAzLinebundlesPos}
We now point out why Lemma \ref{Lnotamplenaz} is false in positive characteristic. So let $Y$ be a smooth projective curve over an algebraically closed field of characteristic $p > 0$ and let $\mathcal{L}$ be a line bundle on $Y$.

Assume that $\deg \mathcal{L} < 0$ and that $c \in H^1(Y, \mathcal{L})$ is nonzero. Consider the (dual) extension $0 \to \mathcal{O}_Y \to \mathcal{E} \to \mathcal{L}^\vee \to 0$ defined by $c$. As in Lemma \ref{Lnotamplenaz} we have that $\mu_{\min}(\mathcal{E}) > 0$.
If $\mathcal{E}$ is strongly semistable then it is ample as well (\cite[Theorem 2.3]{brennerslope}). If $\mathcal{E}$ is not semistable then the quotients of the Harder-Narasimhan filtration are line bundles (since $\rk \mathcal{E} = 2$). In particular, this is a strong Harder-Narasimhan filtration and it follows that $\mu_{\min}(\mathcal{E}) = \bar{\mu}_{\min}(\mathcal{E}) > 0$ -- and again $\mathcal{E}$ is ample by \cite[Theorem 2.3]{brennerslope}.

So assume now that $\mathcal{E}$ is semistable but not strongly semistable. In this case we have to consider a sufficiently high Frobenius pull back so that the Harder-Narasimhan filtration of ${F^e}^\ast \mathcal{E}$ is strong. The issue is that it may indeed happen that ${F^e}^\ast(c) = 0$ and in this case $\mathcal{E}$  is not ample and $c$ is almost zero. See \cite[Example 3.2]{hartshorneamplecurve} for an explicit case where this happens.
\end{Bem}

\section{Almost zero for vector bundles and slope conditions}

In this section we turn our attention to locally free sheaves of arbitrary rank.

\begin{Le}[Persistence]
\label{Lazkes}
Let $\mathcal{S}, \mathcal{T}$ be locally free sheaves on a smooth projective curve $Y$ over an algebraically closed field with a morphism $\mathcal{S} \to \mathcal{T}$. If $c \in H^1(Y, \mathcal{S})$ is almost zero then its image in $H^1(Y, \mathcal{T})$ is almost zero as well.
\end{Le}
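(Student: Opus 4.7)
The plan is essentially a naturality/functoriality argument: the notion of almost zero is preserved by any morphism of sheaves because the annihilation condition commutes with the induced map on cohomology.

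First I would fix $\varepsilon > 0$ and invoke the almost zero hypothesis on $c$: this produces a finite morphism $\varphi : Y' \to Y$ of smooth projective curves, a line bundle $\mathcal{L}$ on $Y'$ with $\deg\mathcal{L}/\deg\varphi < \varepsilon$, and a nonzero global section $s \in H^0(Y',\mathcal{L})$ such that $s\varphi^\ast(c) = 0$ in $H^1(Y',\varphi^\ast\mathcal{S} \otimes \mathcal{L})$.

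Next I would write down the diagram induced by the morphism $\alpha : \mathcal{S} \to \mathcal{T}$. Pulling back along $\varphi$ yields $\varphi^\ast\alpha : \varphi^\ast\mathcal{S} \to \varphi^\ast\mathcal{T}$, and tensoring with $\mathcal{L}$ is exact (since $\mathcal{L}$ is locally free). Taking $H^1$ we obtain the commutative square
\[
\begin{xy}\xymatrix{
H^1(Y',\varphi^\ast\mathcal{S}) \ar[r]^-{\cdot s} \ar[d] & H^1(Y',\varphi^\ast\mathcal{S}\otimes\mathcal{L}) \ar[d] \\
H^1(Y',\varphi^\ast\mathcal{T}) \ar[r]^-{\cdot s} & H^1(Y',\varphi^\ast\mathcal{T}\otimes\mathcal{L}),
}\end{xy}
\]
where the horizontal maps are multiplication by $s$ and the vertical maps are induced by $\varphi^\ast\alpha$ (tensored with $\mathcal{L}$ on the right). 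Commutativity holds because the morphism of sheaves $\varphi^\ast\alpha \otimes \mathrm{id}_{\mathcal{L}}$ is $\mathcal{O}_{Y'}$-linear and hence commutes with multiplication by the section $s$.

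Finally I would chase $c$ through the diagram. Let $c' \in H^1(Y,\mathcal{T})$ denote the image of $c$ under $\alpha$. By functoriality of pullback, $\varphi^\ast(c')$ is the image of $\varphi^\ast(c)$ under the left vertical arrow. Then $s\varphi^\ast(c')$ equals the image of $s\varphi^\ast(c) = 0$ under the right vertical arrow, hence is zero in $H^1(Y',\varphi^\ast\mathcal{T}\otimes\mathcal{L})$. Since $\varphi,\mathcal{L},s$ satisfy $\deg\mathcal{L}/\deg\varphi < \varepsilon$ and $s\neq 0$, this exhibits $c'$ as almost zero. There is no real obstacle here; the only thing to be careful about is ensuring that the section $s$ is still nonzero after passing to $\mathcal{T}$, which is automatic because $s$ lives on $Y'$ and is unchanged by the construction.
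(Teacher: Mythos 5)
Your argument is correct and is essentially identical to the paper's proof: both invoke the almost-zero witness $(\varphi,\mathcal{L},s)$ and chase $c$ through the commutative square relating multiplication by $s$ to the map induced by $\mathcal{S}\to\mathcal{T}$. No gaps; the extra remark about $s$ remaining nonzero is harmless but unnecessary since $s$ is never altered.
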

\begin{proof}
Assume that $0 \neq s \in H^0(Y',\mathcal{L})$ annihilates $c$ on $\varphi: Y' \to Y$ such that $\frac{\deg \mathcal{L}}{\deg \varphi} < \eps$. Then we have a commutative square
\[ \begin{xy} \xymatrix{H^1(Y',\varphi^\ast\mathcal{S} \otimes \mathcal{L}) \ar[r] & H^1(Y', \varphi^\ast\mathcal{T} \otimes \mathcal{L}) \\
    H^1(Y',\varphi^\ast\mathcal{S})  \ar[u]_{\cdot s} \ar[r]& H^1(Y',\varphi^\ast\mathcal{T})\ar[u]_{\cdot s}}
   \end{xy}
\] which proves the assertion.
\end{proof}

\begin{Le}
Let $\mathcal{S}$ be a locally free sheaf on a smooth projective curve $Y$ over an algebraically closed field and $c \in H^1(Y, \mathcal{S})$. Then $c$ is almost zero if and only if $\varphi^\ast(c) \in H^1(X, \varphi^\ast \mathcal{S})$ is almost zero for every finite morphism $\varphi: X \to Y$ of smooth projective curves.
\end{Le}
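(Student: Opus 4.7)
The ``if'' direction is immediate by taking $\varphi = \operatorname{id}_Y$. For the converse, the plan is to take a witness for the almost vanishing of $c$ and transport it to a witness for $\varphi^*(c)$ via a normalized fibre-product construction. So suppose $c$ is almost zero and let $\varphi\colon X \to Y$ be a finite morphism of smooth projective curves. Given $\eps > 0$, I would apply the almost zero hypothesis for $c$ with the scaled bound $\eps/\deg\varphi$: this yields a finite morphism $\psi\colon Y' \to Y$ of smooth projective curves, a line bundle $\mathcal{L}$ on $Y'$ and a nonzero section $s \in H^0(Y', \mathcal{L})$ with $\deg\mathcal{L}/\deg\psi < \eps/\deg\varphi$ and $s\,\psi^*(c) = 0$.

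Next I would form the fibre product $X \times_Y Y'$. The ring $k(X) \otimes_{k(Y)} k(Y')$ is a finite-dimensional $k(Y)$-algebra, and modding out its nilradical produces a product of finite field extensions of $k(Y)$; each factor is the function field of an irreducible component of $X \times_Y Y'$ dominating both $X$ and $Y'$. Picking any such component and passing to its normalization gives a smooth projective curve $Z$ equipped with finite dominant morphisms $\pi_X\colon Z \to X$ and $\pi_{Y'}\colon Z \to Y'$ satisfying $\varphi \pi_X = \psi\pi_{Y'}$.

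Set $\mathcal{M} := \pi_{Y'}^* \mathcal{L}$ and $t := \pi_{Y'}^* s \in H^0(Z, \mathcal{M})$. Since pullback of sections along a finite dominant morphism is injective, $t \neq 0$. The identity $\varphi\pi_X = \psi\pi_{Y'}$ gives $\pi_X^* \varphi^*(c) = \pi_{Y'}^*\psi^*(c)$, and this class is killed by $t$ upon pulling back the relation $s\,\psi^*(c) = 0$ through $\pi_{Y'}$. Multiplicativity of degree along $\psi\pi_{Y'} = \varphi\pi_X$ yields $\deg\pi_{Y'}/\deg\pi_X = \deg\varphi/\deg\psi$, so
\[
\frac{\deg\mathcal{M}}{\deg\pi_X} \;=\; \deg\varphi\cdot\frac{\deg\mathcal{L}}{\deg\psi} \;<\; \eps,
\]
and hence $\varphi^*(c)$ is almost zero. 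The main delicate point is the fibre-product step: verifying that $X \times_Y Y'$ possesses a component dominating both factors. This is automatic in characteristic zero where $k(X)\otimes_{k(Y)}k(Y')$ is already reduced, but in positive characteristic one has to kill the nilradical first (inseparability may force nilpotents). After this setup everything reduces to a diagram chase and a routine degree computation.
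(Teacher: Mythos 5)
Your proof is correct and follows essentially the same approach as the paper: pull back the annihilating data through (the normalization of a dominating component of) the reduced fibre product $X\times_Y Y'$. In fact your write-up is slightly more careful than the paper's, which states the result with a terse appeal to the fibre product but does not spell out the degree computation; your observation that one must invoke the almost-zero hypothesis with the rescaled bound $\eps/\deg\varphi$ and the identity $\deg\pi_{Y'}/\deg\pi_X = \deg\varphi/\deg\psi$ is exactly the missing bookkeeping, and your remark about killing nilpotents in positive characteristic corresponds to the paper's explicit use of the \emph{reduced} fibre product.
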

\begin{proof}
The implication from right to left is trivial. Conversely, assume that $c$ is almost zero. That is, for $\eps >0$ there is a finite morphism $\psi:Y' \to Y$ of smooth projective curves and a line bundle $\mathcal{L}$ on $Y'$ with a nonzero global section $s$ such that $s\psi^\ast(c) = 0$ and such that $\deg \mathcal{L}/\deg \psi < \eps$.

Consider the normalisation $Z$ of an irreducible component of the reduced fibre product $X \times_Y Y'$ and note that we have a surjection $\eta: Z \to Y$. In particular, $\eta$ is a finite morphism. And pulling back $\mathcal{L}$ to $Z$ we have that $\varphi^\ast(c)$ is annihilated by the pullback of $s$.

\[
\begin{xy}
 \xymatrix{Z\ar[dr] \\
& X \times_Y Y' \ar[r]^{p_1} \ar[d]_{p_2} & X \ar[d]_{\varphi} \\
& Y' \ar[r]^{\psi} & Y}
\end{xy}
 \]  
\end{proof}

To be able to make finite base changes we need the following stronger notion of almost zero for vector bundles. 

\begin{Def}
Let $Y$ be a smooth projective curve over an algebraically closed field and $\mathcal{S}$ a locally free sheaf on $Y$. We say that $\mathcal{S}$ is \emph{universally almost zero} if for every finite morphism $\varphi: Y' \to Y$ of smooth projective curves $\varphi^\ast \mathcal{S}$ is almost zero.
\end{Def}

\begin{Bem}
Any line bundle of non-negative degree on a smooth projective curve $Y$ over an algebraically closed field is universally almost zero. If $g(Y) \geq 1$ and the characteristic of the base is zero then a line bundle is almost zero if and only if it is universally so. We will see later (Corollary \ref{KoVectorbundleAlmostZero} and Remark \ref{UniversallyAlmostZeroPosChar}) that $\mathcal{O}_{\mathbb{P}^1}(-1)$ on $\mathbb{P}^1$ is essentially the only exception in characteristic zero: A vector bundle on a smooth projective curve of genus $g \geq 1$ over an algebraically closed field of characteristic zero is almost zero if and only if it is universally almost zero.

We do not know if this is true in positive characteristic if $g \geq 2$. It is however true for elliptic curves (see Remark \ref{UniversallyAlmostZeroPosChar}).
\end{Bem}

\begin{Le}
\label{Almostzerotwist}
Let $Y$ be a smooth projective curve over an algebraically closed field. Let $\mathcal{S}$ be a locally free sheaf and $\mathcal{L}$ a line bundle of positive degree. If $\mathcal{S}$ is universally almost zero then so is $\mathcal{S} \otimes \mathcal{L}$.
\end{Le}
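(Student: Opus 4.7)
The plan is to reduce the statement to the hypothesis on $\mathcal{S}$ itself by using a positive-degree line bundle to produce, after a further finite pullback, a skyscraper quotient that kills $H^1$. Fix an arbitrary finite morphism $\psi: Y' \to Y$ of smooth projective curves; I need to prove that $\psi^\ast(\mathcal{S} \otimes \mathcal{L})$ is almost zero. Pick a class $c' \in H^1(Y', \psi^\ast(\mathcal{S} \otimes \mathcal{L}))$ and $\eps > 0$.

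The first step is to find a section of (a suitable pullback of) $\mathcal{L}$ to construct the skyscraper sequence. Since $\deg(\psi^\ast\mathcal{L}) = \deg\psi \cdot \deg\mathcal{L} > 0$, the line bundle $\psi^\ast\mathcal{L}$ is ample on the smooth curve $Y'$. By Corollary \ref{globallygeneratedafterpullback} there exists a finite morphism $\tau_0: Y'' \to Y'$ of smooth projective curves such that $\tau_0^\ast\psi^\ast\mathcal{L}$ is generated by global sections. Pick a nonzero global section, whose vanishing locus $D$ is a finite set of points, yielding the short exact sequence
\[
0 \to \mathcal{O}_{Y''} \to \tau_0^\ast\psi^\ast\mathcal{L} \to \mathcal{F} \to 0,
\]
with $\mathcal{F}$ a skyscraper on $D$.

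The second step is to tensor with $(\psi\tau_0)^\ast\mathcal{S}$ and take cohomology. Since $\mathcal{F} \otimes (\psi\tau_0)^\ast\mathcal{S}$ is again a skyscraper and hence has vanishing $H^1$, the long exact sequence yields a surjection
\[
H^1\bigl(Y'',(\psi\tau_0)^\ast\mathcal{S}\bigr) \twoheadrightarrow H^1\bigl(Y'',(\psi\tau_0)^\ast(\mathcal{S} \otimes \mathcal{L})\bigr).
\]
Choose a preimage $c \in H^1(Y'',(\psi\tau_0)^\ast\mathcal{S})$ of $\tau_0^\ast c'$. Because $\mathcal{S}$ is universally almost zero and $\psi\tau_0: Y'' \to Y$ is finite, the class $c$ is almost zero. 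By persistence (Lemma \ref{Lazkes}) applied to the multiplication-by-section morphism $(\psi\tau_0)^\ast\mathcal{S} \to (\psi\tau_0)^\ast(\mathcal{S}\otimes\mathcal{L})$, the image $\tau_0^\ast c'$ is almost zero as well.

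The final step is to compose pullbacks and check the degree inequality. Since $\tau_0^\ast c'$ is almost zero, there exists a finite morphism $\tau_1: Y''' \to Y''$ of smooth projective curves and a line bundle $\mathcal{M}$ on $Y'''$ with a nonzero global section $s$ such that $\deg\mathcal{M}/\deg\tau_1 < \eps$ and $s \cdot \tau_1^\ast\tau_0^\ast c' = 0$. Setting $\tau = \tau_1\tau_0: Y''' \to Y'$ one has $\tau^\ast c' = \tau_1^\ast\tau_0^\ast c'$, and
\[
\frac{\deg\mathcal{M}}{\deg\tau} = \frac{\deg\mathcal{M}}{\deg\tau_0\cdot\deg\tau_1} \leq \frac{\deg\mathcal{M}}{\deg\tau_1} < \eps,
\]
so $c'$ is almost zero on $Y'$. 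Since both $\psi$ and $c'$ were arbitrary, $\mathcal{S} \otimes \mathcal{L}$ is universally almost zero. The only nontrivial input is making $\psi^\ast\mathcal{L}$ globally generated after a further finite pullback, which is exactly the content of Corollary \ref{globallygeneratedafterpullback}; everything else is a formal diagram chase combined with the vanishing of $H^1$ for skyscrapers.
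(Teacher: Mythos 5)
Your proof is correct and follows essentially the same route as the paper: reduce to $\mathcal{L}$ globally generated via Corollary \ref{globallygeneratedafterpullback}, observe that a nonzero section induces a surjection on $H^1$ because the cokernel of $\mathcal{S} \to \mathcal{S} \otimes \mathcal{L}$ is a torsion sheaf on a curve, and conclude by persistence (Lemma \ref{Lazkes}). You merely spell out more explicitly the bookkeeping of finite pullbacks needed to invoke the hypothesis "universally almost zero" and to verify the degree bound, which the paper's shorter proof leaves implicit.
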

\begin{proof}
Making a finite pullback we may assume that $\mathcal{L}$ is generated by global sections by virtue of Corollary \ref{globallygeneratedafterpullback}.
Assume that $\mathcal{S}$ is almost zero. Note that the map $H^1(Y, \mathcal{S}) \to H^1(Y, \mathcal{S} \otimes \mathcal{L})$ induced by $s \in H^0(Y, \mathcal{L})$, $s \neq 0$, is surjective. Indeed, $(\mathcal{S} \otimes \mathcal{L})/\mathcal{S}$ is a torsion sheaf on a curve. Hence, its first cohomology vanishes. Therefore, the claim follows from Lemma \ref{Lazkes}.% Now let $\mathcal{G}$ be a line bundle on some finite curve $\varphi: Y' \to Y$, where $Y'$ is smooth, with a non-trivial global section $t$ such that $\frac{\deg \mathcal{G}}{\deg  \varphi} < \eps$ and such that $H^1(Y, \mathcal{S})$ is annihilated by $t$. We obtain the following commutative diagram, where the composition of the top horizontal arrows annihilates every $c \in H^1(Y, \mathcal{S})$.
%\[ \begin{xy}
%   \xymatrix{ H^1(Y, \mathcal{S})\ar[d]^{\cdot s}\ar[r]^{\varphi^\ast}& H^1(Y',  \varphi^\ast \mathcal{S}) \ar[d]^{\cdot \varphi^\ast(s)} \ar[r]^{\cdot t} & H^1(Y', \varphi^\ast \mathcal{S} \otimes \mathcal{G}) \ar[d]^{\cdot \varphi^\ast(s)} \\
%H^1(Y, \mathcal{S} \otimes \mathcal{L}) \ar[r]^{\varphi^\ast} & H^1(Y', \varphi^\ast(\mathcal{S} \otimes \mathcal{L})) \ar[r]^{\cdot t} & H^1(Y',\varphi^\ast(\mathcal{S} \otimes \mathcal{L}) \otimes \mathcal{G})} 
%   \end{xy}
%\]
%Since the map $H^1(Y, \mathcal{S}) \to H^1(Y, \mathcal{S} \otimes \mathcal{L})$ is surjective it follows that the bottom right arrow annihilates every $c \in H^1(Y, \mathcal{S} \otimes \mathcal{L})$ showing that $\mathcal{S} \otimes \mathcal{L}$ is almost zero.
%Man hat eben im Grunde links noch H^1(S) und H^1(S \otimes L) ans Diagramm anzubauen
\end{proof}

Note that if we assume $\mathcal{L}$ to have a global section then the assertion of the lemma continues to hold if we replace ``universally almost zero'' by ``almost zero''.

\begin{Le}
\label{Lmaeh}
Let $Y$ be a smooth projective curve over an algebraically closed field. Let $0 \to \mathcal{S}' \to \mathcal{S} \to \mathcal{S}'' \to 0$ be a short exact sequence of locally free sheaves. If $\mathcal{S}'$ is universally almost zero and $\mathcal{S}''$ is almost zero then $\mathcal{S}$ is almost zero. Moreover, if $\mathcal{S}'$ and $\mathcal{S}''$ are universally almost zero then $\mathcal{S}$ is universally almost zero.
\end{Le}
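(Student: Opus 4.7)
The plan is to use a two-step annihilation, first killing the image in $H^1(Y,\mathcal{S}'')$ and then lifting and killing the resulting class in $H^1$ of the subbundle (suitably twisted). So fix $c \in H^1(Y,\mathcal{S})$ and $\varepsilon > 0$, and let $\bar{c} \in H^1(Y,\mathcal{S}'')$ be its image. Since $\mathcal{S}''$ is almost zero, there is a finite morphism $\varphi_1 \colon Y_1 \to Y$ of smooth projective curves and a line bundle $\mathcal{L}_1$ on $Y_1$ with nonzero global section $s_1$ satisfying $\deg \mathcal{L}_1/\deg \varphi_1 < \varepsilon/2$ and $s_1 \cdot \varphi_1^\ast(\bar c) = 0$ in $H^1(Y_1, \varphi_1^\ast \mathcal{S}'' \otimes \mathcal{L}_1)$.

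Pulling back the short exact sequence to $Y_1$ and tensoring by $\mathcal{L}_1$ gives a long exact cohomology sequence in which $s_1 \cdot \varphi_1^\ast(c) \in H^1(Y_1, \varphi_1^\ast \mathcal{S} \otimes \mathcal{L}_1)$ maps to zero. Hence $s_1 \cdot \varphi_1^\ast(c)$ lifts to a class $c_1 \in H^1(Y_1, \varphi_1^\ast \mathcal{S}' \otimes \mathcal{L}_1)$. Note that $\varphi_1^\ast \mathcal{S}'$ is universally almost zero, since for any finite $\psi \colon Z \to Y_1$ the composite $\varphi_1 \psi$ is again finite over $Y$. Because $s_1 \neq 0$ on a curve, either $\deg \mathcal{L}_1 > 0$, in which case Lemma~\ref{Almostzerotwist} shows that $\varphi_1^\ast \mathcal{S}' \otimes \mathcal{L}_1$ is universally almost zero, or $\deg \mathcal{L}_1 = 0$, which forces $\mathcal{L}_1 \cong \mathcal{O}_{Y_1}$ and then $\varphi_1^\ast \mathcal{S}' \otimes \mathcal{L}_1 = \varphi_1^\ast \mathcal{S}'$. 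Either way, $\varphi_1^\ast \mathcal{S}' \otimes \mathcal{L}_1$ is almost zero.

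Therefore we find a finite morphism $\varphi_2 \colon Y_2 \to Y_1$ of smooth projective curves and a line bundle $\mathcal{L}_2$ on $Y_2$ with nonzero global section $s_2$ such that $\deg \mathcal{L}_2/\deg \varphi_2 < \varepsilon/2$ and $s_2 \cdot \varphi_2^\ast(c_1) = 0$. Pulling $s_1$ back to $Y_2$ under the finite dominant morphism $\varphi_2$ keeps it nonzero (the generic point maps to the generic point), so $s := s_2 \otimes \varphi_2^\ast(s_1)$ is a nonzero global section of $\varphi_2^\ast \mathcal{L}_1 \otimes \mathcal{L}_2$. By construction $s \cdot (\varphi_1 \varphi_2)^\ast(c) = 0$, and the required slope bound is
\[
\frac{\deg(\varphi_2^\ast \mathcal{L}_1 \otimes \mathcal{L}_2)}{\deg(\varphi_1 \varphi_2)} = \frac{\deg \mathcal{L}_1}{\deg \varphi_1} + \frac{\deg \mathcal{L}_2}{\deg \varphi_1 \cdot \deg \varphi_2} \leq \frac{\varepsilon}{2} + \frac{\varepsilon}{2} = \varepsilon.
\]
This shows that $c$ is almost zero, and hence that $\mathcal{S}$ is almost zero.

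For the ``moreover'' part, given any finite morphism $\psi \colon X \to Y$ of smooth projective curves, the pulled-back sequence $0 \to \psi^\ast \mathcal{S}' \to \psi^\ast \mathcal{S} \to \psi^\ast \mathcal{S}'' \to 0$ still has the property that $\psi^\ast \mathcal{S}'$ is universally almost zero (by the composition argument above) and $\psi^\ast \mathcal{S}''$ is universally almost zero, in particular almost zero. Applying the first part to this pulled-back sequence shows $\psi^\ast \mathcal{S}$ is almost zero, so $\mathcal{S}$ is universally almost zero. The main technical point is verifying that the twist $\varphi_1^\ast \mathcal{S}' \otimes \mathcal{L}_1$ from step one remains almost zero so that step two can be applied; this is where universal almost zero for $\mathcal{S}'$ (rather than just almost zero) is essential.
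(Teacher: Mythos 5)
Your proof is correct and follows essentially the same two-step annihilation strategy as the paper's: first annihilate the image in $H^1(Y,\mathcal{S}'')$, lift the resulting class to $H^1$ of the twisted subbundle, and then annihilate that using universal almost-zeroness of $\mathcal{S}'$. You are slightly more careful than the paper in one place: the paper tacitly writes $0 < \deg\mathcal{L}/\deg\varphi$ so that Lemma~\ref{Almostzerotwist} applies directly, whereas you explicitly separate the case $\deg\mathcal{L}_1 = 0$ (forcing $\mathcal{L}_1\cong\mathcal{O}_{Y_1}$), which closes a small gap.
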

\begin{proof}
Fix $c \in H^1(Y, \mathcal{S})$. Consider the image of $c$ in $H^1(Y, \mathcal{S''})$ and annihilate this by a non-trivial global section $s$ of some line bundle $\mathcal{L}$ over $\varphi: Y' \to Y$ such that $0 < \frac{\deg \mathcal{L}}{\deg \varphi} < \frac{\eps}{2}$. It follows that the image of $c$ in $H^1(Y',\varphi^\ast \mathcal{S}'' \otimes \mathcal{L})$ is zero. Hence we find a preimage $c'$ in $H^1(Y', \varphi^\ast\mathcal{S}' \otimes \mathcal{L})$. By Lemma \ref{Almostzerotwist}, $\varphi^\ast\mathcal{S}' \otimes \mathcal{L}$ is almost zero.
Consequently, we find a line bundle $\mathcal{G}$ on some finite curve $\psi: Y'' \to Y'$, where $Y''$ is smooth, with a global section such that $\frac{\deg \mathcal{G}}{\deg \varphi \psi} < \frac{\eps}{2}$ and such that $c'$ is annihilated by a non-trivial global section $t$ of $\mathcal{G}$. It follows that the image of $c$ is zero in $H^1(Y'', \psi^\ast(\varphi^\ast  \mathcal{S} \otimes \mathcal{L}) \otimes \mathcal{G})$. We illustrate the situation with the following commutative diagram where we have omitted the pullbacks.
 
\[ \begin{xy} \xymatrix{H^1(Y,\mathcal{S}') \ar[r] \ar[d]^{\cdot s} & H^1(Y, \mathcal{S}) \ar[d]^{\cdot s} \ar[r] & H^1(Y, \mathcal{S}'') \ar[d]^{\cdot s}\\
    H^1(Y',\mathcal{S}' \otimes \mathcal{L})  \ar[r] \ar[d]^{\cdot t}& H^1(Y',\mathcal{S} \otimes \mathcal{L}) \ar[r] \ar[d]^{\cdot t} & H^1(Y',\mathcal{S}'' \otimes \mathcal{L}) \ar[d]^{\cdot t}\\
	H^1(Y'', \mathcal{S}' \otimes \mathcal{L} \otimes \mathcal{G}) \ar[r]  & H^1(Y'', \mathcal{S} \otimes \mathcal{L} \otimes \mathcal{G}) \ar[r] & H^1(Y'', \mathcal{S}'' \otimes \mathcal{L} \otimes \mathcal{G})}
   \end{xy}
\]
The supplement follows via a similar argument.
\end{proof}

%We now have enough tools at our disposal to extend Proposition \ref{Lantisupp} to characteristic zero for an arbitrary smooth projective base curve. The proof will be by induction on the rank using the fact that a vector bundle is a successive extension of vector bundles of lower ranks. Since strong semistability is not preserved when passing to quotients we will relax the assumptions to $\bar{\mu}_{\min} \geq 0$.

\begin{Prop}
\label{Pmuh}
Let $Y$ be a smooth projective curve over an algebraically closed field $k$. Let $\mathcal{S}$ denote a vector bundle over $Y$ with $\bar{\mu}_{\min}(\mathcal{S}) \geq 0$. Then $\mathcal{S}$ is universally almost zero.
\end{Prop}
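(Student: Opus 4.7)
The plan is to reduce via a strong Harder--Narasimhan filtration to the case of a strongly semistable bundle of non-negative slope, and then to apply a Seshadri/Bertini-type construction in the spirit of Proposition \ref{paramaz}.

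First I would establish the scaling law $\bar{\mu}_{\min}(\varphi^\ast \mathcal{S}) = (\deg \varphi)\,\bar{\mu}_{\min}(\mathcal{S})$ for every finite dominant $\varphi \colon Y' \to Y$. The non-trivial inequality follows from a fibre product argument: given any finite dominant $\eta \colon Z \to Y$, the normalisation $\tilde Z$ of an irreducible component of $Z \times_Y Y'$ dominating $Y$ yields finite morphisms to both $Z$ and $Y'$, and sub-multiplicativity of $\mu_{\min}$ under further pullback gives the desired comparison. In particular $\bar{\mu}_{\min}(\mathcal{S}) \ge 0$ is stable under finite pullback, so it is enough to prove that $\mathcal{S}$ itself is almost zero; applying the same reasoning to every $\pi^\ast \mathcal{S}$ will then yield universal almost-zero-ness.

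By Langer's theorem I choose $e \ge 0$ so that $F^{e\ast}\mathcal{S}$ admits a strong Harder--Narasimhan filtration $0 = \mathcal{S}_0 \subset \cdots \subset \mathcal{S}_t = F^{e\ast}\mathcal{S}$ with strongly semistable quotients $\mathcal{S}_i/\mathcal{S}_{i-1}$ of slopes $\mu_1 > \cdots > \mu_t$. The identity $\mu_t/\deg(F^e) = \bar{\mu}_{\min}(\mathcal{S})$ recalled in Section~1 forces $\mu_i \ge 0$ for every $i$. Iterating Lemma \ref{Lmaeh} along this filtration reduces universal almost-zero-ness of $F^{e\ast}\mathcal{S}$ to the same property for each strongly semistable quotient. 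A Frobenius descent argument---if $\psi^\ast(F^{e\ast} c)$ is annihilated on a finite $\psi \colon Y' \to Y$ by a section with ratio $\deg \mathcal{L}/\deg \psi < \eps$, then the composite $F^e \circ \psi$ annihilates $c$ with ratio $< \eps/\deg(F^e)$---transfers the conclusion from $F^{e\ast}\mathcal{S}$ back to $\mathcal{S}$. The problem is thus reduced to the base case: every strongly semistable bundle $\mathcal{T}$ on $Y$ with $\mu(\mathcal{T}) \ge 0$ is universally almost zero.

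For the base case, pullback preserves both strong semistability and non-negative slope, so it suffices to prove $\mathcal{T}$ is almost zero. Given $c \in H^1(Y,\mathcal{T})$ with associated extension $0 \to \mathcal{T} \to \mathcal{T}' \to \mathcal{O}_Y \to 0$, the geometric equivalence (iii) of Theorem \ref{Lazas} translates the statement into producing, for every $\eps > 0$, a smooth irreducible curve $C \subset \mathbb{P}(\mathcal{T}'^\vee)$, finite over $Y$ and not contained in the forcing divisor $\mathbb{P}(\mathcal{T}^\vee)$, with $C.\mathbb{P}(\mathcal{T}^\vee)/\deg(C \to Y) < \eps$. The dual sequence $0 \to \mathcal{O}_Y \to \mathcal{T}'^\vee \to \mathcal{T}^\vee \to 0$ exhibits $\mathbb{P}(\mathcal{T}^\vee)$ as a forcing divisor coming from a quotient of slope $\le 0$ together with a canonical $\mathcal{O}_Y$-section, and shows $\mathcal{T}'^\vee$ is not ample; a Bertini-type cutting in a large linear system with divisor class of the form $a\,\xi + bf + \cdots$ on $\mathbb{P}(\mathcal{T}'^\vee)$, with $a$ growing, should then yield the desired curves. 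The main obstacle---and where the bulk of the technical work resides---is implementing this in rank $\ge 2$: one must control both the normalised intersection with the higher-codimensional forcing divisor and the support condition that the chosen curve is not contained in $\mathbb{P}(\mathcal{T}^\vee)$, for which the rank-one argument of Proposition \ref{paramaz} alone does not suffice.
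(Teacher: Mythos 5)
Your reduction scheme — scaling law for $\bar{\mu}_{\min}$ under finite pullback, Langer's theorem to get a strong Harder--Narasimhan filtration with all quotient slopes $\geq 0$, iteration of Lemma \ref{Lmaeh}, and Frobenius descent from $F^{e\ast}\mathcal{S}$ to $\mathcal{S}$ — is internally correct, and the individual steps check out. But the plan stalls exactly where you flag it: the ``base case'' that a strongly semistable $\mathcal{T}$ with $\mu(\mathcal{T})\geq 0$ is (universally) almost zero is \emph{not} easier than the proposition you are trying to prove. You propose to settle it by Theorem \ref{Lazas}(iii), i.e.\ by producing curves in $\mathbb{P}(\mathcal{T}'^\vee)$ of small normalised intersection with the forcing divisor, avoiding $\mathbb{P}(\mathcal{T}^\vee)$; that requires controlling high-codimension intersections and the support condition in rank $\geq 2$, and you correctly observe that the rank-one argument of Proposition \ref{paramaz} does not extend. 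In fact the paper derives the strongly semistable statement (Corollary \ref{almostzerostronglysemistablecase}) \emph{from} Proposition \ref{Pmuh}, not towards it, so reducing to it is not a genuine simplification.

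The paper takes a different decomposition: induction on $\rk\mathcal{S}$. Since $\bar{\mu}_{\min}(\mathcal{S})\geq 0$ forces $\mathcal{S}^\vee$ not ample, \cite[Theorem 2.3]{brennerslope} produces a finite $\varphi\colon Y'\to Y$ and a line-bundle quotient $\varphi^\ast\mathcal{S}^\vee\twoheadrightarrow\mathcal{L}$ with $\deg\mathcal{L}/\deg\varphi$ arbitrarily small; dualising gives a line subbundle $\mathcal{L}^\vee\hookrightarrow\varphi^\ast\mathcal{S}$ with quotient $\mathcal{G}$ of rank $n-1$ and $\bar{\mu}_{\min}(\mathcal{G})\geq 0$. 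If $\deg\mathcal{L}^\vee\geq 0$ one applies the inductive hypothesis to $\mathcal{G}$ and Lemma \ref{Lmaeh} to the sequence; if $\deg\mathcal{L}^\vee<0$ one twists by a small ample line bundle $\mathcal{M}$ to restore non-negativity and unwinds. The point is that this keeps the whole argument inside the vector-bundle calculus of Lemmas \ref{Lmaeh}--\ref{Almostzerotwist} and never needs to find curves on a higher-dimensional $\mathbb{P}$-bundle or address the support condition anywhere in the proof. If you want to salvage your scheme, replace the Bertini step at the bottom of the filtration by precisely this rank-reduction device: apply \cite[Theorem 2.3]{brennerslope} to the strongly semistable $\mathcal{T}$ and induct on rank there as well. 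But at that point the strong-HN preprocessing layer is doing no work, which is why the paper runs the induction on the original bundle directly.
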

\begin{proof}
Note that $\deg \mathcal{S} \geq 0$. We do induction on the rank $n$ of $\mathcal{S}$. For $n = 1$ the result follows by Proposition \ref{paramaz}.
So assume that $n > 1$. By \cite[Theorem 2.3]{brennerslope}, $\mathcal{S}^\vee$ is not ample. Let $\eps > 0$, also by \cite[Theorem 2.3]{brennerslope} we find a finite morphism of smooth curves $\varphi: Y' \to Y$ together with a line bundle $\mathcal{L}$ on $Y'$ such that $\frac{\deg \mathcal{L}}{\deg \varphi} < \frac{\eps}{3}$ and an exact sequence $\varphi^\ast \mathcal{S}^\vee \to \mathcal{L} \to 0$. Dually this yields an exact sequence $0 \to \mathcal{L}^\vee \to \varphi^\ast \mathcal{S} \to \mathcal{G} \to 0$ for some locally free sheaf $\mathcal{G}$ on $Y'$. Note that $\bar{\mu}_{\min}(\mathcal{G}) \geq 0$. If $\deg \mathcal{L}^\vee \geq 0$ then $\mathcal{L}^\vee$ is universally almost zero and applying the induction hypothesis to $\mathcal{G}$ and then Lemma \ref{Lmaeh} to the exact sequence we are done. 

So assume that $\deg \mathcal{L}^\vee < 0$. Choose $\mathcal{M} \in \Pic Y'$ such that $0 < \frac{\deg \mathcal{L}^\vee \otimes \mathcal{M}}{\deg \varphi} < \frac{\eps}{3}$ (e.\,g.\ $\mathcal{M} = \mathcal{L}^2$ will do). In particular, $\mathcal{M}$ is ample since $\deg \mathcal{M} > 0$. By Corollary \ref{globallygeneratedafterpullback} we may therefore assume that $\mathcal{M}$ is generated by global sections. Tensoring with $\mathcal{M}$ we obtain an exact sequence \[\begin{xy}\xymatrix{0 \ar[r]& \mathcal{L}^\vee \otimes \mathcal{M} \ar[r]& \varphi^\ast \mathcal{S} \otimes \mathcal{M} \ar[r]& \mathcal{G} \otimes \mathcal{M} \ar[r]& 0.}\end{xy}\]Therefore as in the first case $\varphi^\ast \mathcal{S} \otimes \mathcal{M}$ is universally almost zero. Since $\mathcal{M}$ has a nonzero global section $s$ we have an induced morphism $\varphi^\ast \mathcal{S} \to \varphi^\ast \mathcal{S} \otimes \mathcal{M}$. Annihilating $H^1(Y',\varphi^\ast \mathcal{S} \otimes \mathcal{M})$ by a non-trivial global section $t$ of some line bundle $\mathcal{N}$ on a smooth curve $\psi: Y'' \to Y'$ finite over $Y'$ such that $\deg \mathcal{N}/\deg \psi < \frac{\eps}{3}$ yields that $\varphi^\ast \mathcal{S}$ is almost zero. Indeed, $\psi^\ast(s) \otimes t \in H^0(Y'', \psi^\ast \mathcal{M} \otimes \mathcal{N})$ annihilates every $c \in H^1(Y', \varphi^\ast \mathcal{S})$ and $\deg \psi^\ast \mathcal{M} \otimes \mathcal{N}/\deg \psi < \eps$.

By the same token, $\mathcal{S}$ is universally almost zero.
\end{proof}

\begin{Bem}
\label{BemLmaehUmkehrung}
The converse of Lemma \ref{Lmaeh} is false. To see this consider the sheaf of differentials on $Y =\mathbb{P}^1_k$ which is isomorphic to $\mathcal{O}_Y(-2)$, where $k$ is an algebraically closed field. Twisting its presenting sequence by $\mathcal{O}_Y(1)$ we have an exact sequence \[0 \to \mathcal{O}_Y(-1) \to \mathcal{O}_Y \oplus \mathcal{O}_Y \to \mathcal{O}_Y(1) \to 0  \quad \text{(cf.\ \cite[Theorem II.8.13]{hartshornealgebraic})}.\]
Consider the homomorphism $k[x,y] \to k[s,t]$ defined by $x \mapsto s^d, y \mapsto t^d$. This is a finite injective homomorphism and if we attach to both rings the ordinary grading we obtain a finite dominant morphism $\varphi_d: Y' \to Y$ such that $\varphi^\ast\mathcal{O}_Y(1) = \mathcal{O}_{Y'}(d)$ by virtue of Lemma \ref{Pullback}. Note that both $Y'$ and $Y$ are isomorphic to $\mathbb{P}^1_k$.
Pulling back along $\varphi_d$ ($d >1$) one obtains that both $\mathcal{O}_{Y'} \oplus \mathcal{O}_{Y'}$ and $\mathcal{O}_{Y'}(d)$ are strongly semistable of degree $\geq 0$, hence universally almost zero. But $\mathcal{O}_{Y'}(-d)$ is strongly semistable of negative degree. Hence, it is not almost zero if $\chara k = 0$ since $d > 1$ (cf.\ Theorem \ref{Plbaz}). If the characteristic of the base is positive this counterexample continues to hold. Indeed, denote by $\mathcal{E}$ the extension defined by the class $c$. Then $0 \to \mathcal{O}_{Y'}(-d) \to \mathcal{E} \to \mathcal{O}_{Y'} \to 0$ is a strong Harder-Narasimhan filtration of $\mathcal{E}$ and $\mathcal{O}_{Y'}(-d)$ is not almost zero by Remark \ref{BemAzLinebundlesPos}.
%The converse does hold however if (in the notation of Lemma \ref{Lmaeh}) $\mathcal{S}''$ has no nonzero global sections. Or more generally if the morphism $H^0(Y, \mathcal{S}'') \to H^1(Y,\mathcal{S}')$ is zero. <- Bloedsinn?
\end{Bem}

\begin{Ko}
\label{almostzerostronglysemistablecase}
Let $Y$ be a smooth projective curve over an algebraically closed field $k$ and $\mathcal{S}$ a strongly semistable vector bundle. Then $c \in H^1(Y, \mathcal{S})$ is not almost zero if and only if $\deg \mathcal{S} < 0$ and $c \neq 0$ (in positive characteristic ${F^e}^\ast(c) \neq 0$ for all Frobenius powers $F^e$).
\end{Ko}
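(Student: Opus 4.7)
The plan is to split into the two directions of the equivalence, handling each by reducing to results already established in the paper.

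For the direction showing that the failure of the stated conditions forces $c$ to be almost zero, I would treat three subcases. If $\deg \mathcal{S} \geq 0$, strong semistability gives $\bar{\mu}_{\min}(\mathcal{S}) = \mu(\mathcal{S}) \geq 0$, so Proposition \ref{Pmuh} applies and says that $\mathcal{S}$ is universally almost zero; in particular $c$ is almost zero. If $c = 0$, the almost-zero condition is trivially satisfied by $\varphi = \mathrm{id}$, $\mathcal{L} = \mathcal{O}_Y$, $s = 1$. If we are in positive characteristic and some Frobenius pullback $F^{e\ast}(c)$ vanishes, I would use the Frobenius $F^e: Y \to Y$ itself as the finite morphism, together with the nonzero section $1$ of $\mathcal{O}_Y$; since $\deg \mathcal{O}_Y/\deg F^e = 0 < \eps$, the required condition is met.

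For the other direction, assume $\deg \mathcal{S} < 0$ and that no Frobenius power annihilates $c$ (reducing to $c \neq 0$ in characteristic zero). The key step will be to deduce that $\mathcal{S}'^\vee$ is ample. By the characterization recalled in the introduction and \cite[Proposition 2.1]{brennertightplus}, in the strongly semistable setting $\mathcal{S}'^\vee$ fails to be ample precisely when $\mu(\mathcal{S}) \geq 0$ and $F^{e\ast}(c) \neq 0$ for every $e$; our hypothesis $\mu(\mathcal{S}) < 0$ rules this out, so $\mathcal{S}'^\vee$ is ample. Then the ampleness criterion \cite[Theorem 2.3]{brennerslope} gives $\bar{\mu}_{\min}(\mathcal{S}'^\vee) > 0$.

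Finally I would apply Theorem \ref{Lazas} to rule out almost-zero: any almost-zero witness would provide, for $\eps$ arbitrarily small, a finite morphism $\varphi: Y' \to Y$ and a line bundle quotient $\varphi^\ast \mathcal{S}'^\vee \to \mathcal{L}$ with $\deg \mathcal{L}/\deg \varphi < \eps$. However such a quotient satisfies $\deg \mathcal{L}/\deg \varphi \geq \mu_{\min}(\varphi^\ast \mathcal{S}'^\vee)/\deg \varphi \geq \bar{\mu}_{\min}(\mathcal{S}'^\vee) > 0$, so choosing $\eps$ below this positive lower bound yields a contradiction. The main obstacle is really just importing the ampleness characterization for strongly semistable $\mathcal{S}$; once $\mathcal{S}'^\vee$ is known to be ample the rest is slope bookkeeping through Theorem \ref{Lazas}.
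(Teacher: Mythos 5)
Your forward direction is fine and essentially matches the paper: in the case $\deg\mathcal{S}\geq 0$ you invoke Proposition \ref{Pmuh} exactly as the paper does, and the observation that a vanishing Frobenius pullback (or $c=0$) yields an almost-zero witness directly is the same ``clearly'' that the paper leaves implicit. The backward direction is where you genuinely diverge: the paper derives the ampleness of $\mathcal{S}'^\vee$ from scratch --- first $\bar\mu_{\min}(\mathcal{S}^\vee)>0$ gives $\mathcal{S}^\vee$ ample via \cite[Theorem 2.3]{brennerslope}, then \cite[Proposition 2.2]{giesekerample} applied to the non-trivial extension shows every quotient of $\mathcal{S}'^\vee$ has positive degree, and stability of non-triviality under Frobenius promotes this to $\bar\mu_{\min}(\mathcal{S}'^\vee)>0$ --- whereas you import the ampleness criterion wholesale from \cite[Proposition 2.1]{brennertightplus}. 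That is a legitimate shortcut, but your invocation of it contains a real logical slip.

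You state that $\mathcal{S}'^\vee$ ``fails to be ample precisely when $\mu(\mathcal{S})\geq 0$ \emph{and} $F^{e\ast}(c)\neq 0$ for every $e$,'' and then conclude ampleness from $\mu(\mathcal{S})<0$ alone. That characterisation cannot be right: if $F^{e\ast}(c)=0$ for some $e$ then $F^{e\ast}\mathcal{S}'$ splits as $F^{e\ast}\mathcal{S}\oplus\mathcal{O}_Y$, so $F^{e\ast}\mathcal{S}'^\vee$ has the quotient $\mathcal{O}_Y$ and is not ample, hence $\mathcal{S}'^\vee$ itself is not ample --- regardless of $\mu(\mathcal{S})$. (In particular, taking $c=0$ and $\mu(\mathcal{S})\geq 0$ already violates your version of the criterion.) The correct statement is that $\mathcal{S}'^\vee$ is ample if and only if $\deg\mathcal{S}<0$ \emph{and} $F^{e\ast}(c)\neq 0$ for all $e$; equivalently, it fails to be ample when $\mu(\mathcal{S})\geq 0$ \emph{or} some Frobenius pullback of $c$ vanishes. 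You do have both hypotheses available in your backward direction, so your conclusion survives, but to conclude ampleness you must actually cite the Frobenius hypothesis as well as $\mu<0$ --- you cannot extract it from the slope condition alone. The paper's argument, by contrast, uses the two hypotheses transparently and separately (degree for $\mathcal{S}^\vee$ ample, Frobenius nonvanishing to keep the Gieseker quotient bound intact under pullback), which is why it is more robust. Once you repair the criterion, the final slope bookkeeping through Theorem \ref{Lazas} is correct as written.
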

\begin{proof}
Assume that $c$ is not almost zero. Thus Proposition \ref{Pmuh} implies that $\deg \mathcal{S} < 0$ and we must clearly have that ${F^e}^\ast(c) \neq 0$ for all Frobenius powers $F^e$.

For the other direction assume that $\deg \mathcal{S} < 0$ and that ${F^e}^\ast(c) \neq 0$ for all $e$.  Since $\bar{\mu}_{\min}(\mathcal{S}^\vee) > 0$, we have that $\mathcal{S}^\vee$ is ample by \cite[Theorem 2.3]{brennerslope}. Denote by $\mathcal{S}'$ the extension of $\mathcal{O}_Y$ by $\mathcal{S}$ defined by $c$. Since $c$ defines a non-trivial extension, it follows from \cite[Proposition 2.2]{giesekerample} that every quotient of $\mathcal{S}'^\vee$ has positive degree. As the extension does stay non-trivial for all Frobenius pullbacks we have $\bar{\mu}_{\min}(\mathcal{S}'^\vee) > 0$. Hence, again by \cite[Theorem 2.3]{brennerslope} we have that $\mathcal{S}'^\vee$ is ample and therefore $c$ is not almost zero by Theorem \ref{Lazas}.
\end{proof}

Now we can finally prove
\begin{Theo}
\label{Daggersolidsst}
Let $R$ be a normal standard graded two-dimensional integral $k$-algebra, where $k$ is an algebraically closed field of arbitrary characteristic. Moreover, let $I$ be an $R_+$-primary homogeneous ideal with homogeneous generators $f_1, \ldots, f_n$ and assume that $\Syz(f_1, \ldots, f_n)$ on $\Proj R$ is strongly semistable. Then we have 
\[(f_1, \ldots, f_n)^{\dagger \text{GR}} = (f_1, \ldots, f_n)^\star. \]
\end{Theo}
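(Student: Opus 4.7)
The strategy is to chain together three results already in place: Theorem \ref{almostzerodagger} (which translates membership in $I^{\dagger \text{GR}}$ into almost vanishing of the cohomology class $c = \delta(f_0)$), Corollary \ref{almostzerostronglysemistablecase} (which, for strongly semistable $\mathcal{S}$, reduces ``almost zero'' to a slope and Frobenius condition on $c$), and the geometric description of solid closure recalled in the introduction (for strongly semistable $\mathcal{S}$, $f_0 \in I^\star$ is equivalent to $\mathcal{S}'^\vee$ being non-ample). Since both closures contain $I$ and, by Lemma \ref{daggerhomogeneous0} together with Corollary \ref{solidhomogeneous}, are homogeneous, it suffices to check the desired equality on homogeneous $f_0 \notin I$, so that $c = \delta(f_0) \neq 0$.

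Set $m = \deg f_0$ and $\mathcal{S} = \Syz(f_1,\ldots,f_n)(m)$, which is still strongly semistable (twisting preserves this), and let $\mathcal{S}'$ be the extension of $\mathcal{O}_Y$ by $\mathcal{S}$ classified by $c$. Combining Theorem \ref{almostzerodagger} with Corollary \ref{almostzerostronglysemistablecase}, one obtains $f_0 \in I^{\dagger \text{GR}}$ if and only if $\deg \mathcal{S} \geq 0$ or ${F^e}^\ast(c) = 0$ for some $e$ (the second alternative is vacuous in characteristic zero). The claim then reduces to showing that this cohomological condition on $c$ is equivalent to $\mathcal{S}'^\vee$ failing to be ample.

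For the forward direction, if $\mu(\mathcal{S}) \geq 0$, then dualising $0 \to \mathcal{S} \to \mathcal{S}' \to \mathcal{O}_Y \to 0$ realises $\mathcal{S}^\vee$ as a quotient of $\mathcal{S}'^\vee$ of non-positive degree; if ${F^e}^\ast(c) = 0$ for some $e$, the pulled-back sequence splits and $\mathcal{O}_Y$ appears as a degree-zero quotient of ${F^e}^\ast \mathcal{S}'^\vee$. In either case ampleness is obstructed, so $\mathcal{S}'^\vee$ is not ample and hence $f_0 \in I^\star$. (In the Frobenius-split branch one can alternatively appeal directly to $f_0 \in I^F \subseteq I^\star$ in positive characteristic.)

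For the reverse implication I would argue by contraposition, mirroring the closing paragraph of the proof of Corollary \ref{almostzerostronglysemistablecase}: assume $\mu(\mathcal{S}) < 0$ and ${F^e}^\ast(c) \neq 0$ for all $e$. Then $\mathcal{S}^\vee$ is strongly semistable of positive slope, hence ample by \cite[Theorem 2.3]{brennerslope}, and every Frobenius pullback of $0 \to \mathcal{O}_Y \to \mathcal{S}'^\vee \to \mathcal{S}^\vee \to 0$ remains non-trivial. Gieseker's \cite[Proposition 2.2]{giesekerample} then forces every quotient of each ${F^e}^\ast\mathcal{S}'^\vee$ to have positive degree, giving $\bar{\mu}_{\min}(\mathcal{S}'^\vee) > 0$; a second application of \cite[Theorem 2.3]{brennerslope} shows that $\mathcal{S}'^\vee$ is ample, whence $f_0 \notin I^\star$. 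The genuine work has already been done in Theorem \ref{almostzerodagger} and Corollary \ref{almostzerostronglysemistablecase}; the only obstacle here is careful bookkeeping in positive characteristic so that the ``${F^e}^\ast(c) = 0$'' branch is tracked consistently on both sides of the claimed equality.
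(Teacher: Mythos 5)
Your proposal is correct and rests on exactly the same three ingredients as the paper's own (very terse) proof: Theorem \ref{almostzerodagger}, Corollary \ref{almostzerostronglysemistablecase}, and the geometric characterisation of solid closure from \cite[Proposition 2.1]{brennertightplus}; the only difference is that you unfold in detail the equivalence ``cohomological condition on $c$ $\Leftrightarrow$ $\mathcal{S}'^\vee$ non-ample'', whereas the paper simply cites \cite[Proposition 2.1]{brennertightplus} for this and the reverse direction you spell out is already contained verbatim in the proof of Corollary \ref{almostzerostronglysemistablecase}.
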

\begin{proof}
Combine Theorem \ref{almostzerodagger}, Corollary \ref{almostzerostronglysemistablecase} and \cite[Proposition 2.1]{brennertightplus}.
%Fix a homogeneous element $f_0 \in R$ of degree $d_0$. We write $\mathcal{S} = \Syz(f_1, \ldots, f_n)(d_0)$ and $c$ for $\delta(f_0) \in H^1(\Proj R, \mathcal{S})$. 
%For the inclusion from left to right we only have to show that if $f_0 \in I^{\dagger \text{GR}}$ then $f_0 \in I^{\star}$. We 
%apply Theorem \ref{almostzerodagger} to see that $c$ is almost zero if $f_0 \in I^{\dagger \text{GR}}$. Hence, by Corollary %\ref{almostzerostronglysemistablecase} and \cite[Proposition 2.1]{brennertightplus} the result follows. 
%For the other inclusion assume that $f_0 \in (f_1, \ldots, f_n)^\star$. It is enough to show that $\mathcal{S}$ is almost zero. By \cite[Proposition 2.1]{brennertightplus} we have that $\mu (\Syz(f_1, \ldots, f_n)(d_0) )\geq 0$ or that the pullback along some Frobenius power $F^e$ annihilates $c$\footnote{We remind the reader of our convention that $F = id$ if $\chara k = 0$.}. In the first case the result follows from Proposition \ref{Pmuh} and in the second case $c$ is clearly almost zero.
\end{proof}

\section{Geometric reductions}
\label{Reductions}
Assume Situation \ref{Sit1}. We now want to reduce the issue whether $(f_1, \ldots, f_n)^\star = (f_1, \ldots, f_n)^{\dagger \text{GR}}$ to the strongly semistable case using a strong Harder-Narasimhan filtration. We will need to look at the cohomology class $c$ defined by the image of $f_0$ via the connecting homomorphism $H^0(Y,\mathcal{O}_Y(d_0)) \to H^1(Y,\mathcal{S})$.

In \cite{brennertightplus} this reduction along a strong Harder-Narasimhan filtration is carried out for solid closure and we will follow the arguments there suitably adopted to our situation.
Let $\mathcal{S}$ be a vector bundle on a smooth projective curve over an algebraically closed field $k$ and $\mathcal{S}_1 \subset \ldots \subset \mathcal{S}_t = {F^e}^\ast \mathcal{S}$ a strong Harder-Narasimhan filtration.
We will need to look at the maximal $i$ such that $\mu(\mathcal{S}_i/\mathcal{S}_{i-1}) \geq 0$. If $\mu(\mathcal{S}_j/\mathcal{S}_{j-1}) < 0$ for all $j=1, \ldots, t$ then set $i = 0$ and $\mathcal{S}_0 = \mathcal{S}_{-1} = 0$ and if $\mu(\mathcal{S}_j/\mathcal{S}_{j-1}) \geq 0$ for all $j=1, \ldots, t$ then set $i = t+1$ and $\mathcal{S}_{t+1} = \mathcal{S}_t = {F^e}^\ast\mathcal{S}$. 
We recall that if the characteristic of the field is zero then the Frobenius is replaced by the identity.

%\begin{Theo}
%\label{Holgertight}
%Let $Y$ be a smooth projective curve over an algebraically closed field $k$. Let $\mathcal{S}$ be a locally free sheaf on $Y$ and $c \in H^1(Y, \mathcal{S})$ with corresponding torsor $T \to Y$. Let $\mathcal{S}_1 \subset \ldots \subset \mathcal{S}_t= {F^e}^\ast\mathcal{S}$ be a strong Harder-Narasimhan filtration on $Y$ for a suitable $e$. Choose $i$ such that $\mathcal{S}_i/\mathcal{S}_{i-1}$ has degree $\geq 0$ and such that $\mathcal{S}_{i+1}/\mathcal{S}_i$ has degree $<0$. Let $0 \to \mathcal{S}_i \to {F^e}^\ast \mathcal{S} \to {F^e}^\ast \mathcal{S}/\mathcal{S}_i = \mathcal{Q} \to 0$. Then the following are equivalent:
%\begin{enumerate}[(i)]
%\item{$T$ is not an affine scheme.}
%\item{Some Frobenius power of the image of ${F^e}^\ast(c)$ in $H^1(Y, \mathcal{Q})$ is zero.}
%\end{enumerate}
%\end{Theo}
%\begin{proof}
%See \cite[Theorem 2.3]{brennertightplus}. 
%\end{proof}

\begin{Theo}
\label{HNFaztight}
Let $Y$ be a smooth projective curve over an algebraically closed field $k$. Let $\mathcal{S}$ be a locally free sheaf on $Y$ and $c \in H^1(Y, \mathcal{S})$. Let $\mathcal{S}_1 \subset \ldots \subset \mathcal{S}_t = {F^e}^\ast\mathcal{S}$ be a strong Harder-Narasimhan filtration on $Y$. Choose $i$ such that $\mathcal{S}_i/\mathcal{S}_{i-1}$ has degree $\geq 0$ and such that $\mathcal{S}_{i+1}/\mathcal{S}_i$ has degree $<0$. Let $0 \to \mathcal{S}_i \to {F^e}^\ast \mathcal{S} \to {F^e}^\ast \mathcal{S}/\mathcal{S}_i = \mathcal{Q} \to 0$. Then the following are equivalent:
\begin{enumerate}[(i)]
\item{The class $c$ is almost zero.}
\item{Some Frobenius power of the image of ${F^e}^\ast(c)$ in $H^1(Y, \mathcal{Q})$ is zero.}
\end{enumerate}
\end{Theo}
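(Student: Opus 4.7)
The plan is to prove both implications using persistence of almost zero (Lemma \ref{Lazkes}), the characterisation of almost zero for strongly semistable bundles (Corollary \ref{almostzerostronglysemistablecase}), and the universal almost zero result for bundles with $\bar{\mu}_{\min} \geq 0$ (Proposition \ref{Pmuh}). The basic observation is that $\mathcal{S}_i$ carries a strong Harder-Narasimhan filtration with all quotients of non-negative degree while $\mathcal{Q}$ carries one with all quotients of negative degree, and both properties are preserved under further Frobenius pullback since slopes rescale by $p^f$ (in characteristic zero everything collapses since the Frobenius is the identity).

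For the implication (ii)$\Rightarrow$(i), suppose ${F^f}^{\ast}\bar c = 0$, where $\bar c$ denotes the image of ${F^e}^{\ast} c$ in $H^1(Y,\mathcal{Q})$. The long exact sequence of $0 \to {F^f}^{\ast}\mathcal{S}_i \to {F^{e+f}}^{\ast}\mathcal{S} \to {F^f}^{\ast}\mathcal{Q} \to 0$ yields a lift $z \in H^1(Y,{F^f}^{\ast}\mathcal{S}_i)$ of ${F^{e+f}}^{\ast} c$. Since $\bar{\mu}_{\min}({F^f}^{\ast}\mathcal{S}_i) \geq 0$, Proposition \ref{Pmuh} says that ${F^f}^{\ast}\mathcal{S}_i$ is universally almost zero, so $z$ is almost zero. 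By persistence, ${F^{e+f}}^{\ast} c$ is almost zero. To descend to $c$ itself one uses naturality of the absolute Frobenius: given an annihilator on $\pi:Z \to Y$ of $\pi^{\ast}{F^{e+f}}^{\ast} c$, the composition $\psi := F^{e+f} \circ \pi$ has degree $p^{e+f}\deg\pi$ and satisfies $\psi^{\ast} c = \pi^{\ast}{F^{e+f}}^{\ast} c$, so the same section annihilates $\psi^{\ast} c$ while the ratio $\deg \mathcal{L}/\deg \psi$ shrinks by the factor $p^{e+f}$.

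For the converse (i)$\Rightarrow$(ii), persistence gives that $\bar c$ is almost zero, and we induct on the length of the strong Harder-Narasimhan filtration of $\mathcal{Q}$. The base case ($\mathcal{Q}$ strongly semistable of negative degree) is Corollary \ref{almostzerostronglysemistablecase}. In the inductive step we peel off the bottom, $0 \to \mathcal{Q}_1 \to \mathcal{Q} \to \mathcal{Q}' \to 0$, where $\mathcal{Q}_1 = \mathcal{S}_{i+1}/\mathcal{S}_i$ is strongly semistable of negative degree and $\mathcal{Q}' = {F^e}^{\ast}\mathcal{S}/\mathcal{S}_{i+1}$ carries a shorter strong HN filtration with all quotients of negative degree. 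The image of $\bar c$ in $H^1(Y,\mathcal{Q}')$ is almost zero; by the inductive hypothesis some ${F^f}^{\ast}$ of it vanishes, so ${F^f}^{\ast}\bar c$ is the image of a \emph{unique} class $y \in H^1(Y,{F^f}^{\ast}\mathcal{Q}_1)$, uniqueness following from $H^0(Y,{F^f}^{\ast}\mathcal{Q}') = 0$ (each strongly semistable piece of its filtration has negative slope and thus no nonzero global sections).

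The key and most delicate step is to upgrade almost-vanishing of ${F^f}^{\ast}\bar c$ to almost-vanishing of $y$. Given $\epsilon' > 0$, choose $\epsilon < \min(\epsilon', p^f|\bar{\mu}_{\max}(\mathcal{Q}')|)$ and use almost-vanishing of ${F^f}^{\ast}\bar c$ to produce $\varphi : Y' \to Y$, a line bundle $\mathcal{L}$ with $\deg \mathcal{L}/\deg \varphi < \epsilon$, and a nonzero $s \in H^0(Y',\mathcal{L})$ killing $\varphi^{\ast}{F^f}^{\ast}\bar c$. Then $s \cdot \varphi^{\ast} y$ lies in the image of the connecting map $H^0(Y',\varphi^{\ast}{F^f}^{\ast}\mathcal{Q}' \otimes \mathcal{L}) \to H^1(Y',\varphi^{\ast}{F^f}^{\ast}\mathcal{Q}_1 \otimes \mathcal{L})$, but the estimate $\mu_{\max}(\varphi^{\ast}{F^f}^{\ast}\mathcal{Q}' \otimes \mathcal{L}) \leq \deg\varphi \cdot p^f \bar{\mu}_{\max}(\mathcal{Q}') + \deg \mathcal{L} < 0$ forces that $H^0$ to vanish, so $s \cdot \varphi^{\ast} y = 0$ and $y$ is almost zero. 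Corollary \ref{almostzerostronglysemistablecase} applied to the strongly semistable ${F^f}^{\ast}\mathcal{Q}_1$ of negative degree then produces $g$ with ${F^g}^{\ast} y = 0$, whence ${F^{f+g}}^{\ast}\bar c = 0$. The main obstacle is precisely this lifting of almost-vanishing through the short exact sequence, which is what forces the negative slope hypothesis on $\mathcal{Q}'$ to enter in a nontrivial way.
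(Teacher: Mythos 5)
Your argument is correct. The direction (ii)$\Rightarrow$(i) is essentially the paper's argument: lift $F^{(e+f)\ast}c$ to $H^1(Y,F^{f\ast}\mathcal{S}_i)$ via the long exact sequence, apply Proposition~\ref{Pmuh} (which is applicable since $F^{f\ast}\mathcal{S}_i$ inherits a strong HN filtration with all quotients of non-negative slope, hence $\bar\mu_{\min}\geq 0$), and then descend via persistence (Lemma~\ref{Lazkes}) and the observation that composing with $F^{e+f}$ only improves the ratio $\deg\mathcal{L}/\deg\psi$. The paper skips the details of your final ``descent'' step; your spelling it out is harmless and accurate.

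Your direction (i)$\Rightarrow$(ii) takes a genuinely different route. The paper argues contrapositively: assuming the image of $F^{e\ast}c$ in $H^1(Y,\mathcal{Q})$ survives all Frobenius pullbacks, it shows (``as in the proof of Corollary~\ref{almostzerostronglysemistablecase}'') that $\mathcal{Q}'^\vee$ is ample --- Gieseker's proposition gives $\bar\mu_{\min}(\mathcal{Q}'^\vee)>0$ since $\mathcal{Q}^\vee$ is ample and the extension stays nontrivial under Frobenius --- and then invokes Theorem~\ref{Lazas} and persistence to conclude $c$ is not almost zero. You instead give a direct induction on the length of the strong HN filtration of $\mathcal{Q}$: peel off the top strongly semistable quotient $\mathcal{Q}_1$, apply the inductive hypothesis to $\mathcal{Q}'$, lift the surviving class to $y\in H^1(Y,F^{f\ast}\mathcal{Q}_1)$ (using $H^0(Y,F^{f\ast}\mathcal{Q}')=0$), and upgrade almost-vanishing of $F^{f\ast}\bar c$ to almost-vanishing of $y$ via the slope estimate $\mu_{\max}(\varphi^\ast F^{f\ast}\mathcal{Q}'\otimes\mathcal{L})<0$ for annihilator ratios below $p^f|\bar\mu_{\max}(\mathcal{Q}')|$, which kills the $H^0$ term in the connecting sequence. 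Then Corollary~\ref{almostzerostronglysemistablecase} applied to $F^{f\ast}\mathcal{Q}_1$ finishes. Your approach is longer but more self-contained and explicit about which piece of the filtration is responsible for the obstruction; the paper's is shorter but leans a second time on the ampleness machinery and Gieseker's result. Both are valid, and your use of the unnamed lemma that almost zero is preserved under arbitrary finite pullbacks (to pass from $\bar c$ to $F^{f\ast}\bar c$) is justified by the paper.
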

\begin{proof}
Assume that the image of ${F^e}^\ast(c)$ in $H^1(Y, \mathcal{Q})$ is nonzero for all $e$.
Fix an $e$ and let $c'$ be the image of ${F^e}^\ast(c)$ in  $H^1(Y, \mathcal{Q})$. Then it follows as in the proof of Corollary \ref{almostzerostronglysemistablecase} that $\mathcal{Q}'^\vee$ is an ample vector bundle. Hence, Theorem \ref{Lazas} yields that $c'$ is not almost zero. Consequently, by Lemma \ref{Lazkes}, $c$ is not almost zero.

Suppose now that (ii) holds. We may assume that ${F^e}^\ast(c)$ is $0$ in $H^1(Y, \mathcal{Q})$. Thus ${F^e}^\ast(c)$ stems from a cohomology class $c_i$ in $H^1(Y, \mathcal{S}_i)$. Now the result follows from Propositions \ref{Pmuh} and \ref{Lazkes}. Indeed, $\mathcal{S}_1 \subset \ldots \subset \mathcal{S}_i$ is a strong Harder-Narasimhan filtration of $\mathcal{S}_i$. Therefore, $\bar{\mu}_{\min}(\mathcal{S}_i) \geq 0$. 
\end{proof}

\begin{Ko}
\label{KoVectorbundleAlmostZero}
Let $Y$ be a smooth projective curve of genus $g \geq 1$ over an algebraically closed field of characteristic zero and $\mathcal{S}$ a locally free sheaf on $Y$. Let $\mathcal{S}_1 \subset \ldots \subset \mathcal{S}_t$ be the Harder-Narasimhan filtration of $\mathcal{S}$ on $Y$. Choose $i$ such that $\mathcal{S}_i/\mathcal{S}_{i-1}$ has degree $\geq 0$ and that $\mathcal{S}_{i+1}/\mathcal{S}_i$ has degree $<0$. Let $0 \to \mathcal{S}_i \to \mathcal{S} \to \mathcal{S}/\mathcal{S}_i = \mathcal{Q} \to 0$.
Then the following are equivalent:
\begin{enumerate}[(i)]
\item{$\mu_{\min}(\mathcal{S}) \geq 0$.}
\item{$\mathcal{S}$ is almost zero.}
\item{$\mathcal{S}$ is universally almost zero.}
\item{$\mathcal{Q} = 0$.}
 \item{$H^1(Y, \mathcal{Q}) = 0$.} 
\end{enumerate}
\end{Ko}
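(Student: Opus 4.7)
The plan is to verify the cycle of implications (i) $\Leftrightarrow$ (iv) $\Rightarrow$ (iii) $\Rightarrow$ (ii) $\Rightarrow$ (v) $\Rightarrow$ (iv), exploiting the fact that in characteristic zero the strong Harder-Narasimhan filtration coincides with the ordinary one and the Frobenius may be replaced by the identity.

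First, (i) $\Leftrightarrow$ (iv) is immediate from the construction: $\mu_{\min}(\mathcal{S}) = \mu(\mathcal{S}_t/\mathcal{S}_{t-1})$, so $\mu_{\min}(\mathcal{S}) \geq 0$ forces every HN-quotient to have non-negative slope, i.e.\ $i = t$ and $\mathcal{Q} = 0$, and conversely. For (iv) $\Rightarrow$ (iii), assuming $\mathcal{Q} = 0$ gives $\mathcal{S} = \mathcal{S}_i$, so the HN filtration of $\mathcal{S}$ has only quotients of non-negative slope and hence $\bar{\mu}_{\min}(\mathcal{S}) \geq 0$. Then Proposition \ref{Pmuh} yields that $\mathcal{S}$ is universally almost zero. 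The implication (iii) $\Rightarrow$ (ii) is trivial.

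For (ii) $\Rightarrow$ (v), the characteristic-zero version of Theorem \ref{HNFaztight} says that a class $c \in H^1(Y, \mathcal{S})$ is almost zero if and only if its image in $H^1(Y, \mathcal{Q})$ vanishes. So $\mathcal{S}$ being almost zero forces the map $H^1(Y, \mathcal{S}) \to H^1(Y, \mathcal{Q})$ to be the zero map. On the other hand, the long exact sequence associated to $0 \to \mathcal{S}_i \to \mathcal{S} \to \mathcal{Q} \to 0$ ends with $H^1(Y, \mathcal{S}) \to H^1(Y, \mathcal{Q}) \to H^2(Y, \mathcal{S}_i) = 0$, so this map is surjective. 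Together these give $H^1(Y, \mathcal{Q}) = 0$.

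The one step with actual content is (v) $\Rightarrow$ (iv), which is where the hypothesis $g \geq 1$ enters. Assume $\mathcal{Q} \neq 0$; I will derive $H^1(Y, \mathcal{Q}) \neq 0$ by a Riemann-Roch count applied to the HN filtration of $\mathcal{Q}$. All the HN-quotients of $\mathcal{Q}$ are semistable of strictly negative slope; since $\mathcal{O}_Y$ has slope $0$, semistability forces $H^0 = 0$ on each quotient, and using the long exact sequences inductively we obtain $H^0(Y, \mathcal{Q}) = 0$. Hence $h^1(\mathcal{Q}) = -\chi(\mathcal{Q}) = -\deg \mathcal{Q} + \rk(\mathcal{Q})(g-1)$. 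Since each HN-quotient of $\mathcal{Q}$ has negative degree, $\deg \mathcal{Q} < 0$; combined with $g \geq 1$ this yields $h^1(\mathcal{Q}) > 0$, contradicting (v). This is the only place where the genus assumption is used, and it is the main (mild) obstacle: without $g \geq 1$ one could have $\mathcal{Q} = \mathcal{O}_{\mathbb{P}^1}(-1)$, whose $H^1$ vanishes by Serre duality, which explains the exception noted in the preceding remark.
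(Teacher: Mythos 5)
Your proof is correct and takes essentially the same route as the paper: it identifies (i) $\Leftrightarrow$ (iv) as definitional, uses Proposition \ref{Pmuh} for (iv) $\Rightarrow$ (iii), deduces (ii) $\Leftrightarrow$ (v) from Theorem \ref{HNFaztight} together with surjectivity of $H^1(Y,\mathcal{S}) \to H^1(Y,\mathcal{Q})$, and closes the loop with the same Riemann--Roch count ($H^0(Y,\mathcal{Q})=0$ from $\mu_{\max}(\mathcal{Q})<0$, then $\chi(\mathcal{Q})=0$ and $\deg\mathcal{Q}<0$ contradicting $g\geq 1$). The only cosmetic difference is that you arrange the implications into a single cycle rather than proving several biconditionals separately, and spell out the Riemann--Roch step a little more explicitly.
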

\begin{proof}
The equivalence of (v) and (ii) is immediate from Theorem \ref{HNFaztight} since the map $H^1(Y, \mathcal{S}) \to H^1(Y, \mathcal{Q})$ is surjective. Likewise, the equivalence of (i) and (iv) is immediate from the definition of $\mathcal{Q}$. Assume (v) and assume that $\mathcal{Q} \neq 0$. Note that we have $H^0(Y, \mathcal{Q}) = 0$ since $\mu_{\max}(\mathcal{Q}) < 0$. We must have that $\deg \mathcal{Q} = \chi(\mathcal{Q}) - \rk \mathcal{Q} \, \chi(\mathcal{O}_Y) = \rk \mathcal{Q} (g -1) \geq 0$ -- a contradiction. Thus $\mathcal{Q} = 0$.
The implication from (i) to (iii) follows from Proposition \ref{Pmuh}. Finally, assume (iii). Then taking for $\varphi$ the identity shows that (ii) holds.
\end{proof}

\begin{Bem}
\label{UniversallyAlmostZeroPosChar}
\begin{enumerate}[(a)]
\item{We need to exclude $\mathbb{P}^1$ in Corollary \ref{KoVectorbundleAlmostZero} since $\mathcal{O}_{\mathbb{P}^1}(-1)$ is almost zero of negative degree. The equivalence of (i) and (ii) still holds. But (iii), (iv) and (v) are no longer satisfied. Indeed, this is clear for (iii), (iv) and pulling back along a morphism $\varphi_d$ ($d> 1$) as in Remark \ref{BemLmaehUmkehrung} contradicts (v).}
\item{The statement of the Corollary remains valid in arbitrary characteristic if $Y$ is elliptic. This follows from a theorem of Oda (see e.\,g.\ \cite[Theorem 2.2]{brennertightelliptic} for a more general version) which asserts that the map $F^\ast: H^1(Y, \mathcal{S}) \to H^1(Y', F^\ast \mathcal{S})$ is injective, where $F$ is the (relative) Frobenius and $\mathcal{S}$ a locally free sheaf whose indecomposable components are of negative degree. Note that this last condition on $\mathcal{S}$ is equivalent to $\bar{\mu}_{\max}(\mathcal{S}) = \mu_{\max}(\mathcal{S}) < 0$ by \cite[Theorem 1.3]{hartshorneamplecurve}. Applying this to $\mathcal{Q}$ in Corollary \ref{KoVectorbundleAlmostZero} yields the equivalence of (ii) and (v).

We do not know whether the corollary holds true in general in positive characteristic. There are of course cases, where the Frobenius is not injective in cohomology for vector bundles of degree $< 0$ (see e.\,g.\ \cite[Example 3.2]{hartshorneamplecurve}). But for the corollary to be false one would need a map $H^1(Y, \mathcal{Q}) \to H^1(Y', {F^e}^\ast \mathcal{Q})$ such that the Frobenius is identically zero and $\bar{\mu}_{\max}(\mathcal{Q}) < 0$.}
\end{enumerate}
\end{Bem}

\begin{Theo}
\label{mainprimary} 
Let $k$ be an algebraically closed field. Let $R$ be a standard graded two-dimensional normal domain of finite type over $k$. Then for every homogeneous $R_+$-primary ideal $I$ we have
\[I^\star = I^{\dagger \text{GR}}. \]
\end{Theo}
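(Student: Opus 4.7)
The plan is to reduce to the strongly semistable case of Theorem \ref{Daggersolidsst} via a strong Harder--Narasimhan filtration, combining Theorem \ref{HNFaztight} on the dagger side with the parallel Harder--Narasimhan reduction for solid closure carried out in \cite{brennertightplus}.

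First, by Lemma \ref{daggerhomogeneous0} and Corollary \ref{solidhomogeneous}, both $I^{\dagger \text{GR}}$ and $I^\star$ are homogeneous, so it suffices to show that a homogeneous element $f_0$ of degree $d_0$ belongs to one closure if and only if it belongs to the other. Since $R$ is a normal two-dimensional standard graded domain of finite type over the algebraically closed field $k$, the scheme $Y = \Proj R$ is a smooth projective curve, so we are in Situation \ref{Sit1}. Form the twisted syzygy bundle $\mathcal{S} = \Syz(f_1, \ldots, f_n)(d_0)$ on $Y$ and the cohomology class $c = \delta(f_0) \in H^1(Y, \mathcal{S})$. By Theorem \ref{almostzerodagger}, $f_0 \in I^{\dagger \text{GR}}$ is equivalent to $c$ being almost zero, while the geometric interpretation of solid closure recalled in the introduction identifies $f_0 \in I^\star$ with the non-affineness of the associated torsor $T$.

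Next, apply Langer's theorem to choose a Frobenius power $F^e$ (taking $e = 0$ in characteristic zero) such that the Harder--Narasimhan filtration $0 \subset \mathcal{S}_1 \subset \ldots \subset \mathcal{S}_t = F^{e\ast}\mathcal{S}$ has strongly semistable quotients. Let $i$ be the largest index with $\mu(\mathcal{S}_i/\mathcal{S}_{i-1}) \geq 0$, set $\mathcal{Q} = F^{e\ast}\mathcal{S}/\mathcal{S}_i$, and let $c_{\mathcal{Q}} \in H^1(Y, \mathcal{Q})$ be the image of $F^{e\ast}(c)$. Theorem \ref{HNFaztight} then translates ``$c$ is almost zero'' into the vanishing of some Frobenius pullback of $c_{\mathcal{Q}}$. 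The parallel reduction for solid closure carried out in \cite{brennertightplus} (building on \cite[Proposition 2.1]{brennertightplus}) translates ``$T$ is not affine'' into exactly the same vanishing condition on $c_{\mathcal{Q}}$: both correspond to the failure of ampleness of the dualised extension $\mathcal{Q}'^\vee$, which on each strongly semistable piece of $\mathcal{Q}$ (of strictly negative slope) is controlled by Corollary \ref{almostzerostronglysemistablecase}. Combining the two reductions yields $f_0 \in I^{\dagger \text{GR}}$ if and only if $f_0 \in I^\star$.

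The main obstacle is the precise alignment of the two Harder--Narasimhan reductions: one has to verify that the negative-slope quotient $\mathcal{Q}$ used to test non-affineness of $T$ via \cite{brennertightplus} coincides with the one appearing in Theorem \ref{HNFaztight}, and that the extracted Frobenius-pullback condition on $c_{\mathcal{Q}}$ is literally identical on both sides. Once this matching is in place, Theorem \ref{Daggersolidsst} is recovered as the trivial case $t = 1$ (where $\mathcal{Q}$ is either $F^{e\ast}\mathcal{S}$ or zero), and the proof concludes immediately.
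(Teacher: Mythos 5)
Your proposal is correct and follows essentially the same route as the paper: reduce to homogeneous elements via Lemma \ref{daggerhomogeneous0} and Corollary \ref{solidhomogeneous}, translate $f_0 \in I^{\dagger\text{GR}}$ into $c$ being almost zero via Theorem \ref{almostzerodagger}, translate $f_0 \in I^\star$ into non-affineness of the torsor $T$ via \cite[Proposition 3.9]{brennertightproj}, and match the two by combining the Harder--Narasimhan reduction on the dagger side (Theorem \ref{HNFaztight}) with the one for solid closure from \cite[Theorem 2.3]{brennertightplus}. The ``alignment obstacle'' you flag is automatic here, since the quotient $\mathcal{Q}$ used in Theorem \ref{HNFaztight} was deliberately modeled on the construction from \cite{brennertightplus}, so both reductions terminate in the identical vanishing condition on the image of $F^{e\ast}(c)$ in $H^1(Y,\mathcal{Q})$.
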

\begin{proof}
In light of Lemma \ref{daggerhomogeneous0} and Corollary \ref{solidhomogeneous} we may restrict our attention to homogeneous elements. So let $I = (f_1, \ldots, f_n)$ and let $f_0$ be homogeneous with corresponding cohomology class $\delta(f_0) = c$ and torsor $T$.
Combining \cite[Theorem 2.3]{brennertightplus} and Theorem \ref{HNFaztight} we have that the torsor $T$ is not affine if and only if the corresponding cohomology class $c$ is almost zero. The non-affineness of $T$ is equivalent to containment in solid closure by \cite[Proposition 3.9]{brennertightproj}. And $c$ is almost zero if and only if the element $f_0$ is contained in graded dagger closure by Theorem \ref{almostzerodagger}.
\end{proof}

\section{Algebraic reductions}
\label{AlgebraicReductions}

In this section we prove Theorem \ref{mainprimary} without the conditions standard graded, normal, $R_+$-primary and $k$ algebraically closed. In doing this, we will frequently need to pass from $R$ to a finite graded extension domain $S$. It is then clear from the definition of graded dagger closure that if an element $f$ of $R$ is in $(IS)^{\dagger \text{GR}}$ it is also contained in $I^{\dagger \text{GR}}$. For solid closure we recall the following

\begin{Prop}
\label{hochster2}
Assume that $R \to S$ is a finite extension of noetherian domains, $I \subseteq R$ is an ideal and let $f \in R$. Then $f \in (IS)^\star$ implies $f \in I^\star$.
\end{Prop}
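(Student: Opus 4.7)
The plan is to transport the hypothesis $f\in(IS)^\star$ to the $R$-side by producing, for each complete local domain $R'=\widehat{R_\mathfrak{m}}/\mathfrak{q}$ arising from $R$, a module-finite extension $R'\hookrightarrow S'$ with $S'$ an admissible complete local domain from $S$, and then descending via tensor--Hom adjunction from $S'$-forcing data to $R'$-forcing data. Throughout, $A':=R'[T_1,\ldots,T_n]/(\sum f_iT_i+f)$ denotes the $R'$-forcing algebra and $d=\dim R'$; the goal is $H^d_\mathfrak{m}(A')\neq 0$.

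First I would construct $S'$. Module-finiteness of $R\hookrightarrow S$ and exactness of completion on finitely generated modules give a module-finite injection $\widehat{R_\mathfrak{m}}\hookrightarrow T:=\widehat{R_\mathfrak{m}}\otimes_R S\cong \widehat{S_\mathfrak{m}}=\prod_{\mathfrak{n}\mid\mathfrak{m}}\widehat{S_\mathfrak{n}}$. By lying-over for integral extensions one finds a prime of $T$ over $\mathfrak{q}$, and any minimal prime $\mathfrak{Q}\subset T$ beneath it must contract to $\mathfrak{q}$ because $\mathfrak{q}$ is already minimal in $\widehat{R_\mathfrak{m}}$. Since minimal primes of a finite product of rings live in a single factor, $S':=T/\mathfrak{Q}=\widehat{S_{\mathfrak{n}_0}}/\mathfrak{q}'$ for some maximal $\mathfrak{n}_0\subset S$ and minimal $\mathfrak{q}'\subset\widehat{S_{\mathfrak{n}_0}}$ -- precisely the shape permitted in the definition of $(IS)^\star$. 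Cohen-Seidenberg then makes $R'\hookrightarrow S'$ a module-finite extension of complete local domains of common dimension $d$.

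Applying the hypothesis to $S'$, the $S'$-forcing algebra $B':=S'[T_1,\ldots,T_n]/(\sum f_iT_i+f)$ satisfies $H^d_{\mathfrak{n}'}(B')\neq 0$ at the maximal ideal $\mathfrak{n}'$ of $S'$. Since $\mathfrak{m}S'$ is $\mathfrak{n}'$-primary, $H^d_\mathfrak{m}(B')=H^d_{\mathfrak{n}'}(B')\neq 0$. I would then invoke Hochster's characterisation of solidness over a complete local Noetherian domain \cite[Corollary 2.4]{hochstersolid}, which converts this non-vanishing into a nonzero $R'$-linear map $\psi:B'\to R'$. Via the canonical identification $B'\cong A'\otimes_{R'}S'$, choose an elementary tensor with $\psi(a_0\otimes s_0)\neq 0$; the $R'$-linear map $\psi_{s_0}:A'\to R'$ defined by $a\mapsto\psi(a\otimes s_0)$ is then nonzero, so its image is a nonzero ideal of $R'$. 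The easy direction of the solidness correspondence -- a long-exact-sequence chase on $0\to\ker\psi_{s_0}\to A'\to\im\psi_{s_0}\to 0$ and $0\to\im\psi_{s_0}\to R'\to R'/\im\psi_{s_0}\to 0$, using that $\dim R'/\im\psi_{s_0}<d$ together with Grothendieck vanishing above dimension $d$ and the non-vanishing of $H^d_\mathfrak{m}(R')$ -- yields $H^d_\mathfrak{m}(A')\neq 0$.

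I expect the main obstacle to be the bookkeeping in the first step: ensuring that the minimal prime $\mathfrak{Q}\subset T$ genuinely contracts to $\mathfrak{q}$ and that the resulting $S'$ has the admissible form $\widehat{S_{\mathfrak{n}_0}}/\mathfrak{q}'$. Once this is in place, the remainder is a routine bilinearity trick combined with Hochster's solid-module theorem.
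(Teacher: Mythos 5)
Your proof is correct. The paper itself does not argue the statement at all: it simply invokes \cite[Theorem 5.9 (c)]{hochstersolid} (which is this very statement) and points to \cite[Remarks 1.7.6]{hochsterhuneketightzero} for the observation that the stated hypotheses suffice. What you have produced is a self-contained reconstruction of the underlying argument, and all the ingredients check out: the identification $\widehat{R_\mathfrak{m}}\otimes_R S \cong \widehat{S_\mathfrak{m}}\cong\prod_{\mathfrak{n}\mid\mathfrak{m}}\widehat{S_\mathfrak{n}}$ via module-finiteness and exactness of completion; the lying-over step where the minimal prime $\mathfrak{Q}$ below a prime over $\mathfrak{q}$ contracts to $\mathfrak{q}$ exactly because $\mathfrak{q}$ is already minimal (so any prime of $\widehat{R_\mathfrak{m}}$ contained in $\mathfrak{q}$ equals $\mathfrak{q}$); the observation that $\mathfrak{Q}$ lives in a single factor, so $S'=T/\mathfrak{Q}=\widehat{S_{\mathfrak{n}_0}}/\mathfrak{q}'$ is of the admissible form; the identification $H^d_{\mathfrak{m}R'}(B')=H^d_{\mathfrak{n}'}(B')$ (independence of base ring together with $\mathfrak{m}S'$ being $\mathfrak{n}'$-primary); the application of Hochster's solidity criterion over $R'$ to get a nonzero $\psi\colon B'\to R'$; and the restriction of $\psi$ along an elementary tensor to produce a nonzero $R'$-linear functional on $A'$, hence $H^d_{\mathfrak{m}R'}(A')\ne 0$ by the easy direction. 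The paper buys brevity by citing; your version buys transparency and verification, and it is the standard persistence-under-finite-extensions argument for solid closure. One small notational caveat: you write $H^d_\mathfrak{m}(-)$ for several different base rings; this is harmless but worth making explicit when spelling out the independence-of-base step.
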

\begin{proof}
These conditions imply that \cite[Theorem 5.9 (c)]{hochstersolid} is satisfied -- see \cite[Remarks 1.7.6]{hochsterhuneketightzero} for the argument.
\end{proof}

Before presenting the first reduction result we need two somewhat technical lemmata.
We shall also need the notion of a \emph{paraclass} in the next lemma. Let $R$ be a $d$-dimensional $\mathbb{N}$-graded domain finitely generated over a field $R_0$. Let $(x_1, \ldots, x_d)$ be homogeneous parameters for $R$. This yields an element $1/(x_1 \cdots x_d) \in H^d_{R_+}(R)$. Any such element is called a \emph{canonical element} or a \emph{paraclass}. Since $R$ contains a field such a class is nonzero (see \cite[Theorem 9.2.1 and Remark 9.2.4 (b)]{brunsherzog}). Moreover, if $A$ is a forcing algebra and $d =2$ then $H^2_{R_+}(A) = 0$ if and only if some (equivalently every) paraclass coming from $R$ vanishes (see \cite[Proposition 1.9]{brennerparasolid}). This is \emph{not} true if $d \geq 3$ in equal characteristic zero, and indeed, this is the issue which parasolid closure addresses. We refer to \cite[Section 1]{brennerparasolid} for an elaborate discussion of paraclasses and the connection to (para)solid closure and also to \cite[Sections 9.2 and 9.3]{brunsherzog} for further discussion of paraclasses.

Finally, we recall that the vanishing of a paraclass $c = 1/(x_1 \cdots x_d)$ in $H^d_{R_+}(A)$ is equivalent to $(x_1 \cdots x_d)^t \in (x_1^{t+1}, \ldots, x_d^{t+1})$ in $A$ for some $t \in \mathbb{N}$ (see \cite[Remark 9.2.4 (b) and the discussion at the beginning of Section 9.3]{brunsherzog}).

\begin{Le}
\label{paraclassvansihing}
Let $R$ be a two-dimensional domain of finite type over a field $R_0 = k$, $I \subseteq R$ a homogeneous ideal and $f$ a homogeneous element of $R$. Let $A$ be the forcing algebra for $(f, I)$ and assume that $H^2_{R_+}(A) = 0$. Then there exists a homogeneous $R_+$-primary ideal $J$ containing $I$ with forcing algebra $A'$ for $(f, J)$ such that $H^2_{R_+}(A') = 0$.
\end{Le}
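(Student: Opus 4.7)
The plan is to construct $J$ directly by enlarging $I$ with specific powers of a system of homogeneous parameters of $R$. Fix homogeneous parameters $x_1, x_2 \in R_+$ (they exist by graded Noether normalisation). Writing $I = (f_1, \ldots, f_n)$, the hypothesis $H^2_{R_+}(A) = 0$ together with the characterisation of paraclass vanishing recalled just above the lemma yields an integer $t$ and elements $\alpha, \beta \in A$ with $(x_1 x_2)^t = x_1^{t+1}\alpha + x_2^{t+1}\beta$ in $A$. Lifting $\alpha, \beta$ to polynomials $\tilde\alpha, \tilde\beta \in R[T_1, \ldots, T_n]$, one obtains a polynomial identity
\[
(x_1 x_2)^t - x_1^{t+1}\tilde\alpha - x_2^{t+1}\tilde\beta = (f_1 T_1 + \cdots + f_n T_n + f)\, p
\]
for some $p \in R[T_1, \ldots, T_n]$.

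Now set $J = I + (x_1^{t+1}, x_2^{t+1})$. This is a homogeneous ideal containing $I$, and it is $R_+$-primary because its radical contains $\sqrt{(x_1^{t+1}, x_2^{t+1})} = R_+$. With this presentation of $J$, the forcing algebra for $(f, J)$ is
\[
A' = R[T_1, \ldots, T_n, S_1, S_2]/(f_1 T_1 + \cdots + f_n T_n + x_1^{t+1} S_1 + x_2^{t+1} S_2 + f).
\]
In $A'$ the defining relation rewrites as $f_1 T_1 + \cdots + f_n T_n + f = -x_1^{t+1} S_1 - x_2^{t+1} S_2$, so substituting into the polynomial identity (now interpreted in $A'$) gives
\[
(x_1 x_2)^t = x_1^{t+1}(\tilde\alpha - S_1 p) + x_2^{t+1}(\tilde\beta - S_2 p) \in (x_1^{t+1}, x_2^{t+1})A'.
\]
Applying the characterisation in the reverse direction, the paraclass $1/(x_1 x_2)$ vanishes in $H^2_{R_+}(A')$, hence $H^2_{R_+}(A') = 0$.

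The substance of the argument, once the choice of $J$ is made, is that the original forcing relation $f_1 T_1 + \cdots + f_n T_n + f$ --- which vanishes in $A$ but not in $A'$ --- is absorbed into the ideal $(x_1^{t+1}, x_2^{t+1})A'$ precisely because $J$ contains the two powers $x_1^{t+1}, x_2^{t+1}$ appearing in the paraclass relation. The whole construction is therefore dictated by the exponent $t$ supplied by the hypothesis; in particular, no prime avoidance or analysis of the minimal primes of $I$ is needed. The only mild subtlety is that one should check the forcing algebra $A'$ is independent (for the purposes of $H^2_{R_+}$) of the choice of generators of $J$, which is standard.
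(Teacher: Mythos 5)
Your proposal is correct and matches the paper's proof essentially line for line: the same choice $J = I + (x_1^{t+1}, x_2^{t+1})$ dictated by the exponent $t$ in the paraclass relation, the same lift to a polynomial identity in $R[T_1,\ldots,T_n]$, and the same absorption of the forcing relation into $(x_1^{t+1}, x_2^{t+1})A'$ via the new variables. (Your brief concern about independence of the choice of generators is unnecessary here, since the lemma only asks to exhibit one forcing algebra for $(f,J)$ with vanishing $H^2_{R_+}$, which you do.)
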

\begin{proof}
Let $I= (f_1, \ldots, f_n)$ and $f \in R$. Write \[R[T_1, \ldots, T_n]/(\sum_{i=1}^n f_i T_i - f)\] for the forcing algebra $A$ and assume that $H^2_{R_+}(A) = 0$. In particular, paraclasses vanish, that is, we have a relation \[(xy)^t = a_1 x^{t+1} + a_2 y^{t+1} + P (\sum_{i=1}^n f_i T_i -f) \text{ in } R[T_1, \ldots, T_n],\]where $x,y$ are homogeneous parameters for $R_+$ and $a_1, a_2, P \in R[T_1, \ldots, T_n]$.

Consider $J = I + (x^{t+1}, y^{t+1})$ -- this is obviously $R_+$-primary and contains $I$. A forcing algebra for $(f, J)$ is given by \[A' = R[T_1, \ldots, T_n, U_1, U_2]/(\sum_{i=1}^n f_i T_i + x^{t+1} U_1 + y^{t+1} U_2 -f).\]In $R[T_1, \ldots, T_n, U_1, U_2]$ we obtain the equation \[(xy)^t = (a_1 - P U_1) x^{t+1} + (a_2 - P U_1) y^{t+1} + P (\sum_{i=1}^n f_i T_i + x^{t+1} U_1 + y^{t+1} U_2  -f).\]This means that the paraclass $1/(xy)$ vanishes in $H^2_{R_+}(A')$ and since $R$ has dimension two this implies that $H^2_{R_+}(A') = 0$.
\end{proof}

\begin{Le} %Note that if dim > 2 then one cannot obtain regularity so easily.
\label{irrelevantideal}
Let $R$ be a $\mathbb{N}$-graded domain of dimension two that is finitely generated over a field $R_0$. Let $I \subseteq R$ be a homogeneous ideal and $f \in R$. Then $f \in I^\star$ if and only if $f \in (IR_{R_+})^\star$.
\end{Le}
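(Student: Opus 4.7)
The plan is to prove the two implications separately.

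For the forward direction, note that $R_+$ is a maximal ideal of $R$ extending to the unique maximal ideal $R_+R_{R_+}$ of $R_{R_+}$, with matching completions $\widehat{(R_{R_+})}=\widehat{R_{R_+}}$. Consequently, the family of complete local domain quotients $\widehat{R_{R_+}}/\mathfrak{q}$ used to test $(IR_{R_+})^\star$ coincides with the family occurring at the maximal ideal $R_+$ in the definition of $I^\star$. So $f\in I^\star$ directly supplies the non-vanishing of $H^2_{R_+}$ required by $f\in(IR_{R_+})^\star$.

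For the reverse direction, assume $f\in(IR_{R_+})^\star$. By Corollary~\ref{solidhomogeneous} one may take $f$ homogeneous, and it suffices to verify the defining non-vanishing condition at every maximal ideal $\mathfrak{m}$ of $R$. The hypothesis handles $\mathfrak{m}=R_+$. Any other maximal ideal is necessarily non-homogeneous, since $R_+$ is the only homogeneous maximal ideal of $R$. If $I\not\subseteq\mathfrak{m}$, then some $f_i$ is a unit in $\widehat{R_\mathfrak{m}}/\mathfrak{q}$; solving the forcing relation for the corresponding $T_i$ presents the forcing algebra as a polynomial ring in $n-1$ variables over a $2$-dimensional complete local domain, whose top local cohomology $H^2_\mathfrak{m}$ is then automatically nonzero.

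The substantive case is $\mathfrak{m}\neq R_+$ with $I\subseteq\mathfrak{m}$. Here the plan is to invoke the paraclass characterization of solid closure in dimension two (cf.\ \cite[Proposition~1.9]{brennerparasolid}): combined with flat base change for local cohomology this reformulates the hypothesis $f\in(IR_{R_+})^\star$ as $H^2_{R_+}(A)\neq 0$, where $A$ is the forcing algebra of $(f,I)$ over $R$. Because paraclass non-vanishing is preserved under the natural surjection $A'\twoheadrightarrow A$ obtained by setting to zero the extra $T_j$ corresponding to additional generators, this forces $H^2_{R_+}(A')\neq 0$ for every $R_+$-primary homogeneous ideal $J\supseteq I$; Theorem~\ref{mainprimary} then yields $f\in J^\star$. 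Since $V(J)=\{R_+\}$ forces $J\not\subseteq\mathfrak{m}$, the solid closure condition at $\mathfrak{m}$ for $(f,J)$ holds automatically by the previous case, and a cohomological comparison via the surjection $A'\twoheadrightarrow A$ will transfer the non-vanishing back to the forcing algebra of $(f,I)$ at $\mathfrak{m}$.

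\textbf{Main obstacle.} The hardest step will be the final transfer of local cohomology non-vanishing from the forcing algebra $A'$ of an $R_+$-primary enlargement $J$ back to the forcing algebra $A$ of the original ideal $I$ at a non-homogeneous maximal ideal $\mathfrak{m}$ containing $I$. The restriction to dimension two is essential throughout, as it is there that the paraclass criterion yields a sharp characterization of solid closure and Lemma~\ref{paraclassvansihing} effectively bridges the primary and non-primary settings.
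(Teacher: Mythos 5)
The forward direction is correct and matches the paper's use of persistence. The reverse direction, however, has a genuine gap at the step you yourself flag as the ``main obstacle'': the final ``cohomological comparison'' cannot go in the direction you need. For $J \supseteq I$, the surjection $A_J \twoheadrightarrow A_I$ of forcing algebras obtained by killing the extra variables induces, by functoriality of local cohomology, a map $H^2_{\mathfrak m}(A_J) \to H^2_{\mathfrak m}(A_I)$ under which the paraclass of $A_J$ maps to the paraclass of $A_I$. Thus non-vanishing travels from $A_I$ to $A_J$ (this is just the monotonicity $I^\star \subseteq J^\star$), never the other way. Indeed the implication you want, $f \in J^\star \Rightarrow f \in I^\star$ locally at $\mathfrak m$, is simply false in general: taking $J = R_+$ makes $f \in J^\star$ vacuous for any homogeneous nonunit $f$. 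So the passage through an $R_+$-primary enlargement $J$, while it cleanly recovers $f \in J^\star$, carries no information about $I^\star$ at maximal ideals $\mathfrak m \neq R_+$ containing $I$. Lemma~\ref{paraclassvansihing} is genuinely a one-way statement in this regard (from vanishing for $I$ to vanishing for some $J$), and there is no converse to exploit.

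The paper closes the case $\mathfrak m \neq R_+$ by a direct commutative-algebra argument, not a cohomological one. It first disposes of $\height I \in \{0, 2\}$ trivially, and for $\height I = 1$ passes (via Proposition~\ref{hochster2}) to a finite graded extension that is normal with an isolated singularity at $R_+$. Then for every minimal prime $P$ over $I$ one has $f \in I R_P$, because $R_P$ is a regular local ring of dimension $\leq 2$ and such rings are solidly closed; writing $u_i f \in I$ with homogeneous $u_i \notin P_i$ and invoking a covering of $V(I) \setminus \{R_+\}$ by the $D(u_i s_j)$, one concludes $f \in I R_{\mathfrak m}$ for every maximal $\mathfrak m \neq R_+$, which combined with the hypothesis at $R_+$ gives $f \in I^\star$. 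Your plan correctly identifies the irrelevant maximal ideals and the trivial case $I \not\subseteq \mathfrak m$, but the substantive case $I \subseteq \mathfrak m \neq R_+$ needs this kind of geometric local-membership argument rather than a transfer through $R_+$-primary enlargements. (As a minor point, your invocation of Theorem~\ref{mainprimary} to deduce $f \in J^\star$ is also superfluous: for $R_+$-primary $J$ the conditions at maximal ideals other than $R_+$ are automatic, so $H^2_{R_+}(A_J) \neq 0$ already gives it.)
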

\begin{proof}
The only if part is clear by the persistence of solid closure (cf.\ \cite[Theorem 5.6]{hochstersolid}). If $\height I$ is $0$ or $2$ then the assertion is also clear. For $\height I = 0$ implies $I = 0$ and $0^\star = \Rad 0 = 0$. If $\height I = 2$ then $R_+$ is the only maximal ideal containing $I$ (this follows as the minimal primes over $I$ are homogeneous -- see \cite[Lemma 1.5.6 (a)]{brunsherzog}). Since we only have to consider the completions at maximal ideals containing $I$ the claim follows.
 
So we may assume that $\height I = 1$.
By Proposition \ref{hochster2} we may pass to finite graded ring extensions. Adjoining roots of generators of $R$ we may assume that $R$ is standard graded (hence $\mathcal{O}_{\Proj R}(1)$ is locally free and generated by global sections) so that we may work with extensions that are section rings. Furthermore, by passing to the section ring corresponding to $\varphi^\ast \mathcal{O}_{\Proj R}(1)$, where $\varphi$ denotes the normalisation morphism, we may assume by \cite[Proposition 2.1 (9)]{hyrysmith} that $R$ has an isolated normal singularity in $R_+$. Since $\varphi^\ast \mathcal{O}_{\Proj R}(1)$ is still generated by global sections we have by Proposition \ref{PCover} that $\Proj R$ is covered by standard open sets coming from elements of degree $1$.

Applying Corollary \ref{solidhomogeneous} we may assume $f$ to be homogeneous.
We have that $f \in (IR_{P})^\star = I R_P$ for every minimal prime $P$ over $I$ by persistence and since ideals in regular rings of dimension $\leq 2$ are solidly closed. Fix a minimal prime $P$ over $I$. We then have $uf \in I$ for some $u \in R$ that is not contained in $P$. And we may assume $u$ to be homogeneous since $I, P$ and $f$ are homogeneous.% Indeed, if $\sum_i u_i f \in I$ we have $u_1 f \in I$, where the $u_i$ are the homogeneous components of $u$ ordered by the degree. Inductively the claim follows. 
%Wenn wir jetzt Wurzeln von u aufnehmen, damit wir das als Gleichung im Grad 0 schreiben koennen, dann kann sich P vergroeszern?

Let $P_1, \ldots, P_n$ be the minimal primes over $I$. We then have homogeneous elements $u_i \in R \setminus P_i$ such that $u_i f \in I$. Moreover, we have elements $s_1, \ldots, s_m$ of degree $1$ that cover $\Proj R$. This implies ${s_j}^{- \deg f} f \in (IR_{P_i})_0$ for all $i$ and suitable $j$. 
Furthermore, the $D_+(u_i s_j) \cap V_+(I)$ cover $V_+(I) = \{P_1, \ldots, P_n\}$. Looking at the cone mapping we see that the $D(u_i s_j) \cap V(I)$ cover $V(I) \setminus R_+$. Hence, $f \in IR_M = (IR_M)^\star$ for any maximal ideal $M \neq R_+$. Since, by assumption, $f \in (IR_{R_+})^\star$ it follows that $f \in (I R_M)^\star$ for every maximal ideal $M$ of $R$. Hence, $f \in I^\star$.
\end{proof}

\begin{Theo}
\label{TRed}
Let $k$ be an algebraically closed field. Let $R$ be an $\mathbb{N}$-graded two-dimensional domain of finite type over $R_0 = k$. Then for every homogeneous ideal $I$ we have
\[I^\star = I^{\dagger \text{GR}}. \]
\end{Theo}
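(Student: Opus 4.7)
The plan is to reduce Theorem \ref{TRed} to the primary case handled by Theorem \ref{mainprimary} via two successive reductions: first pass to a finite graded extension that is standard graded and normal, then compare $I^\star$ and $I^{\dagger \text{GR}}$ with their analogues for $R_+$-primary ideals containing $I$. By Lemma \ref{daggerhomogeneous0} and Corollary \ref{solidhomogeneous} both closures of a homogeneous ideal are homogeneous, so I need only work with homogeneous $f$. Using Lemma \ref{stdgraded} to adjoin suitable roots and then replacing $R$ by the section ring of $\varphi^\ast\mathcal{O}_{\Proj R}(1)$ for the normalisation morphism $\varphi$ (as in the proof of Lemma \ref{irrelevantideal}), I pass to a finite graded extension $R \subseteq S$ with $S$ standard graded and normal. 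Since $R^{+\text{GR}} = S^{+\text{GR}}$, one has $f \in I^{\dagger \text{GR}}$ iff $f \in (IS)^{\dagger \text{GR}}$; on the solid side, Proposition \ref{hochster2} combined with persistence of solid closure gives the analogous $f \in I^\star$ iff $f \in (IS)^\star$. Hence I may assume $R$ itself is standard graded and normal.

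For the inclusion $I^{\dagger \text{GR}} \subseteq I^\star$, let $f \in I^{\dagger \text{GR}}$ be homogeneous and let $J \supseteq I$ be any homogeneous $R_+$-primary ideal. Since $IR^{+\text{GR}} \subseteq JR^{+\text{GR}}$, monotonicity of graded dagger closure yields $f \in J^{\dagger \text{GR}}$, whence by Theorem \ref{mainprimary} $f \in J^\star$. In dimension two this is equivalent to $H^2_{R_+}(A_J) \neq 0$ for the forcing algebra $A_J$ of $(f,J)$. Since this holds for every such $J$, the contrapositive of Lemma \ref{paraclassvansihing} forces $H^2_{R_+}(A) \neq 0$ for the forcing algebra $A$ of $(f,I)$, i.e.\ $f \in (IR_{R_+})^\star$, and Lemma \ref{irrelevantideal} then yields $f \in I^\star$.

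For the inclusion $I^\star \subseteq I^{\dagger \text{GR}}$, let $f \in I^\star$ be homogeneous and fix $\eps > 0$. Choose $N \in \mathbb{N}$ with $N > \deg f + \eps$ and put $J := I + R_+^N$. Then $J$ is a homogeneous $R_+$-primary ideal containing $I$, so persistence gives $f \in J^\star$, and Theorem \ref{mainprimary} yields $f \in J^{\dagger \text{GR}}$. Thus there exists a homogeneous $a \in R^{+\text{GR}}$ with $\deg a = \nu(a) < \eps$ and
\[
af \in JR^{+\text{GR}} = IR^{+\text{GR}} + R_+^N R^{+\text{GR}}.
\]
Since $R$ is standard graded, $R_+^N$ is generated in degree $N$, so $R_+^N R^{+\text{GR}}$ is concentrated in $\mathbb{Q}_{\geq 0}$-degrees $\geq N$, whereas $af$ is homogeneous of degree $\deg f + \deg a < N$. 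Writing $af = b + c$ with $b \in IR^{+\text{GR}}$ and $c \in R_+^N R^{+\text{GR}}$ and extracting the homogeneous component of degree $\deg f + \deg a$ (both ideals being graded) makes the $c$-contribution vanish, so $af$ equals the corresponding component of $b$ and hence lies in $IR^{+\text{GR}}$. Thus $f \in I^{\dagger \text{GR}}$.

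The subtlest point is the dimension-two equivalence between $H^2_{R_+}(A) \neq 0$ and solid closure membership at $R_+$, which legitimises combining Lemmas \ref{irrelevantideal} and \ref{paraclassvansihing} in the first direction. The second direction is structurally much simpler once the reduction to standard graded has been carried out: it is precisely this hypothesis that enables the degree argument to convert ``small multiplier into the primary enlargement $J$'' into ``small multiplier into $I$ itself''.
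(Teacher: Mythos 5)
Your proof is correct and follows essentially the same route as the paper: reduce to a finite normal graded extension covered by $D_+(s)$ with $s$ of degree $1$ (via Lemma \ref{stdgraded} and a section-ring construction, together with $R^{+\text{GR}} = S^{+\text{GR}}$ on the dagger side and Proposition \ref{hochster2}/persistence on the solid side), then deduce $I^\star \subseteq I^{\dagger\text{GR}}$ by enlarging $I$ to the $R_+$-primary ideal $J = I + R_+^N$, applying Theorem \ref{mainprimary}, and using the degree bound $\deg a + \deg f < N$ to force the multiplier into $IR^{+\text{GR}}$, and deduce $I^{\dagger\text{GR}} \subseteq I^\star$ by the contrapositive of Lemma \ref{paraclassvansihing} combined with Lemma \ref{irrelevantideal}, exactly as in the paper (which phrases the latter as a contradiction). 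The only minor imprecision is the claim that the intermediate section ring can be taken \emph{standard graded} and normal simultaneously; what the construction actually yields is a normal ring whose $\Proj$ is covered in degree $1$, which, as the paper notes, is the hypothesis under which Theorem \ref{mainprimary} still applies, and it suffices for your degree argument since $R_+^N$ sits in degrees $\geq N$ for any $\mathbb{N}$-graded domain with $R_0$ a field.
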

\begin{proof}
We first reduce to the primary case using Theorem \ref{mainprimary}. So suppose in addition that $R$ is normal and standard graded. Let $I = (f_1, \ldots, f_n)$ be a homogeneous ideal. Suppose $f \in I^\star$ for some $f \in R$. For every $l \in \mathbb{N}$ we have $f \in (I + R_{\geq l})^\star$ and we may assume $f$ to be homogeneous of degree $m$ due to Corollary \ref{solidhomogeneous}. Since these ideals are $R_+$-primary we have $f \in (I + R_{\geq l})^{\dagger \text{GR}}$ by Theorem \ref{mainprimary}. Therefore, by Lemma \ref{Ldeg} we have  for $r \in \mathbb{N}$ a nonzero element $a_r$ of degree $\frac{1}{r}$ (which may depend on $l$ but the degree does not) in some finite $\mathbb{Q}$-graded extension domain $S$ of $R$ such that $a_r f = \sum_i s_i f_i + \sum t_j g_j$ with $s_i, t_j \in S$ and $g_j \in R_{\geq l}$. We may assume that everything is homogeneous, hence for $l > m+1$ we get $t_j = 0$ and therefore $a_rf \in IS$.

Suppose now that $f \in I^{\dagger \text{GR}}$ and assume that $f \notin I^\star$. By Lemma \ref{irrelevantideal} this happens if and only if $H^2_{R_+}(A) = 0$, where $A$ is the forcing algebra for $(f, I)$ (note that since $R$ is normal and excellent $R' = \widehat{R_{R_+}}$ is integral, and $H^d_{\widehat{R_+}}(R' \otimes_R A) = H^d_{R_+}(A)$ by flat base change \cite[Theorem 4.3.2]{brodmannsharp}, since $H^d_m(A) = H^d_m(R) \otimes A$ and because $H^d_{\widehat{m}}(\widehat{R}) = H^d_m(R)$). %das mittlere folgt aus dem Cechkomplex
By Lemma \ref{paraclassvansihing} it follows that there exists an $R_+$-primary homogeneous ideal $I \subseteq J$ such that $f \notin J^\star$. But this is a contradiction since we must have $f \in J^{\dagger \text{GR}} = J^\star$ by Theorem \ref{mainprimary}.

Assume now that $R$ is a two-dimensional $\mathbb{N}$-graded domain of finite type over an algebraically closed field $k = R_0$. Write $R = k[x_1, \ldots, x_r]/P$ with $\deg x_i = e_i$ and adjoin $e_i$th roots of the $x_i$ (cf.\ Lemma \ref{stdgraded}). Call the normalisation of this ring $S$. We therefore have a finite injective mapping $R \to S$ such that the $D_+(s), s \in S_1$, cover $\Proj S$. Note that Theorem \ref{mainprimary} still holds under this weaker hypothesis.

By \cite[Theorem 5.6]{hochstersolid} we have that $f \in I^\star$ implies $f \in (IS)^\star$. But by Theorem \ref{mainprimary} the containment $f \in (IS)^\star$ yields that $f \in (IS)^{\dagger \text{GR}}$ and then $f \in  I^{\dagger \text{GR}}$.
For the converse suppose that $f \in I^{\dagger \text{GR}} \subseteq (IS)^{\dagger \text{GR}}$. Hence, we have $f \in (IS)^\star$ by Theorem \ref{mainprimary}. Since $R \subseteq S$ is finite Proposition \ref{hochster2} implies $f \in I^\star$.
\end{proof}

\begin{Theo}
\label{Mainresult}
Let $R$ denote an $\mathbb{N}$-graded two-dimensional domain of finite type over a field $R_0$ and $I$ a homogeneous ideal of $R$. Then $I^{\dagger \text{GR}} = I^\star$. 
\end{Theo}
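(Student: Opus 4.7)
The plan is to reduce to Theorem~\ref{TRed}, which handles the case of algebraically closed $R_0$, via base change to the algebraic closure $\bar{k}$ of $k := R_0$. Set $\bar{R} := R \otimes_k \bar{k}$, a faithfully flat $\mathbb{N}$-graded extension of $R$ of the same dimension. Because $R$ is a finitely generated $k$-domain, the relative algebraic closure of $k$ inside $\operatorname{Frac}(R)$ is a finite extension $k'/k$, and consequently $\bar{R}$ will have only finitely many minimal primes $\mathfrak{p}_1, \ldots, \mathfrak{p}_n$, permuted transitively by the natural Galois action of $G := \operatorname{Gal}(\bar{k}/k)$ on $\bar{R}$ (this action fixes $R$). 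Faithful flatness lets me pick $\mathfrak{p} := \mathfrak{p}_1$ with $\mathfrak{p} \cap R = 0$; then $A := \bar{R}/\mathfrak{p}$ is a two-dimensional $\mathbb{N}$-graded domain of finite type over $\bar{k}$, with $R \hookrightarrow A$ an integral graded extension. Write $A_j := \bar{R}/\mathfrak{p}_j$.

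The first step is to match the graded dagger closures. The integral graded extension $R \subseteq A$ of domains yields $R^+ = A^+$ (a common algebraic closure of a common fraction field), and the maximality built into the construction of the graded absolute integral closure forces $R^{+ \text{GR}} = A^{+ \text{GR}}$: every homogeneous element of $A$ is integral over $R$ of nonnegative degree and thus lies in $R^{+ \text{GR}}$, while conversely $A^{+ \text{GR}}$ is visibly a $\mathbb{Q}_{\geq 0}$-graded subring of $R^+$ extending the grading of $R$. Hence for $f \in R$, $f \in I^{\dagger \text{GR}}$ iff $f \in (IA)^{\dagger \text{GR}}$. Theorem~\ref{TRed} applied to $(A, IA)$ (whose base field $\bar{k}$ is algebraically closed) then gives $(IA)^{\dagger \text{GR}} = (IA)^\star$, so $f \in I^{\dagger \text{GR}}$ iff $f \in (IA)^\star$.

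It remains to match the solid closures, i.e.\ to show $I^\star = (IA)^\star \cap R$. The inclusion $\subseteq$ is immediate from persistence of solid closure (\cite[Theorem 5.6]{hochstersolid}). For $\supseteq$, suppose $f \in (IA)^\star$: Galois transitivity together with the $G$-invariance of $f \in R$ forces $f \in (IA_j)^\star$ for every $j$. Any complete local domain testing $(I\bar{R})^\star$ has the form $\widehat{\bar{R}_{\mathfrak{m}}}/\mathfrak{q}$ for a maximal ideal $\mathfrak{m}$ of $\bar{R}$ and a minimal prime $\mathfrak{q}$ of the completion; since $\mathfrak{q}$ contains some $\mathfrak{p}_j$, this test coincides with one for $(IA_j)^\star$. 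Hence $f \in (I\bar{R})^\star$, and faithful flatness of $R \to \bar{R}$ combined with \cite[Theorem 5.9]{hochstersolid} (see Remark~\ref{solidhomogeneouscharazeroremark}) yields $f \in I^\star$. Combined with the dagger closure analysis, this gives $I^\star = I^{\dagger \text{GR}}$.

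The hard part will be the solid closure descent $(IA)^\star \cap R \subseteq I^\star$: because $R \to A$ is integral but generally of infinite type (when $\bar{k}/k$ is infinite), Proposition~\ref{hochster2} does not apply directly, which is why the argument must route through $\bar{R}$ and exploit Galois symmetry together with Hochster's faithfully flat descent. A minor further subtlety is that $\bar{R}$ may fail to be reduced when $k$ is imperfect, but since solid closure depends only on the minimal primes of completions at maximal ideals of $\bar{R}$, nilpotents are irrelevant to the test.
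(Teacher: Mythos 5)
Your proof is correct, but it takes a route that is genuinely different from the paper's. The paper first reduces to $R$ normal (normalization is finite, so both closures descend via Proposition~\ref{hochster2} and the equality of graded absolute integral closures), and then argues that normality forces $R$ to be geometrically integral, so that $R_k = R \otimes_{R_0} \bar{k}$ is itself a two-dimensional graded domain of finite type over $\bar{k}$ to which Theorem~\ref{TRed} applies directly; the comparison of solid closures is then just persistence in one direction and Hochster's faithfully flat descent in the other. You skip the normality reduction and instead confront the possibly non-integral (even non-reduced) $\bar{R}$ head-on: you pass to an irreducible component $A = \bar{R}/\mathfrak{p}$, identify $R^{+\text{GR}}$ with $A^{+\text{GR}}$ via the maximality built into Hochster--Huneke's construction (which correctly transfers graded dagger closure), and then -- this is the extra work your route requires -- exploit the transitive $\operatorname{Gal}(\bar{k}/k)$-action on the minimal primes $\mathfrak{p}_j$ together with the observation that every complete-local-domain test for $(I\bar{R})^\star$ factors through some $A_j$, so that $f \in (IA)^\star$ promotes to $f \in (I\bar{R})^\star$, after which faithfully flat descent applies as in the paper. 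Your approach is slightly longer because of the Galois bookkeeping, but it has the merit of never needing $\bar{R}$ to be a domain or reduced, thereby sidestepping the geometric-integrality step that the paper's reduction to the normal case leans on.
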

\begin{proof}
We may assume that $R$ is normal. Furthermore, $R$ is geometrically integral. Indeed, this is the case if and only if $Q(R) \cap k = R_0$, where $k$ denotes an algebraic closure of $R_0$ by virtue of \cite[Corollary 3.2.14 (c)]{liualgebraicgeometry}. And elements of $Q(R) \cap k$ are integral over $R_0$ and hence contained in $R$ since $R$ is normal. Moreover, as any such nonzero element is a unit it is necessarily contained in $R_0$.

Thus, we may identify $R_k = R \otimes_{R_0} k$ with $R[\alpha \, \vert \, \alpha \in \overline{Q(R)} \text{ is algebraic over } R_0]$, where $\overline{Q(R)}$ is an algebraic closure of the field of fractions of $R$.
Let $f \in I^{\dagger \text{GR}}$. It follows that $f \in (IR_k)^{\dagger \text{GR}} = (IR_k)^\star$ by Theorem \ref{TRed}. Since $R_k$ is faithfully flat over $R$ we have by \cite[Theorem 5.9 (a)]{hochstersolid} that $f \in I^\star$.

For the converse assume that $f \in I^\star$. The persistence of solid closure \cite[Theorem 5.6]{hochstersolid} implies that $f \in (IR_k)^\star$. And the latter is equal to $(IR_k)^{\dagger \text{GR}}$ again by Theorem \ref{TRed}. Therefore, we immediately have $f \in I^{\dagger \text{GR}}$.
\end{proof}

\bibliography{bibliothek.bib}
\bibliographystyle{amsplain}

\end{document}